\documentclass[preprint]{elsarticle}
\usepackage{lineno,hyperref}
\modulolinenumbers[5]

\usepackage{url}










\bibliographystyle{elsarticle-num}

\usepackage{amsmath,amsthm,amsfonts,amssymb}
\usepackage[ruled,vlined]{algorithm2e}
\usepackage{xcolor}
\usepackage{enumerate}
\usepackage{csquotes}

\usepackage{graphicx}
\graphicspath{ {Figures/} }
\usepackage{subcaption}

\usepackage{multicol}
\usepackage{multirow}

\usepackage{calc}
\usepackage{tikz}
\usetikzlibrary{shapes, decorations.markings, calc}

\newtheorem{Theorem}{Theorem}
\newtheorem{Proposition}[Theorem]{Proposition}

\newtheorem{Lemma}[Theorem]{Lemma}
\newdefinition{Definition}{Definition}
\newdefinition{Assumption}{Assumption}
\newdefinition{Remark}{Remark}
\newdefinition{Example}{Example}

\def \ba{\begin{array}}
\def \ea{\end{array}}

\def \bea{\begin{eqnarray}}
\def \eea{\end{eqnarray}}

\def \be{\begin{equation}}
\def \ee{\end{equation}}

\def \BEA{\begin{eqnarray*}}
\def \EEA{\end{eqnarray*}}

\def \BE{\begin{equation*}}
\def \EE{\end{equation*}}

\def \+{\dagger}

\def \bb{\mathbb}

\def \mc{\mathcal}

\def \disp{\displaystyle}

\def \tcol{\textcolor}

\def \S{\mathtt{S}}
\def \I{\mathtt{I}}
\def \D{\mathtt{D}}
\def \R{\mathtt{R}}

\def \U{\mathtt{U}}
\def \T{\mathtt{T}}

\definecolor{blue1}{RGB}{128, 191, 255}
\definecolor{redd1}{RGB}{255, 153, 128}
\definecolor{yelw1}{RGB}{255, 179, 26}
\definecolor{prpl1}{RGB}{204, 153, 255}
\definecolor{gren1}{RGB}{153, 230, 153}
\definecolor{blue2}{RGB}{51   153  255}
\definecolor{redd2}{RGB}{255  71   26}
\definecolor{yelw2}{RGB}{255  204  0}
\definecolor{prpl2}{RGB}{153  51   255}
\definecolor{gren2}{RGB}{51   204  51}
\definecolor{mygreen}{rgb}{0,0.7,0}

\def \ol{\overline}

\begin{document}

\begin{frontmatter}
\title{Modeling and Control of Epidemics through Testing Policies\tnoteref{titlenote}}
\tnotetext[titlenote]{This work is supported by European Research Council (ERC) under the European Union’s Horizon 2020 research and innovation programme (ERCAdG no. 694209, Scale-FreeBack, website: \url{http://scale-freeback.eu/}) and by Inria, France, in the framework of the Inria's Mission COVID-19.}

\author[mainaddress1]{Muhammad Umar B. Niazi\corref{correspondingauthor}}
\ead{muhammad-umar-b.niazi@inria.fr}
\cortext[correspondingauthor]{Corresponding author}

\author[mainaddress1]{Alain Kibangou}
\ead{alain.kibangou@univ-grenoble-alpes.fr}

\author[mainaddress1]{Carlos Canudas-de-Wit}
\ead{carlos.canudas-de-wit@gipsa-lab.fr}

\author[mainaddress1]{Denis Nikitin}
\ead{denis.nikitin@gipsa-lab.fr}

\author[mainaddress1]{Liudmila Tumash}
\ead{liudmila.tumash@gipsa-lab.fr}

\author[mainaddress2]{Pierre-Alexandre Bliman}
\ead{pierre-alexandre.bliman@inria.fr}

\address[mainaddress1]{Univ. Grenoble Alpes, CNRS, Inria, Grenoble INP, GIPSA-Lab, 38000 Grenoble, France.}
\address[mainaddress2]{Sorbonne Universit\'{e}, Universit\'{e} Paris-Diderot SPC, Inria, CNRS, Laboratoire Jacques-Louis Lions, \'{e}quipe Mamba, 75005 Paris, France.}

\begin{abstract}
Testing is a crucial control mechanism in the beginning phase of an epidemic when the vaccines are not yet available. It enables the public health authority to detect and isolate the infected cases from the population, thereby limiting the disease transmission to susceptible people. However, despite the significance of testing in epidemic control, the recent literature on the subject lacks a control-theoretic perspective. In this paper, an epidemic model is proposed that incorporates the testing rate as a control input and differentiates the undetected infected from the detected infected cases, who are assumed to be removed from the disease spreading process in the population. After estimating the model on the data corresponding to the beginning phase of COVID-19 in France, two testing policies are proposed: the so-called best-effort strategy for testing (BEST) and constant optimal strategy for testing (COST). The BEST policy is a suppression strategy that provides a minimum testing rate that stops the growth of the epidemic when implemented. The COST policy, on the other hand, is a mitigation strategy that provides an optimal value of testing rate minimizing the peak value of the infected population when the total stockpile of tests is limited. Both testing policies are evaluated by their impact on the number of active intensive care unit (ICU) cases and the cumulative number of deaths for the COVID-19 case of France.
\end{abstract}

\begin{keyword}
Epidemic modeling \sep Testing \sep Model estimation \sep Control policies.
\end{keyword}

\end{frontmatter}

\section{Introduction}

The history of humanity is enameled with various pandemics whose consequences have had a durable impact on our societies. In almost all cases, vaccination is presented as a panacea, however, before vaccination becomes possible, an initial response, as effective as possible, must be provided. This is what the COVID-19 epidemic, started in Wuhan, China, at the end of 2019, has taught us. COVID-19 was declared to be a pandemic by the World Health Organization (WHO) on March 11, 2020. The most common symptoms of the disease include fever, cough, fatigue, shortness of breath, and loss of the senses of smell and taste, where complications may include pneumonia and respiratory distress known as a severe acute respiratory syndrome (SARS). For more than a year, the primary mode of treatment had been symptomatic and supportive therapy \cite{cao2020,baden2020}, and no approved vaccine or specific antiviral treatment was available.

The pandemic shook the economy of the whole world with a significant reduction of exports, a decline in tourism, mass unemployment, and business closures \cite{loayza2020}. 
Governments and health authorities worldwide responded by implementing non-pharmaceutical intervention (NPI) policies such as travel restrictions, lockdown measures, social distancing, workplace hazard controls, closure of schools and workplaces, curfew strategies, and cancellation of public events. Many countries also upgraded existing infrastructure and personnel to increase testing capabilities and facilities for focused isolation. The public was instructed to wash hands several times a day, cover mouth and nose when coughing or sneezing, maintain a certain physical distance from other people, wear a face mask in public places/gatherings, and to monitor and self-isolate if the disease symptoms appear.
The extent to which such policies and measures have been implemented is called the stringency index of a country's response to the epidemic \cite{hale2020,hale2020b}. Each country responded in its capacity to find a suitable balance between saving lives and saving livelihoods, which \cite{glover2020} termed as a problem of health versus wealth.
Livelihoods can be saved through the implementation of suitable relief and recovery measures for people and small businesses. On the other hand, lives can be saved through the implementation of testing policies and NPIs. In other words, there is a direct relation between the stringency index of the government and `saving lives'.

All the above strategies and policies are considered to be the control mechanisms for the epidemic. Such strategies fall under two categories: mitigation and suppression \cite{ferguson2020,walker2020}. The mitigation strategies slow down the rate of transmission of disease or `flatten the curve'. However, they do not necessarily stop the spreading of the disease, which is the goal of suppression strategies. Given the required objectives (for e.g., minimizing the number of deaths caused by the epidemic) and constraints (for e.g., socio-economic costs), the problem of finding optimal strategies for epidemic control has been recently studied under the framework of optimal control theory.

\subsection{Literature review}
To understand, predict, and control the evolution of the COVID-19 epidemic, a huge effort has been devoted by the researchers to design models as accurate and as effective as possible. Each model, by and large, is a variant and/or an extension of SIR (susceptible, infected, recovered) and SEIR (susceptible, exposed, infected, recovered) models, which describe the flow of population through three or four mutually exclusive stages of infection, respectively (see \cite{kermack1927} and \cite{hethcote2000} for a comprehensive review). These basic models have few parameters that are easy to identify \cite{massonis2020}, and are considered as population models that view the epidemic from the macroscopic perspective. This is in contrast with the approaches that capture heterogeneity of population structure such as network epidemic models \cite{khanafer2016,pare2018,pare2020} or metapopulation epidemic models \cite{colizza2008,pastor2015,della2020}, that view the epidemic from the microscopic perspective. In what follows, however, we study the epidemic through the macroscopic perspective of population models.

Following the outbreak of COVID-19, there has been an effort to produce comprehensive population models with a focus on different facets of the epidemic. Such models are more complex than simple SIR and SEIR, and include several intermediate stages that accurately portray the dynamics of the epidemic. For instance, \cite{lin2020} develops an extension of the SEIR model that incorporates the governmental actions (e.g., preventive measures and restrictions) and the individual behavioral reactions, whereas \cite{anastassopoulou2020} develops an extension of the SIR model that incorporates the number of deaths due to the epidemic. Another quite interesting model is the one developed in \cite{giordano2020} that considers an eight-compartment model called SIDARTHE, which includes eight stages of infection: susceptible (S), infected (I), diagnosed (D), ailing (A), recognized (R), threatened (T), healed (H), and extinct (E). A distinguishing feature of this model is that it differentiates between the infected individuals based on the severity of their symptoms and whether they are diagnosed by a health authority. It is crucial, as also emphasized in \cite{liu2020,ducrot2020}, to differentiate between diagnosed and undiagnosed individuals because the former are typically isolated and are less likely to spread the infection. Similar models have been adopted and extended to study optimal control policies for the epidemic such as the implementation of social distancing measures \cite{kohler2020,morato2020,perkins2020}, lockdown strategies \cite{casella2020,olivier2020,alvarez2020}, and heterogeneous policy responses based on age-groups \cite{acemoglu2020,brotherhood2020}.

In addition to the above NPI strategies, testing and isolating the infected population from the susceptible population is one of the most important strategies to control the epidemic spread. For instance, it has been reported that COVID-19 was eliminated from the Italian village Vo’Euganeo through testing both symptomatic and asymptomatic cases \cite{romagnani2020,day2020}. Moreover, on his media briefing\footnote{Website: \href{https://www.who.int/dg/speeches/detail/}{WHO Director-General Speech of March 16, 2020.} (Accessed 04/06/2020)} of March 16, 2020, Dr. Tedros Adhanom Ghebreyesus, the Director-General of WHO, urged the following: 

\begin{displayquote}
{\em ``Social distancing measures can help to reduce transmission and enable health systems to cope. Hand-washing and coughing into your elbow can reduce the risk for yourself and others. But on their own, they are not enough to extinguish this pandemic. It’s the combination that makes the difference. As I keep saying, all countries must take a comprehensive approach. But the most effective way to prevent infections and save lives is breaking the chains of transmission. And to do that, you must test and isolate. You cannot fight a fire blindfolded. And we cannot stop this pandemic if we don’t know who is infected. We have a simple message for all countries: TEST, TEST, TEST.''}
\end{displayquote}

COVID-19 can be detected through two types of tests known as type-1 (RT-PCR) and type-2 (serology). In the type-1 test, a swab is inserted into the subject's nose to qualitatively detect nucleic acid from SARS-CoV-2 in the upper and lower respiratory specimens \cite{FDA1}, which enables one to detect whether the subject is currently infected with COVID-19.
Type-2 test, on the other hand, is a serum test that detects relevant antibodies enabling one to know whether the subject was infected in the past with COVID-19. Both types of tests are important in the control of an epidemic. Type-1 tests help to limit the disease spread by the identification of infected individuals and their contact tracing \cite{walque2020}. Type-2 tests, on the other hand, are useful in reducing the size of the testable population for type-1 tests \cite{winter2020} that helps to increase the testing specificity. However, the type-1 test was considered to be the only recommended method for the identification and laboratory confirmation of COVID-19 cases according to the WHO \cite{WHO1}. Moreover, only type-1 tests can provide information in real-time related to describe the outburst of the epidemic, which is the reason that the datasets related to testing only include type-1 tests\footnote{Website: \href{https://ourworldindata.org/coronavirus-testing\#different-types-of-tests-for-covid-19}{Our World in Data: Coronavirus (COVID-19) Testing}. (Accessed 30/09/2020)}.

Following the recommendation of the WHO director, with different levels of setups, many countries increased their testing capacities, while others feared the economic burden of intensive testing policy. However, \cite{eichenbaum2020,salathe2020} show that such a burden is only short-term and, on the contrary, intensive testing reduces the `overall' cost of the epidemic in the long run because it enables the government to gain rapid control of the epidemic and revive the economy of a country.
Testing enables the health authority to identify and isolate the infected people from the susceptible population, which limits the transmission of the disease. Therefore, testing is considered to be a crucial control mechanism for the epidemic \cite{chowell2003}. However, few attempts have been dedicated to study the testing policy for an epidemic from a control-theoretic perspective. 

In somewhat similar to a resource allocation problem \cite{nowzari2016,nowzari2017} in epidemic control, \cite{pezzutto2020} poses the optimal test allocation as a well-known sensor selection problem in control theory, whereas \cite{ely2020} poses it as a welfare maximization problem by considering specificity and sensitivity of tests. The main assumption in these papers, however, is the availability of information portfolios of all individuals in a society, which enables the decision makers to compute the infection probability of individuals and utility loss for each individual in case of decision errors. On the other hand, \cite{piguillem2020,berger2020} study the problem of testing policy from an economic perspective, where the goal is to find an optimal testing policy that minimizes the total number of quarantined people to incur minimal cost on the economic activity of a country while also mitigating the epidemic spread. Without such a policy, health authorities usually resort to indiscriminate quarantining of people that burdens the economy of a country without any reason. Therefore, testing allows to identify and isolate the positive cases for case-dependent quarantining. Another aspect of testing policy is studied in \cite{charpentier2020}, which computes an optimal trade-off between testing effort and lockdown intervention under the constraint of limited Intensive Care Units (ICU).

\subsection{Our contribution}
At the onset of an epidemic, the effective treatments or vaccines are usually not available. In such a case, it is important to devise effective testing policies for epidemic control.
We introduce a modified SIR model named SIDUR --- susceptible (S), undiagnosed infected (I), diagnosed infected (D), unidentified recovered (U), and identified removed (R) --- to study epidemic control through testing. Similar to \cite{giordano2020, liu2020, ducrot2020}, we differentiate between the undiagnosed and diagnosed infected population. We assume that the diagnosed infected population are either quarantined and/or hospitalised and only the undiagnosed infected population is responsible for the disease transmission to the susceptible population. The identified removed population consists of people who recover or die after being diagnosed and the unidentified recovered population consists of people who recover without getting diagnosed. The control input in the SIDUR model is the testing rate defined as the number of tests performed per day, where the influence of the control is directly linked with the testing specificity. The testing specificity determines the probability of detecting an undiagnosed infected person through a test, which, for instance, can be increased through efficient contact tracing.

First, we estimate the parameters of SIDUR model using the COVID-19 data of France. Then, we propose two testing policies for epidemic control: 1) Best effort strategy for testing (BEST) and 2) Constant optimal strategy for testing (COST).

The best-effort strategy for testing (BEST) is a suppression strategy for an epidemic that provides the minimum number of tests to be performed per day in order to stop the epidemic spread. Thus, BEST is meaningful only during the spreading phase of the disease. We provide an algorithm to compute the number of tests required by BEST policy. Since BEST is a suppression strategy that stops the epidemic growth immediately, it usually requires a large number of tests to be performed per day. However, if implemented earlier, BEST requires feasible number of tests. We illustrate this for the COVID-19 case of France by plotting the number of tests required by BEST with respect to time.

The constant optimal strategy for testing (COST) is a mitigation strategy when the total stockpile of tests is limited. It provides the optimal number of tests per day that must be allocated in a certain time interval from the onset of the epidemic in order to minimize the peak of infected population. When the stockpile finishes at the terminal time, then the tests are not performed anymore. The main idea of COST is the consideration of two peaks of the epidemic; one that occurs before the stockpile of tests finishes and one that occurs after the stockpile finishes. If the stockpile of tests is allocated constantly such that it reduces the first peak, it will however result in the increase of second peak, and vice versa. Thus, the optimal allocation of tests per day is the one that minimizes both peaks of the infected population, which occurs when both peaks are equal. Both BEST and COST policies are compared with the actual COVID-19 testing scenario of France through the prediction of the number of active ICU cases and the cumulative number of deaths.

\subsection{Paper organization}
In Section~\ref{sec:model}, we describe the model, inputs, and outputs, and compute the basic and effective reproduction numbers of the model. Section~\ref{sec:data} illustrates the French COVID-19 data and provides the data imputations to infer the missing data from the raw data. Then, Section~\ref{sec:estimation} provides the estimation and fitting of model for the French COVID-19 case. Finally, in Section~\ref{sec:control}, we propose two testing policies BEST and COST, and evaluate them by comparing the predicted number of active ICU cases and the predicted cumulative number of deaths with the actual data.

\section{SIDUR model with testing policy} \label{sec:model}

We consider a five-compartment model with the purpose of evaluating and devising testing policies. We assume that testing allows for diagnosing and isolating the infected people from the population to prevent the transmission of the disease to the susceptible population. The proposed model is named SIDUR, which corresponds to the five compartments: susceptible (S), undiagnosed infected (I), diagnosed infected (D), unidentified recovered (U), and identified removed (R). The model is characterized by four parameters and one input, which is the testing rate.

\subsection{Model design}

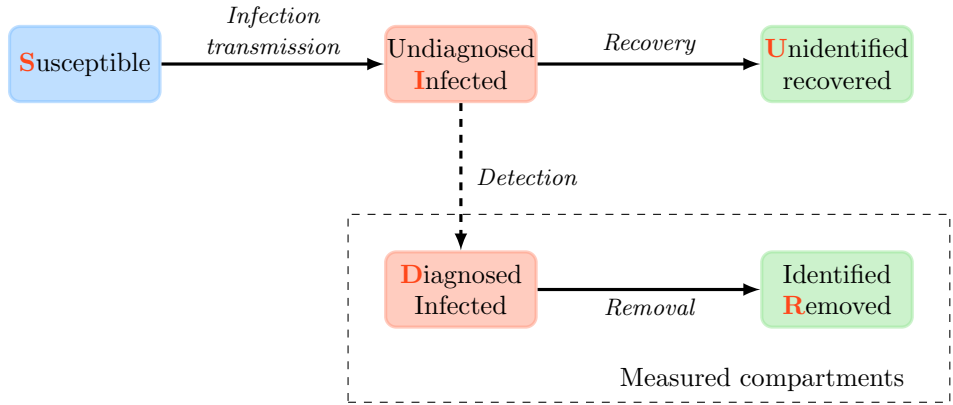
\begin{figure}[!htb]
\begin{center}
\begin{tikzpicture}

    
    \draw[dashed] (3.5,-1) rectangle (11.5,1.5);
    \node at (9,-0.7) {Measured compartments};
    
	\draw[blue1, fill=blue1, very thick, rounded corners, opacity = 0.5] (-1,3) rectangle (1,4);
	\node at (0,3.5) {\tcol{redd2}{\bf S}usceptible};

	\draw[-latex, very thick] (1.02,3.5) -- (3.98,3.5);
	\node[text width=2.5cm, anchor=south, align=center] at (2.5,3.5) {\small \it Infection transmission};
	
	\draw[redd1, fill=redd1, very thick, rounded corners, opacity = 0.5] (4,3) rectangle (6,4);
	\node[text width=2cm, align=center] at (5,3.5) {Undiagnosed \tcol{redd2}{\bf I}nfected};
	
	
	\draw[-latex, dashed, very thick] (5,2.98) -- (5,1.02);
	\node[text width=2.5cm, anchor=west, align=center] at (4.5,2) {\small \it Detection};
	
	\draw[redd1, fill=redd1, very thick, rounded corners, opacity = 0.5] (4,0) rectangle (6,1);
	\node[text width=2cm, align=center] at (5,0.5) {\tcol{redd2}{\bf D}iagnosed Infected};

	\draw[-latex, very thick] (6.02,3.5) -- (8.98,3.5);
	\node[text width=2.5cm, anchor=south, align=center] at (7.5,3.5) {\small \it Recovery};
	
	\draw[gren1, fill=gren1, very thick, rounded corners, opacity = 0.5] (9,3) rectangle (11,4);
	\node[text width=2cm, align=center] at (10,3.5) {\tcol{redd2}{\bf U}nidentified recovered};
	
	\draw[-latex, very thick] (6.02,0.5) -- (8.98,0.5);
	\node[text width=2.5cm, anchor=north, align=center] at (7.5,0.5) {\small \it Removal};
	
	\draw[gren1, fill=gren1, very thick, rounded corners, opacity = 0.5] (9,0) rectangle (11,1);
	\node[text width=2cm, align=center] at (10,0.5) {Identified \tcol{redd2}{\bf R}emoved};
\end{tikzpicture}
\caption{Block diagram of SIDUR model.}
\label{fig:SIDUR_block}
\end{center}
\end{figure}

SIDUR is a compartmental model depicted in Figure~\ref{fig:SIDUR_block}.
At time $t\in\bb{R}_{\geq 0}$, each compartment is characterized by a single state:
\begin{itemize}
    \item $x_\S(t)$: Number of susceptible people who are prone to the disease. 
    \item $x_\I(t)$: Number of infected people who are undetected by the public health authority.
    \item $x_\D(t)$: Number of diagnosed people who are infected and detected by a test.
    \item $x_\U(t)$: Number of unidentified recovered people who recover without getting diagnosed.
    \item $x_\R(t)$: Number of removed people who either recover or die after being diagnosed.
\end{itemize}
The development of the model is based on the following three assumptions:

\begin{Assumption} \label{assump:constant_pop}
The population remains constant during the evolution of epidemic:
\[
x_\S(t) + x_\I(t) + x_\D(t) + x_\U(t) + x_\R(t) = N
\]
where $N$ stands for the total population. \hfill $\lrcorner$
\end{Assumption}

During the evolution of the epidemic, the change in population due to births, deaths due to causes exclusive to the epidemic, and inflow/outflow of travelers from/to other countries is assumed to be negligible with respect to the total population. In other words, during the time horizon under consideration, the birth rate is approximately equal to the death rate and the rate of inflow of travelers is approximately equal to the rate of outflow.

\begin{Assumption} \label{assump:undiagnosed_inf}
Only the undiagnosed infected population $x_\I(t)$ is responsible for the disease transmission to the susceptible population $x_\S(t)$. \hfill $\lrcorner$
\end{Assumption}

The diagnosed infected people $x_\D(t)$ are isolated from the population in the form of quarantine or hospitalization. This means that their contact with susceptible people is restricted or, for hospitalized cases, under strict sanitary measures. Therefore, disease transmission due to a diagnosed infected person is assumed to be unlikely. It is possible, however, that a diagnosed infected person could have transmitted the disease to other people before getting diagnosed.

\begin{Assumption} \label{assump:reported_deaths}
All the deaths from COVID-19 are identified and reported; they are included in the removed population $x_\R(t)$ along with the people who recover after being diagnosed. \hfill $\lrcorner$
\end{Assumption}

The non-surviving cases of the disease usually have severe symptoms. Therefore, they are assumed to be diagnosed and hospitalized before their death.

\begin{Assumption} \label{assump:immunity}
The acquired immunity of recovered population is sustainable enough. That is, the unidentified recovered population $x_\U(t)$ and the removed population $x_\R(t)$ cannot get infected again during the time interval considered in an epidemic. \hfill $\lrcorner$
\end{Assumption}

Once the infected cases recover, they develop antibodies that prevent them from getting infected again by the same variant of the disease. However, if the epidemic endures for an extended period of time, then other variants of the disease emerge causing the recovered population to become susceptible again. Nonetheless, we consider a beginning phase of the epidemic during which we assume that disease variants do not emerge and the acquired immunity of recovered population is sustainable enough.

Based on the above assumptions, the model is given as 
\begin{subequations}
	\label{eq401}
	\begin{eqnarray}
	\label{eq401a}
	\dot{x}_\S(t) &=& \disp - \beta \; x_\S(t) \frac{x_\I(t)}{N} \\ [0.25em]
	\label{eq401b}
	\dot{x}_\I(t) &=& \disp \beta \; x_\S(t) \frac{x_\I(t)}{N} - u(t) \frac{x_\I(t)}{x_\T(t)} -\gamma x_\I(t) \\ 
	\label{eq401c}
	\dot{x}_\D(t) &=& u(t) \frac{x_\I(t)}{x_\T(t)} - \rho x_\D(t) \\ [0.5em]
	\label{eq401d}
	\dot{x}_\U(t) &=&\gamma x_\I(t)\\ [1em]
	\label{eq401e}
	\dot{x}_\R(t) &=&\rho x_\D(t)
	\end{eqnarray}
\end{subequations}
where $\beta$, $\gamma$, and $\rho$ are the infection,  recovery, and removal rates, respectively, $u(t)$ is the testing rate,
\be \label{eq:testable}
\ba{ccl}
x_\T(t) &=& x_\I(t) + (1-\theta) \left(x_\S(t) + x_\U(t)\right) \\
&=& \theta x_\I(t) + (1-\theta) \left(N- x_\D(t) - x_\R(t)\right)
\ea
\ee
is the testable population, and $\theta$ is the testing specificity parameter which takes values in the interval $[0,1]$.

The recovery rate $\gamma$ is the inverse of the average recovery time $1/\gamma$ after which an undiagnosed infected person recovers, and the removal rate $\rho$ is the inverse of the average removal time $1/\rho$ after which a diagnosed infected person recovers or dies. The average recovery time is expected to be shorter than the average removal time, i.e., $\gamma \geq \rho$, because the undiagnosed infected population that comprises the undetected asymptomatic cases and cases with mild symptoms recover faster than the diagnosed population that comprises mostly the cases with severe symptoms.

The infection rate $\beta$ is the product of the frequency of contacts among the susceptible and infected populations and the probability of disease transmission after a contact has been made. Thus, the parameters $\beta$, $\gamma$, and $\rho$  are related to the disease biology. However, the value of $\beta$ can also be partially impacted by non-pharmaceutical interventions (NPI) such as social distancing, lockdown, confinement, travel restrictions, and preventive policies (i.e., to maintain a certain distance from other people, to wear a face mask in public spheres, to wash/sanitize hands more often, etc.). The value of $\beta$ is expected to be smaller when NPI's are implemented than the value of $\beta$ when no NPI is implemented. Depending on the time periods during which different NPI policies are implemented, we assume the infection rate $\beta$ to be piecewise constant.

The testing specificity parameter $\theta$, on the other hand, is solely dependent on the testing policy implemented by the public health authority. Given that the testing rate is constant, the value of $\theta$ will be larger when the tests are allocated efficiently through contact tracing than the value of $\theta$ when the tests are performed randomly. However, there are other factors that can also influence $\theta$, for example, if only the people with severe symptoms are tested, then the probability $x_\I/x_\T$ of detecting an infected person from the testable population is equal to one, i.e., the testing specificity parameter $\theta=1$. This is to indicate that the larger value of $\theta$ doesn't necessarily imply the efficiency of testing policy, rather it only signifies the specificity of tests. Depending on the time periods during which different testing policies are implemented, we assume the testing specificity parameter $\theta$ to be piecewise constant.

\subsection{Control input and testable population}

We consider the testing rate $u(t)$ to be the model input which depends on three factors: the daily testing capacity $c(t)$, the remaining stockpile of tests $r(t)$, and the testable population $x_\T(t)$. We consider $c(t)$ to be time-varying in order to take into account the fact that the capacity of testing can change on a daily basis. Moreover, on a given day $t$, one cannot do more tests than the daily testing capacity $c(t)$, the remaining stockpile of tests $r(t)$, or the testable population $x_\T(t)$. Therefore, we have
\be \label{eq:testing_rate}
u(t) := \min\left( c(t), r(t), x_\T(t) \right).
\ee
In case the total stockpile of tests $r_{\max}$ is limited, the remaining stockpile of tests at time $t$ is given by
\[
r(t) := r_{\max} - \int_0^t u(\eta) d\eta.
\]
In case new tests can be produced and supplied easily, \eqref{eq:testing_rate} is simply given by 
\[
u(t) := \min\left( c(t), x_\T(t) \right).
\]
Usually, the testable population $x_\T(t)$ is much larger than the daily testing capacity, thus
\be \label{eq:testing_rate2}
u(t) = \left\{\ba{ll}
c(t), & \text{if}~r_{\max}~\text{is unlimited} \\
\min(c(t),r(t)), & \text{if}~r_{\max}~\text{if limited}.
\ea\right.
\ee

In order to diagnose the infected people at time $t$, the tests are allocated to a proportion of the testable population $x_\T(t)$, which is a sample from the total population $N$. From \eqref{eq:testable}, it is obvious that the infected population $x_\I(t)\leq x_\T(t)$ at any given time $t$. Thus, given the testing specificity parameter $\theta\in[0,1]$, the probability of detecting an infected person per test in a homogeneous population structure is given by $x_\I(t)/x_\T(t)$. 

The testing specificity parameter $\theta$ allows for the adjustment of the testable population to accommodate for the detection rate of tests. In most countries, at the beginning of an epidemic outbreak, the number of available tests are limited. Thus, the available tests are usually utilized to confirm the symptomatic infected cases or to diagnose certain people such as medical care agents, politicians, athletes, etc. In such a case, the testable population is close to the infected population and the value of $\theta$ increases to approximately one. 
Once the capacity of testing is increased, the size of the testable population is also increased that can include, for example, contacts of diagnosed people, the whole population of a city where a cluster is identified, travellers, etc. As a consequence, the value of $\theta$ decreases. 

\subsection{Outflows from the model compartments}

The SIDUR model is described by the one-way transfer of population between compartments, where an outflow from one compartment is the inflow to the other compartments. Thus, it suffices to describe only the outflows from the compartments to describe the dynamics of the model.

\paragraph{Infection transmission}
In the beginning of the epidemic, most of the population is in the susceptible compartment (S) with the exclusion of those who are initially infected and/or diagnosed. Some of the susceptible people in S may get infected and leave this compartment when they come in contact with an infected person. The rate of the outflow from this compartment is according to the infection transmission rate, which depends on the product of the number of susceptible and infected populations, and is given as
\[
\beta x_\S(t) \frac{x_\I(t)}{N}
\]
where $\beta$ is the infection rate. The term $x_\I(t)/N$ is the proportion of undetected infected population at any time $t$ in a homogeneous population structure. Note that in light of Assumption~\ref{assump:undiagnosed_inf}, diagnosed population $x_\D(t)$ does not participate in the infection transmission because they are either quarantined and/or hospitalized, i.e., they are temporarily removed from the population. Finally, by Assumption~\ref{assump:immunity}, there is no inflow to the susceptible compartment.

\paragraph{Detection}
The outflow from the infected compartment (I) is either due to detection (i.e., transfer to the diagnosed compartment (D)) or recovery without detection (i.e., transfer to the unidentified recovered compartment (U)). The first outflow is due to the testing rate $u(t)$, i.e., the number of tests performed per day. Since the probability of detecting an infected person from a testable population by a single test is $x_\I(t) / x_\T(t)$, therefore we have
\[
u(t) \frac{x_\I(t)}{x_\T(t)}
\]
the rate of diagnosing the infected population in I compartment.

\paragraph{Recovery}
The second outflow from the I compartment consists of those people who are not diagnosed and recover naturally with an average recovery period of $1/\gamma$. The unidentified recovered compartment (U) accumulates the infected people who recover naturally without being detected with a recovery rate $\gamma$.

\paragraph{Removal}
The diagnosed compartment (D) admits $u(t) x_\I(t)/x_\T(t)$ as an inflow, whereas the outflow is $\rho x_\D(t)$ with $\rho$ being the removal rate. That is, $1/\rho$ is the average time period after which a typical diagnosed person either recovers or dies. The removed compartment (R) accumulates the diagnosed people who die or recover with a removal rate $\rho$.

\subsection{Output signals from the model}
The outputs signals $y_i(t)$, $t\in\bb{R}_{\geq 0}$, $i={1,2,\dots,m}$, from the model (or model outputs) correspond to the sampled output measurements $\ol{y}_i(k)$, $k\in\bb{Z}_{\geq 0}$, from the data (or data outputs), which can be approximated by a continuous signals $\ol{y}_i(t)$, respectively. The data outputs and model outputs are related as follows:
\[
\ol{y}_i(t) = y_i(t) + w_i(t)
\]
where $w_i(t)$ is the measurement noise/error. 

The model outputs are functions of the states of SIDUR model, which are of two types: (i) outputs whose functions are known and (ii) outputs whose functions are not known.
First, we define three model outputs whose function is known. 
\begin{itemize}
    \item Cumulative number of diagnosed people
    \begin{equation}\label{total_infected_cases}
        y_1(t)= x_\D(t) + x_\R(t).
    \end{equation}
    \item Cumulative number of removed people
    \begin{equation}\label{total_recovered_cases}
        y_2(t)= x_\R(t).
    \end{equation}
    \item Number of positively tested people (or positive test results) per day
    \begin{equation}\label{testedpos_cases}
        y_3(t)= u(t) \frac{x_\I(t)}{x_\T(t)}.
    \end{equation}
\end{itemize}
These model outputs are fitted with the data outputs in order to estimate the model parameters $\beta,\theta,\gamma,\rho$ in Section~\ref{sec:estimation}. Note that these model outputs are related to each other. Since the number of diagnosed infected people at any time $t$ can be obtained as $x_\D(t) = y_1(t) - y_2(t)$, which is also known as the number of active diagnosed cases, we obtain the following relation between $y_1(t)$ and $y_2(t)$ from \eqref{eq401e}
\be \label{y2y1_relation}
\dot{y}_2(t) = \rho (y_1(t) - y_2(t)).
\ee
On the other hand, the number of positive test results per day $y_3(t)$ is related to the cumulative number of diagnosed cases $y_1(t)$ by the following relation
\be \label{output_relation}
y_3(t) = \dot{x}_\D(t) + \dot{x}_\R(t) = \dot{y}_1(t).
\ee
The cumulative number of diagnosed people $y_1(t)$ can be obtained by integrating the daily number of positive test results as
\be \label{output_relation2}
y_1(t)-y_1(0) = \int_0^t y_3(\eta) d\eta.
\ee
These output relations \eqref{y2y1_relation}, \eqref{output_relation}, \eqref{output_relation2} are used to infer the missing data from the available data in Section~\ref{sec:data}. 

Second, one can note that the number of active ICU patients, denoted $B(t)$, (or ICU beds occupied) and the cumulative number of deaths, denoted $E(t)$, (or extinct cases) are positively correlated with the total number of active infected cases $A(t)=x_\I(t) + x_\D(t)$ and the cumulative number of infected cases $I(t)=N-x_\S(t)$, respectively. Therefore, we define two additional model outputs:
\begin{itemize}
    \item Number of active Intensive Care Unit (ICU) cases (or the number of ICU beds currently occupied by COVID-19 patients):
    \be \label{activeICU_cases}
    y_4(t) := B(t) = g(A(t-\psi^{-1}))
    \ee
    where $A(t) = x_\I(t)+x_\D(t)$ is the number of active infected cases, $\psi^{-1}$ is the average time period a typical COVID-19 ICU case takes from getting infected to being admitted to ICU, and the function $g$ is to be defined.
    \item Cumulative number of deaths due to COVID-19 (or extinct cases):
    \be \label{total_dead_cases}
    y_5(t) := E(t) = h(I(t-\phi^{-1}))
    \ee
    where $I=N-x_\S(t)$ is the cumulative number of infected cases, $\phi^{-1}$ is the average time period a typical COVID-19 extinct case takes from getting infected to death, and the function $h$ is to be defined.
\end{itemize}
The functions $g$ and $h$ will be estimated from available data on the number of active ICU cases and the cumulative number of deaths, respectively, in Section~\ref{sec:estimation}. Then, the model outputs $B(t)$ and $E(t)$ will be used as performance outputs to evaluate the testing strategies proposed in Section~\ref{sec:control}.

\subsection{Basic and effective reproduction numbers}

An important quantity to assess the epidemic potential of a disease is the {\em basic reproduction number} $R_0$, which is defined as the expected number of secondary infected cases produced by a single infected person in a completely susceptible population \cite{hethcote2000}. If $R_0>1$, then each generation of infected cases produces more secondary cases in the next generation and the disease has a potential of becoming an epidemic. If $R_0<1$, then each generation of infected cases produces less secondary cases in the next generation and the disease will eventually die out. It is worth noticing, however, that the definition of $R_0$ assumes that the people around a primary infected case are all susceptible. This suggests that determining $R_0$ is important only at the onset of an epidemic. However, in the later stages, more people get infected and not all people around an infected person are necessarily susceptible. As more people get infected, the conditions favoring the disease to propagate change and the number of susceptible people that an infected person infects is actually less than that what $R_0$ predicts. Thus, a more suitable quantity during the later stages of the epidemic is the {\em effective reproduction number} $R_t$, which takes into account the proportion of susceptible people in the total population \cite{rothman2008}.

For the SIDUR model, we define the effective reproduction number $R_t$ to be the ratio of the inflow and the outflow of the undiagnosed infected compartment (I). That is, $R_t$ is the ratio of the number of newly infected people and the number of newly diagnosed and recovered people at time $t$. If $R_t<1$, this means that more people are being diagnosed and recovered than the people being infected at time $t$, which implies that $x_\I(t)$ will decrease. If $R_t>1$, this means that more people are being infected than the people being diagnosed and recovered at time $t$, which implies that $x_\I(t)$ will increase. Notice that the definition of the basic reproduction number $R_0$ is same as $R_t$ for $t=0$.

To derive the expression of $R_t$, we consider the model equation \eqref{eq401b}, where the undiagnosed infected population $x_\I(t)$ satisfies \[
\dot{x}_\I(t) = \beta x_\S(t) \frac{x_\I(t)}{N} - u(t) \frac{x_\I(t)}{x_\T(t)} - \gamma x_\I(t).
\]
The positive rate (inflow) $\beta x_\S(t) x_\I(t) / N$ tells how many new infections will be generated in the next moment, and the negative rates (outflows) $u(t) x_\I(t)/x_\T(t)$ and $\gamma x_\I(t)$ tell how many infected people will be diagnosed or recovered in the next moment, respectively. Therefore, the effective reproduction number is the following ratio
\be \label{eq:eff_R}
R_t = \frac{\beta x_\S(t) \frac{x_\I(t)}{N}}{u(t) \frac{x_\I(t)}{x_\T(t)} + \gamma x_\I(t)} = \frac{\beta}{\frac{u(t)}{x_\T(t)} + \gamma} \frac{x_\S(t)}{N}.
\ee

To derive the expression of $R_0$, we consider the expression of $R_t$ at $t=0$, which corresponds to the onset of the epidemic. For $t=0$, we can assume few infected cases, which implies that $x_\S(0)\approx N$ and $x_\T(0)\approx (1-\theta)N$. Under these approximations, we have
\be \label{eq:basic_R0}
R_0 = \frac{\beta}{\frac{u(0)}{(1-\theta) N} + \gamma}.
\ee
This expression can also be obtained by following the methodology of \cite{driessche2008}.
Notice that the reproduction number $R_0$ depends on the initial testing policy $u(0)$. This indicates that it is possible to suppress the epidemic in the beginning by having an intensive testing policy, which can be seen, for example, in the case of South Korea \cite{oh2020}. In general, however, we have $u(0)\approx 0$, which gives $R_0\approx \beta / \gamma$.

\section{Data acquisition and imputation} \label{sec:data}

The data related to COVID-19 in France is collected from the French government's platform for publicly available data\footnote{Website: \href{https://www.data.gouv.fr/fr/datasets/}{Open platform for French public data.} (Accessed 17/10/2020)} for the time period of January 24 to July 01, 2020. In particular, we use datasets provided by the French Ministry of Social Affairs and Health (Minist\`{e}re des Solidarit\'{e}s et de la Sant\'{e} (MSS)) and the French Public Health Agency (Sant\'{e} Publique France (SPF)). From MSS, we obtain the data about different categories of people affected by \mbox{COVID-19}, i.e., diagnosed, hospitalized, recovered from hospitals, and dead. From SPF, we obtain the data for the number of PCR tests performed and positive test results obtained per day.

The data obtained from both sources is incomplete in several aspects. For instance, the data for the number of recovered people does not record those who recover from their homes after being diagnosed. These people do not show severe symptoms of the disease and, therefore, are not hospitalized, but are quarantined in their homes for some days. Only those who are hospitalized after being diagnosed are recorded as recovered when they are discharged from the hospital. On the other hand, the data for COVID-19 PCR tests is also incomplete. To illustrate this, we consider three intervals of time: (1)~January 24 to March 09, 2020, (2)~March 10 to May 12, 2020, and (3)~May 13 to July 01, 2020. There is no data available for the tests during the first interval. During the second interval, the testing data is collected only from the medical laboratories and not from the hospitals. However, we have reliable data only during the third interval which is collected both from the medical laboratories and the hospitals. 
Therefore, the data obtained from the above sources can be considered as a raw data which needs to be imputed.  


\subsection{Raw data} 

This subsection illustrates the data obtained from MSS and SPF without any modification.

\paragraph{Cumulative number of diagnosed cases}
We denote the data for the cumulative number of diagnosed cases by $\ol{y}_{1}$, which is illustrated in Figure~\ref{fig:raw_diagnosed} and corresponds to the model output $y_1(t)$ in \eqref{total_infected_cases}. It is also known as the total ``confirmed'' cases. This is a cumulative data for all the cases diagnosed with the disease through RT-PCR tests\footnote{Website: \href{https://www.santepubliquefrance.fr/content/download/228073/file/COVID-19_definition_cas_20200403.pdf}{Definition of a COVID-19 confirmed case by SPF.} (Accessed 14/07/2020)}. Thus, it includes both the active cases (those who are either admitted to the hospitals and/or quarantined) and the inactive cases (those who either recovered or died after being diagnosed). That is, $\ol{y}_{1}(k)$ corresponds to the sum of people in the diagnosed (D) and removed (R) compartments of the SIDUR model (Figure~\ref{fig:SIDUR_block}) on a given day $k$, as given in \eqref{total_infected_cases}. 

\begin{figure}[!htb]
    \centering
    \includegraphics[width=0.9\textwidth]{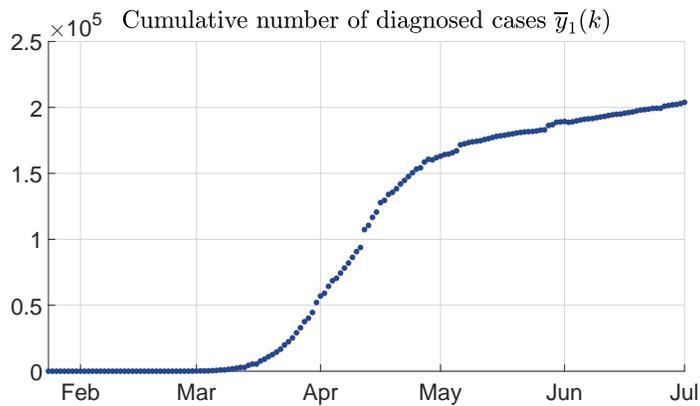}
	\caption{Cumulative number of diagnosed cases $\ol{y}_1(k)$ from January 24 to July 01, 2020. Source: MSS.}
\label{fig:raw_diagnosed}
\end{figure}

There is also an additional data for the diagnosed cases from French retirement homes (EHPAD). However, the French government database\footnote{Website: \href{https://dashboard.covid19.data.gouv.fr/vue-d-ensemble?location=FRA}{COVID-19 - France.} (Accessed 01/10/2020)} and several other international databases\footnote{Website: \href{https://www.ecdc.europa.eu/en/cases-2019-ncov-eueea}{European Centre for Disease Prevention and Control.} (Accessed 01/10/2020)} \footnote{Website: \href{https://www.worldometers.info/coronavirus/country/france/}{Worldometers/coronavirus/France.} (Accessed 01/10/2020)} do not add the diagnosed cases from EHPAD to the cumulative number of diagnosed (confirmed) cases. That is, the data for the cumulative number of diagnosed cases is considered to be inclusive of the diagnosed cases from EHPAD. However, in all the above databases, the data on cumulative number of deaths is collected separately from both the hospitals and EHPAD.

\paragraph{Number of active hospitalized and ICU cases}
The data on the number of active hospitalized cases is denoted as $\ol{H}$ and is illustrated in Figure~\ref{fig:data_hospitals} along with the number of active ICU cases $\ol{B}$. This data corresponds to the number of people who are admitted to the hospitals and/or ICU on a given day. That is, it is not a cumulative data. Moreover, it doesn't include those who were diagnosed but not hospitalized. That is, this data corresponds to a certain proportion of people in the diagnosed compartment (D) of the SIDUR model. This data is available from March 17, 2020, onward.

\begin{figure}[!htb]
    \centering
    \includegraphics[width=0.9\textwidth]{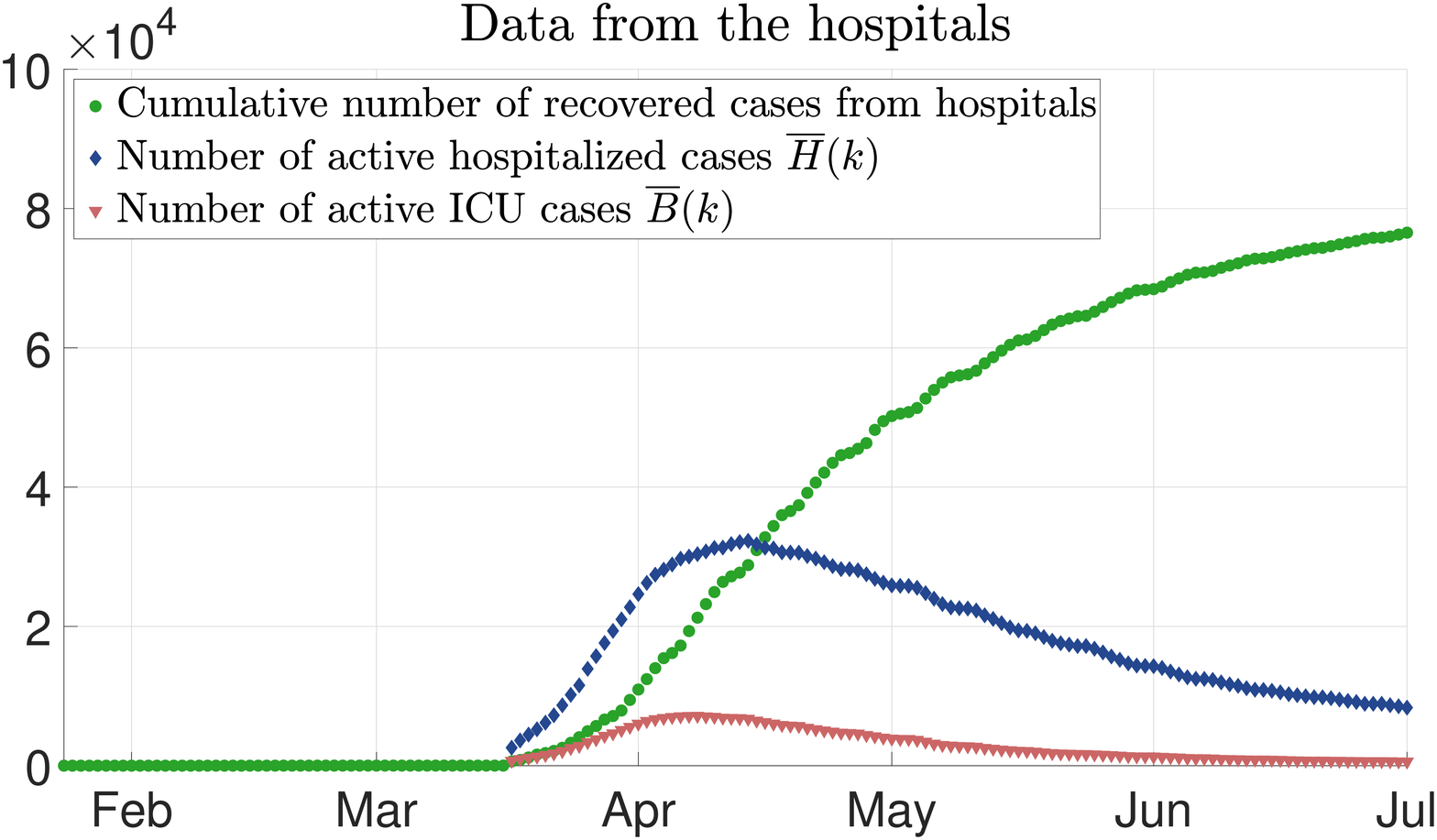}
	\caption{Total number of recovered cases who returned home after hospitalization from January 24 to July 01, 2020. The number of active COVID-19 hospitalized cases $\ol{H}(k)$ and ICU cases $\ol{B}(k)$ from March 17 to July 01, 2020. Source: MSS.}
\label{fig:data_hospitals}
\end{figure}

\begin{figure}[!htb]
    \centering
    \includegraphics[width=0.9\textwidth]{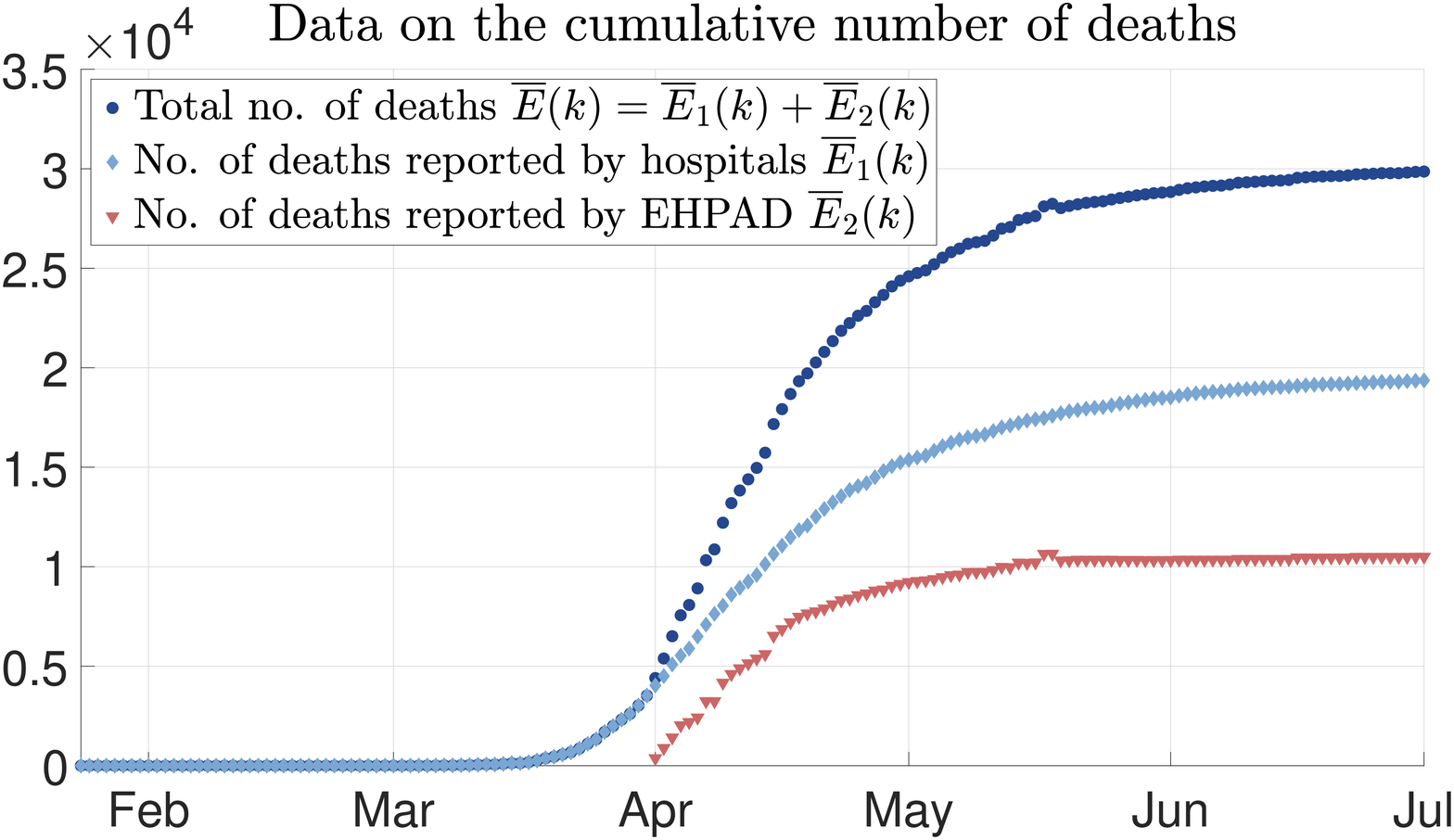}
	\caption{Total number of deaths from COVID-19 reported (a) by hospitals from January 24 to July 01, 2020, and (b) by retirement homes (EHPAD) from April 01 to July 01, 2020. Source: MSS.}
\label{fig:deaths}
\end{figure}

\paragraph{Cumulative number of recovered cases from hospitals}
This data is illustrated in Figure~\ref{fig:data_hospitals}. It corresponds to people who, after recovering from the disease, were discharged from the hospitals. Obviously, prior to recovering, they were diagnosed with the disease and hospitalized due to their severe symptoms.

\paragraph{Cumulative number of deaths}
The data on the cumulative number of diagnosed cases is considered to be inclusive of the diagnosed cases from the French retirement homes (EHPAD). However, the case for data on the cumulative number of deaths is different. Those who died at the hospitals and those who died in the retirement homes (EHPAD) are considered to be distinct. Thus, the cumulative number of deaths is the sum of both data, which are illustrated in Figure~\ref{fig:deaths}.

\begin{figure}[!htb]
    \begin{minipage}{\linewidth}
    \centering
    \includegraphics[width=0.9\textwidth]{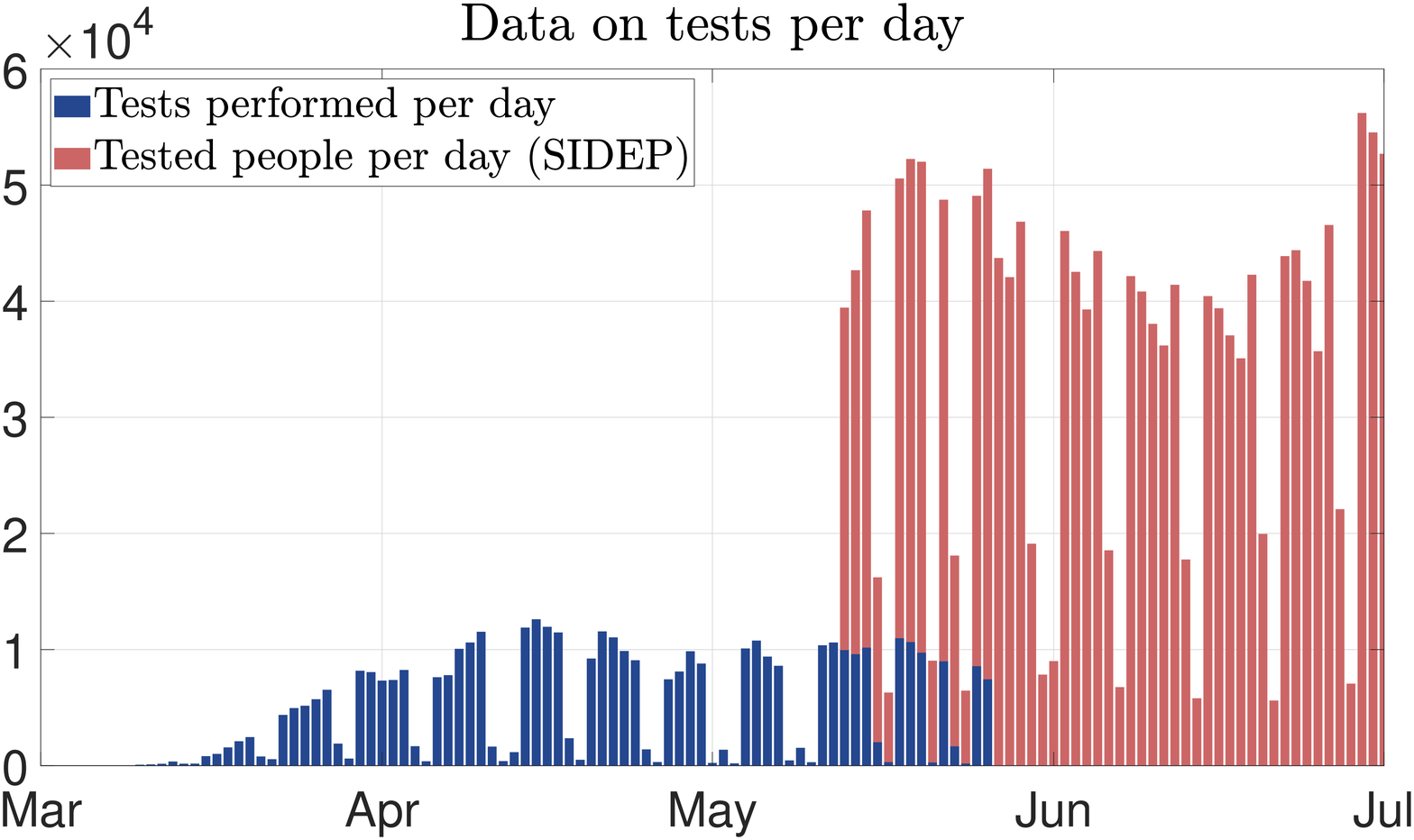}
    \\ (a) 
    \end{minipage}
    
     \begin{minipage}{\linewidth}
    \centering
    \includegraphics[width=0.9\textwidth]{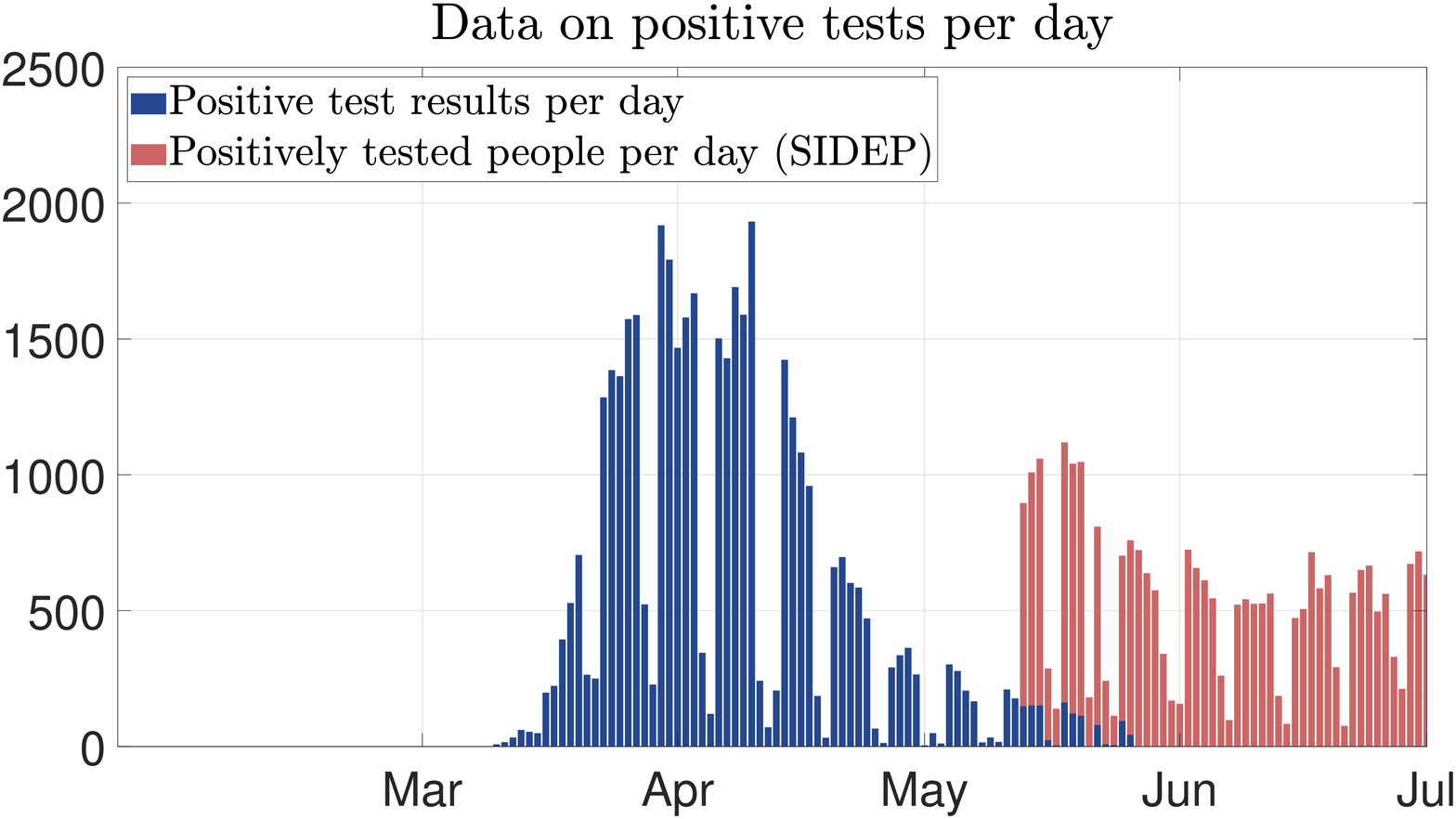}
    \\ (b) 
    \end{minipage}
\caption{Data on the PCR tests: (a) The number of tests performed per day from March 10 to May 26 and the number of tested people per day from May 13 to July 01; (b) The number of positive test results per day from March 10 to May 26 and the number of positively tested people per day from May 13 to July 01. Source: SPF.}
\label{fig:raw_1-tests}
\end{figure}

\paragraph{Number of tests and positive tests per day}

We have two types of data related to COVID-19 PCR tests. 
The first type of data is collected by SPF on the number of tests performed and positive test results per day from March 10 to May 26, 2020. However, this data is collected only from the central sampling laboratories: Eurofins Biomnis and Cerba. Figure~\ref{fig:raw_1-tests}(a) (blue) illustrates the number of tests performed per day and Figure~\ref{fig:raw_1-tests}(b) (blue) illustrates the number of positive test results per day.

The second type of data was made available after the deployment of a new information screening system (SI-DEP) by the SPF. This data is available from May 13, 2020, onward. It is collected from both the laboratories and the hospitals. However, the data reported by SI-DEP is the number of `tested people' per day instead of the number of `tests performed' per day. SI-DEP guarantees that only one test is counted per person. In the case of, for instance, multiple negative test results for a certain person, SI-DEP considers only the first date on which the PCR test was performed. Later, if that person gets a positive test result, then only this new result is reported in the data and the previous data is erased. Figure~\ref{fig:raw_1-tests}(a) (red) illustrates the number of tested people per day and Figure~\ref{fig:raw_1-tests}(b) (red) illustrates the number of positively tested people per day.

\subsection{Imputed data}
 
In the raw data, we only have the data for those who recover or die in the hospitals after being diagnosed with the disease. However, the removed compartment of the SIDUR model also comprises the diagnosed cases who were not hospitalized but were quarantined in their homes. There is no data that records the recovery of these people. Moreover, the data on PCR tests is also incomplete; there is no data on PCR tests from January 24 to March 09, 2020, and the data from March 10 to May 12, 2020, doesn't include the tests performed in the hospitals. Therefore, in order to infer the missing data, we impute the raw data by making reasonable assumptions.

\subsubsection{Cumulative number of removed cases}
From the data on the total number of recovered people from hospitals shown in Figure~\ref{fig:data_hospitals}, we see that 76,540 people have recovered from the hospitals as of July 01, 2020. If we subtract this number and the total number of deaths (Figure~\ref{fig:deaths}), i.e., 29,860, from the total number of diagnosed cases (Figure~\ref{fig:raw_diagnosed}), i.e., 165,700, we obtain $165,700-76,540-29,860=59,300$ people. Further subtracting the currently hospitalized cases (Figure~\ref{fig:data_hospitals}), i.e., 8336 as of July 01, we obtain $59,300-8336=50,964$ people, who might still be infected or have recovered. These people were diagnosed but were not hospitalized; they were quarantined in their homes. However, there is no data that provides a correct answer for how many people have recovered and how many of them are still infected. Therefore, using the relevant raw data, we infer the cumulative number of removed cases $\ol{y}_{2}$ by estimating the number of diagnosed cases who recovered from home.

We use the following notations for simplicity and brevity:

\begin{center}
	\begin{tabular}{ll}
	$\ol{y}_{1}'$ & Total diagnosed and hospitalized \\
	$\ol{y}_{1}''$ & Total diagnosed but not hospitalized \\
	$\ol{y}_{2}'$ & Total recovered/died in a hospital \\
	$\ol{y}_{2}''$ & Total recovered from home after diagnosis
	\end{tabular}
\end{center}
By definition, we have
\be \label{eq:Total_DA}
\ba{ccl}
\ol{y}_{1}(k) &=& \ol{y}_{1}'(k) + \ol{y}_{1}''(k) \\ 
\ol{y}_{2}(k) &=& \ol{y}_{2}'(k)+\ol{y}_{2}''(k)
\ea
\ee
where $k$ is from January 24 to July 01, 2020. Note that $\ol{y}_{1}(k)$ is illustrated in Figure~\ref{fig:raw_diagnosed} and
\[
\ol{y}_{1}'(k) = \ol{y}_{2}'(k) + \ol{H}(k),
\]
where $\ol{y}_{2}'(k)$ is the sum of the total number of recovered cases from hospitals (Figure~\ref{fig:data_hospitals}) and the total number of deaths (Figure~\ref{fig:deaths}), and $\ol{H}$ is the number of active hospitalized cases (Figure~\ref{fig:data_hospitals}). Thus, we can compute the total diagnosed cases who were not hospitalized as
\[
\ol{y}_{1}''(k) = \ol{y}_{1}(k) - \ol{y}_{1}'(k).
\]

Since there is no data for the diagnosed people who recovered from home, therefore $\ol{y}_{2}''(k)$ is unknown. Thus, we assume the following:
\be \label{eq:A_Home}
\frac{\ol{y}_{2}''(k)}{\ol{y}_{1}''(k)} = \frac{\ol{y}_{2}'(k)}{\ol{y}_{1}'(k)}.
\ee
That is, the ratio of the diagnosed cases who recovered in homes to the total diagnosed cases who were quarantined at homes is equal to the ratio of the diagnosed cases who recovered or died in hospitals to the total diagnosed cases who were hospitalized. In other words, we assume that the removal rate of people who were not hospitalized is equal to the removal rate of people who were hospitalized. Thus, from \eqref{eq:Total_DA} and \eqref{eq:A_Home}, we obtain
\[
\ol{y}_{2}(k) = \ol{y}_{2}'(k) \left( 1 + \frac{\ol{y}_{1}''(k)}{\ol{y}_{1}'(k)}\right) = \ol{y}_2'(k) \frac{\ol{y}_1(k)}{\ol{y}_1'(k)}.
\]
which corresponds to the model output $y_2(t)$ in \eqref{total_recovered_cases}.

\subsubsection{Combining two types of testing data}

From Figure~\ref{fig:raw_1-tests}, we see that the first type of data, which is available from March 10 to May 26, 2020, considers the number of tests performed and positive test results per day. On the other hand, the second type of data, which is available from May 13, 2020, onward, considers the number of tested people and positively tested people per day. However, no person is usually tested more than once per day. Therefore, we assume that the number of tested people per day is same as the number of tests performed per day. Similarly, the number of positively tested people per day is same as the number of positive test results per day. 
Note that if a person is tested more than once but on different days, then this assumption is not violated.


We consider three time intervals: 
(i)~January 24--March 09, when there is no data on PCR tests; (ii)~March 10--May 12, when there is incomplete data; (iii)~May 13--July 01, when there is complete data.
Let $\ol{u}$ and $\ol{y}_3$ denote the number of tests performed and the number of positive test results per day, respectively, for the entire interval January 24 to July 01, 2020. Let $\ol{u}',\ol{u}'',\ol{u}'''$ and $\ol{y}_3',\ol{y}_3'',\ol{y}_3'''$ be the number of tests performed and the number of positive test results obtained for the first, second, and third time intervals, respectively.
Since the data in the third interval is reliable, we do not make any imputations for $\ol{u}'''$ and $\ol{y}_3'''$. For the other two intervals, we make reasonable assumptions to complete the data.

\begin{enumerate}[(i)]
\item {\it January 24--March 09:} This interval corresponds to the beginning of the epidemic in France and the data for tests performed and positive test results per day for this interval is $\ol{u}'$ and $\ol{y}_3'$, respectively. During this interval, only those people were tested who showed symptoms. Moreover, recall the output relation \eqref{output_relation}. Then, we compute the data as follows:
$
\ol{u}'(k) \approx \ol{y}_3'(k)
$ and
$
\ol{y}_3'(k) = \ol{y}_1(k+1) - \ol{y}_1(k)
$, where $k$ is from January 24 to March 09.
In other words, during the first interval, the number of tests performed per day is assumed to be approximately equal to the number of positive test results obtained per day. Moreover, the number of positive test results obtained per day is equal to the number of diagnosed cases that day.

\item {\it March 10--May 12:} In the second interval, we have the data on PCR tests that is reported only by the laboratories and not by the hospitals. During this interval, we compute the data as follows:
$\ol{u}''$ is same as the data (Figure~\ref{fig:raw_1-tests}) and $\ol{y}_3''(k) = \ol{y}_1(k+1)-\ol{y}_1(k)$, where $k$ is from March 10 to May 12. 
\end{enumerate}
Based on the above data imputations, we obtain the number of tests performed per day $\ol{u}=\left[\ba{ccc} \ol{u}' & \ol{u}'' & \ol{u}''' \ea\right]$, which corresponds to the control input $u(t)$, and the number of positive tests obtained per day $\ol{y}_3=\left[\ba{ccc} \ol{y}_3' & \ol{y}_3'' & \ol{y}_3''' \ea\right]$, which corresponds to the model output $y_3(t)$ in \eqref{testedpos_cases}, for the complete time interval January 24 to July 01, 2020. 



\begin{figure}[!htb]
	\centering
	\includegraphics[width=0.7\textwidth]{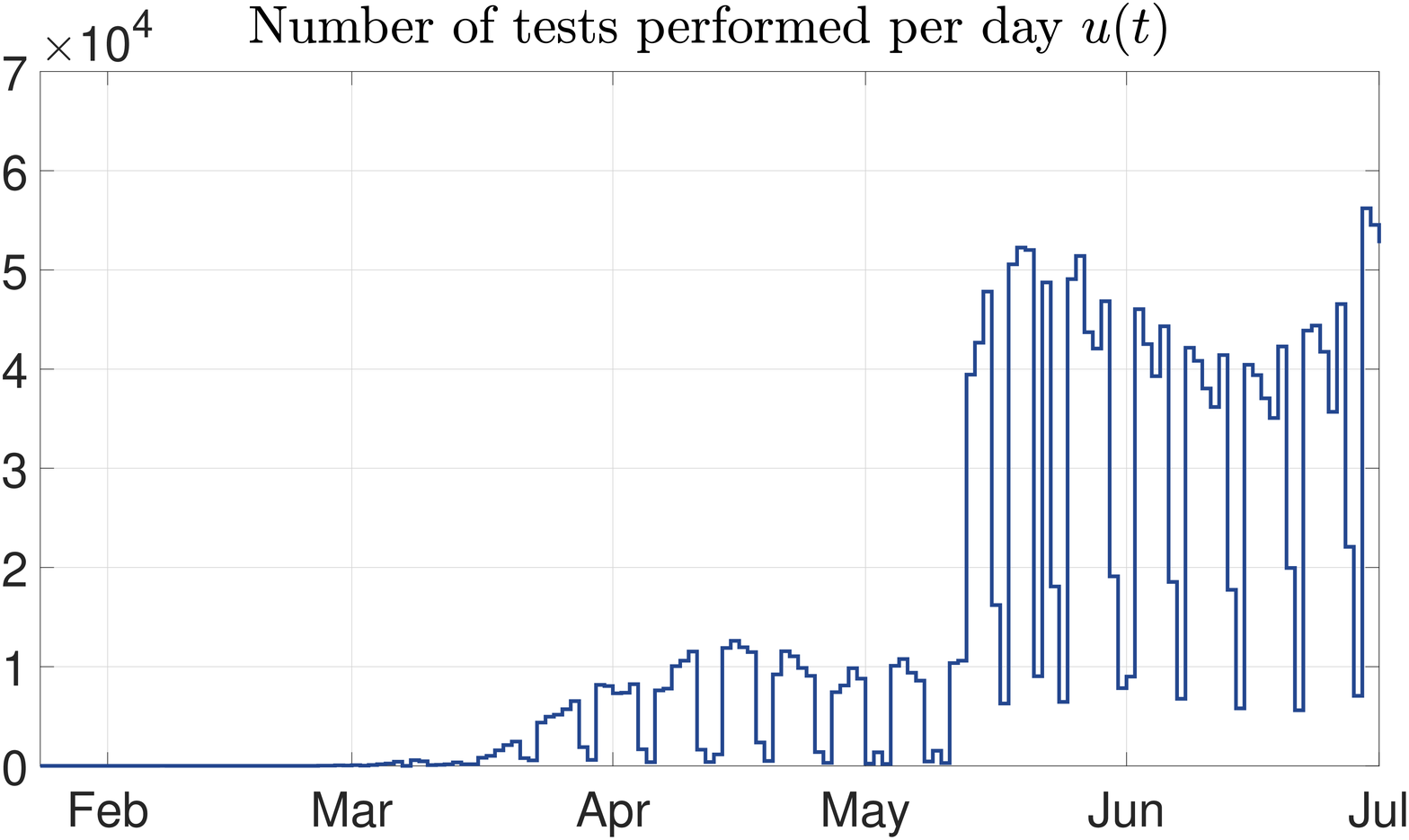}
	\caption{Input signal from the data.}
	\label{fig:input_plot}
\end{figure}

\begin{figure}[!htb]
	\begin{minipage}{0.5\linewidth}
    \centering
    \includegraphics[width=1.0\textwidth]{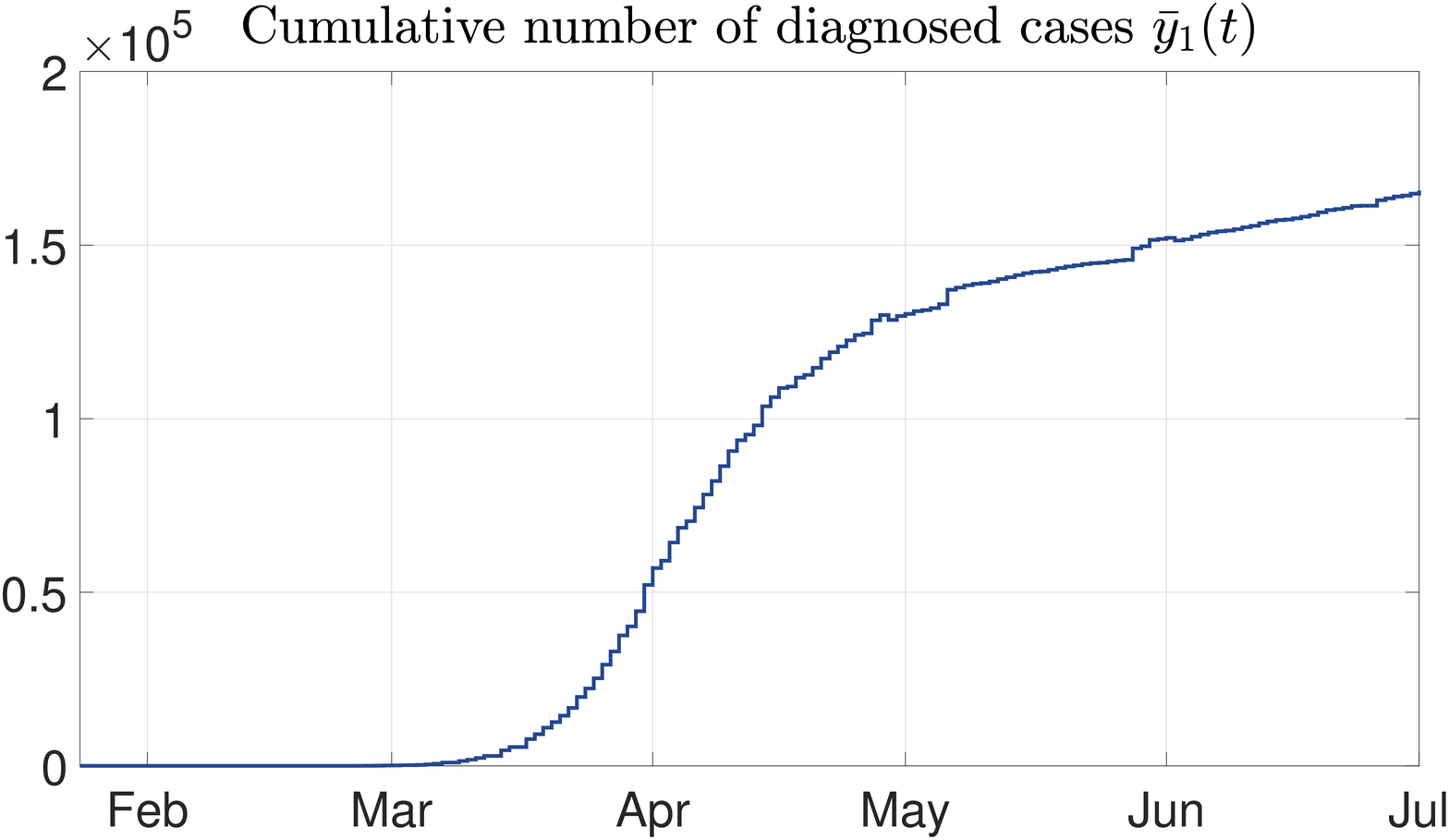}
    \end{minipage}
    \hfill
    \begin{minipage}{0.5\linewidth}
    \centering
    \includegraphics[width=1.0\textwidth]{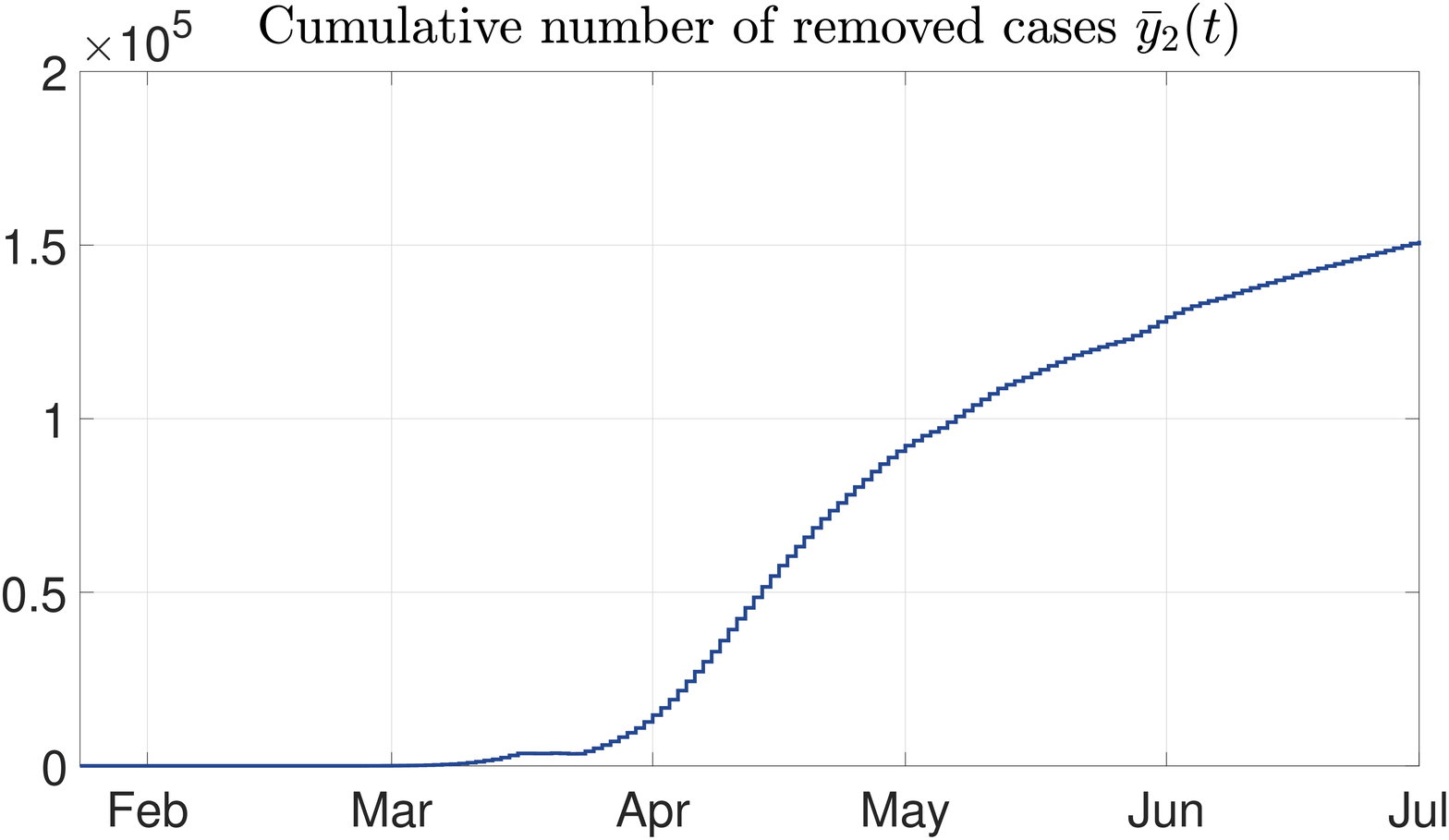}
    \end{minipage}
    
    \vspace*{0.5cm}
    \centering
    \begin{minipage}{0.5\linewidth}
    \centering
    \includegraphics[width=1.0\textwidth]{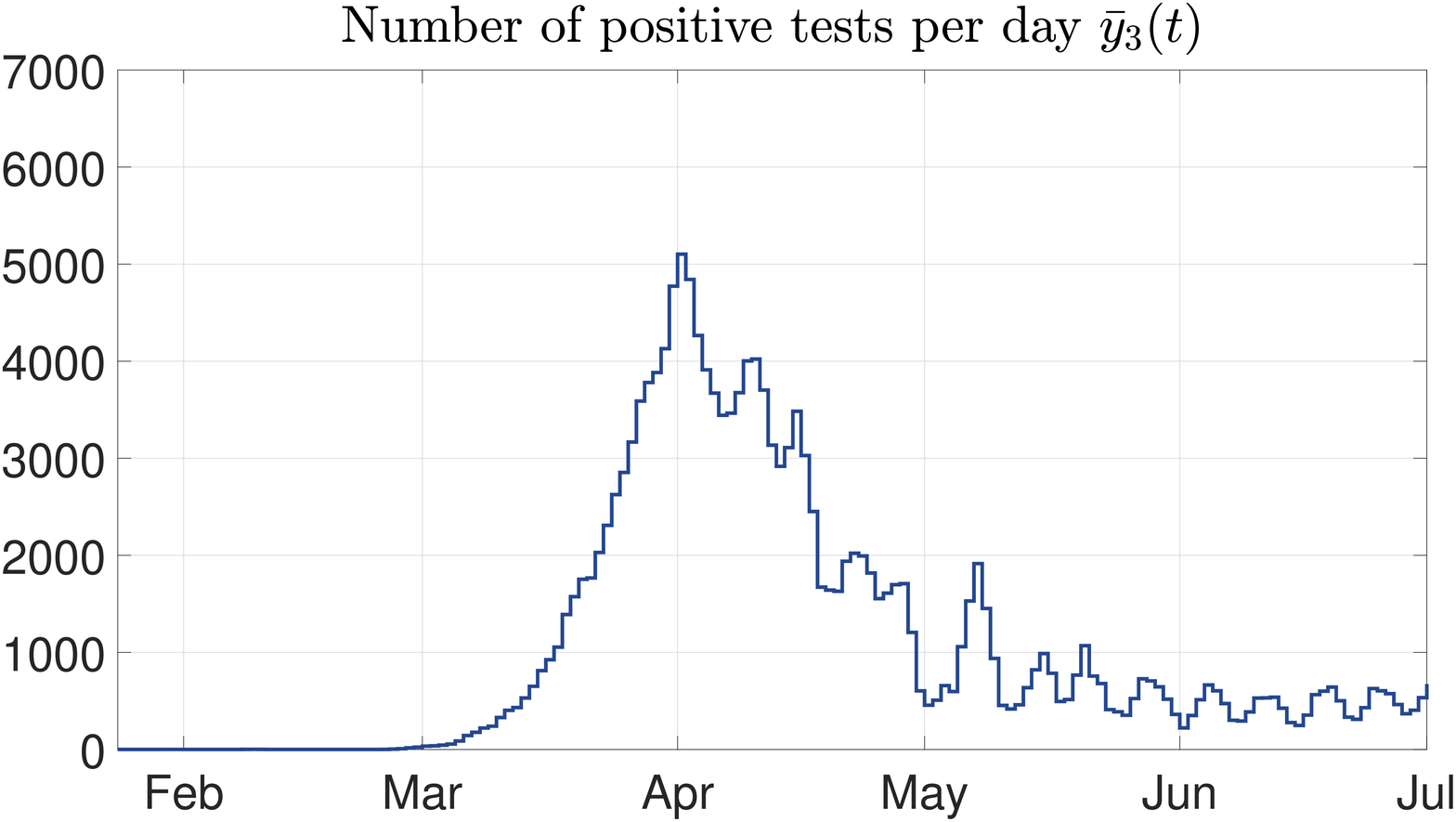}
    \end{minipage}
\caption{Output signals from the data.}
\label{fig:outputs_plot}
\end{figure}

\subsection{Input and output signals from the data}

The input $u(t)$ to the model corresponds to the number of tests performed per day $\ol{u}$. Let $\{1,2,\dots,\tau\}$ be the index set of the 160 days from January 24 to July 01, 2020, where $\tau=160$.
Then, for $k=1,2,\dots,\tau$, we define the input signal as
\[
u(t) = \ol{u}(k), \quad \text{for}~\lfloor t \rfloor \leq k < \lceil t \rceil
\]
which is illustrated in Figure~\ref{fig:input_plot}.

We denote by $\ol{y}_i(t)$, for $i=1,2,3$, the outputs obtained from the data. That is, the output signals from the data and the model are related as follows
\[
\ba{ccl}
\ol{y}_1(t) &=& y_1(t) + w_1(t) \\
\ol{y}_2(t) &=& y_2(t) + w_2(t) \\
\ol{y}_3(t) &=& y_3(t) + w_3(t)
\ea
\]
where $w_i(k)$, for $i=1,2,3$, represents the measurement noise.
The outputs $\ol{y}_1(t)$, $\ol{y}_2(t)$, and $\ol{y}_3(t)$ correspond to the cumulative number of diagnosed cases, the cumulative number of removed cases, and the number of positive test results per day, respectively. Similar to the input $u(t)$, we define the output signals from the data as
\[
\left\{
\ba{ccll}
\ol{y}_1(t) &=& \ol{y}_{1}(k), & \quad \text{for}~\lfloor t \rfloor \leq k < \lceil t \rceil  \\
\ol{y}_2(t) &=& \ol{y}_{2}(k), & \quad \text{for}~\lfloor t \rfloor \leq k < \lceil t \rceil \\
\ol{y}_3(t) &=& \ol{y}_3(k), & \quad \text{for}~\lfloor t \rfloor \leq k < \lceil t \rceil
\ea
\right.
\]
which are illustrated in Figure~\ref{fig:outputs_plot}.

\section{Estimation of model parameters} \label{sec:estimation}
In this section, we validate the SIDUR model by estimating the model parameters $\rho$, $\beta$, $\theta$, and $\gamma$ for the case of COVID-19 in France. 

\subsection{Estimation of $\rho$} \label{section_rho}

The removal rate $\rho$ can be directly estimated from the data outputs $\ol{y}_1$ and $\ol{y}_2$. Consider a daily sampling of the model equation \eqref{eq401e}, which leads to 
\[
\Delta x_\R(k) \approx \rho x_\D(k)
\]
where $\Delta$ stands for the forward difference operator, i.e., $\Delta x_\R(k) = x_\R(k+1)-x_\R(k)$ for some nonnegative integer $k$. Therefore, from the relation between $y_1$ and $y_2$ in \eqref{y2y1_relation}, we obtain
\be \label{eq_LSgamma2}
\Delta \ol{y}_2(k) = \rho\, \ol{y}_{12}(k) + e(k)
\ee
where $\ol{y}_{12}(k) = \ol{y}_1(k) - \ol{y}_2(k)$ and $e(k)$ is the error term due to measurement noise. Then, the problem of estimating $\rho$ can be formulated as follows: Find $\rho^*$ such that
\[
\rho^* = \arg\min_{\rho\in[0,1]} \sum_{k=1}^\tau \|\Delta \ol{y}_2(k) - \rho \, \ol{y}_{12}(k)\|^2.
\]
Notice that the solution of this problem can be obtained through least-square estimation \cite[Chapter 7]{ljung1999}.

\subsection{Estimation of $\beta,\theta,\gamma$} \label{section_beta-theta-gamma}
We formulate a problem of fitting the model outputs $y_1(t),y_2(t),y_3(t)$ to the data outputs $\ol{y}_1(k),\ol{y}_2(k),\ol{y}_3(k)$, where $k=1,2,\dots,\tau$ with $\tau$ being the final time. The model fitting is done by optimizing the parameters $\beta,\theta,\gamma$ for the time interval $[0,\tau]$ under the assumption that $\gamma$ is constant whereas $\beta$ and $\theta$ are piecewise constants.

To limit the rate of spread of COVID-19, the French government announced to place a lockdown all over France from March 17 to May 10, 2020, which included restricted human mobility, strict social distancing measures, and closure of schools, offices, and marketplaces. However, the essential services and public establishments were authorized to remain open under strict preventive measures. People were allowed to leave their homes with face masks only for necessary groceries, brief exercise within a certain radius of their homes, or for urgent medical reasons. Such an intervention from the public authority is necessary to mitigate the rate of spread of the disease and to reduce the value of infection rate $\beta$. Therefore, in relation to the case of France, we divide the time into three intervals: (i)~Before lockdown (January 24 to March 16), (ii)~During lockdown (March 17 to May 10), and (iii)~After lockdown (May 11 to July 01). We consider a different value of the infection rate $\beta$ during each of these intervals, i.e.,
\[
\beta(k) = \left\{\ba{ll}
\beta_1, & \text{for}~k=\text{January 24 to March 16} \\
\beta_2, & \text{for}~k=\text{March 17 to May 10} \\
\beta_3, & \text{for}~k=\text{May 11 to July 01}
\ea\right.
\]
where $\beta_1,\beta_2,\beta_3$ are positive real numbers.
For the testing specificity parameter $\theta$, we divide the time into two intervals: (i)~Before May 11 and (ii) After May 11, where May 11 corresponds to the change in testing policy in France \cite{hale2020b} (also see the website of Our-World-in-Data\footnote{Website: \href{https://ourworldindata.org/grapher/covid-19-testing-policy?time=2020-05-11&region=Europe}{Our World in Data: COVID-19 Testing Policies.} (Accessed 30/09/2020)}). Thus, we have
\[
\theta(k) = \left\{\ba{ll}
\theta_1, & \text{for}~k=\text{January 24 to May 10} \\
\theta_2, & \text{for}~k=\text{May 11 to July 01}
\ea\right.
\]
where $\theta_1$ and $\theta_2$ are real numbers in the interval $[0,1]$.

Let $p = [\ba{cccccc} \beta_1 & \beta_2 & \beta_3 & \theta_1 & \theta_2 & \gamma \ea]^T$ be the parameter vector. Then, the goal is to find $p^*$ such that
\be \label{prob:param_est}
p^* = \arg\min_{p} \mc{J}(p)
\ee 
where the cost function is given by
\be \label{eq:cost_fun}
\mc{J}(p) = \sum_{k=1}^{\tau} \left[ \left(y_1(k,p)-\ol{y}_1(k)\right)^2 + \left(y_2(k,p)-\ol{y}_2(k)\right)^2 + \left(y_3(k,p)-\ol{y}_3(k)\right)^2 \right]
\ee
with the model outputs $y_i$, $i=1,2,3$, depending on the parameter vector $p$.
Note that we consider the data from January 24 to July 01, 2020, therefore we have $\tau = 160$ days.

To solve this problem, one can also pose it as a least-square estimation, as we did for the removal rate $\rho$, by defining relations between the data outputs $\ol{y}_1,\ol{y}_2,\ol{y}_3$. However, such relations include the difference operator $\Delta$ applied twice to the data outputs, which is usually not recommended when the data is noisy because it amplifies the measurement noise. Moreover, the gradient-based estimation algorithms \cite[Chapter 4]{nelles2001} are also not suitable due to the difficulty of computing the gradient of the cost function $\mc{J}$ online with respect to the parameter vector $p$. This is because the model outputs $y_1,y_2,y_3$ do not depend directly on the parameters but through the solution trajectories of the SIDUR model. For simplicity, therefore, we choose the particle swarm optimization (PSO) \cite{kennedy1995} described in \ref{appendix_PSO}, which is a `derivative-free' algorithm, to estimate the parameter vector $p$.


\begin{table}[!htb]
    \centering
    \begin{tabular}{|l||l|}
        \hline
        Infection rate & $\beta_1 = 0.3708 \quad \beta_2 = 0.0707 \quad \beta_3 = 0.3717$  \\
        \hline
        Testing specificity & $\theta_1 = 0.9948 \quad \theta_2 = 0.9967$  \\
        \hline
        Recovery rate & $\gamma = 0.1589$   \\
        \hline
        Removal rate & $\rho = 0.0499$ \\ 
        \hline
    \end{tabular}
    \caption{Estimated parameter values.}
    \label{tab:estimated_parameters}
\end{table}

The estimated parameter values are provided in Table~\ref{tab:estimated_parameters}. The estimated recovery rate $\gamma$ and removal rate $\rho$ show that an undiagnosed person recovers in an average period of about $6.3$ days and a diagnosed person recovers or dies in an average period of about $20$ days. The testing specificity parameter changes slightly from $\theta_1=0.9948$ to $\theta_2=0.9967$, which can have significant impact on the positive test results because it multiplies with the sum of the susceptible and unidentified recovered population in \eqref{eq:testable} that is in the order of $10^7$ in the case of France.

The infection rate $\beta$ changes its value twice. First, it drops from $\beta_1=0.3708$ to $\beta_2=0.0707$ when the lockdown is implemented in France on March 17, which significantly decreased the rate of the epidemic spread. Then, it rises from $\beta_2=0.0707$ to $\beta_3=0.3717$ when the lockdown is lifted on May 10. Many restrictions like social distancing and wearing of face masks were still in place after May 10 in order to prevent the spread of COVID-19 in France. However, the increase in the value of $\beta$ can be explained by the summer vacations when people were allowed to travel everywhere across  France and Europe\footnote{Website: \href{https://www.sortiraparis.com/actualites/a-paris/articles/215179-coronavirus-vacances-d-ete-partout-en-france-et-en-europe}{Sortir \`{a} Paris: Summer holidays in France and Europe.} (Accessed 30/09/2020)}. This made the places with tourist attractions very crowded and resulted in a higher infection rate.

\subsection{Model fitting}


Using the estimated values of the model parameters in Table~\ref{tab:estimated_parameters}, we run the model from January 24 to July 01, 2020. The model fits the output signals data as shown in Figure~\ref{fig:validation}.

\begin{figure}[!htb]
\centering
	\begin{minipage}{0.9\textwidth}
	\centering
	\includegraphics[width=\textwidth]{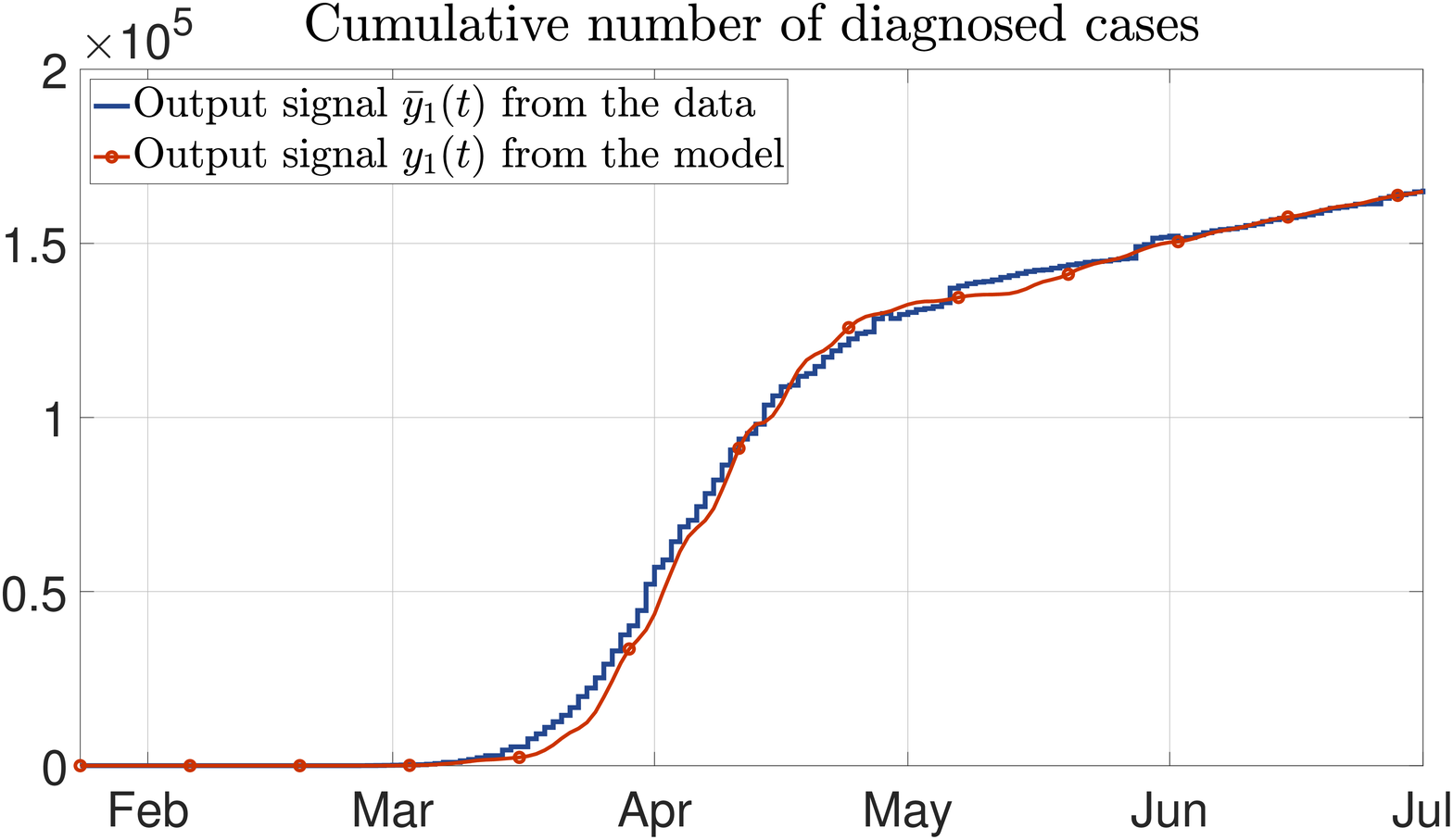}
	\end{minipage}
	
	\vspace*{0.5cm}
	\begin{minipage}{0.9\textwidth}
	\centering
	\includegraphics[width=\textwidth]{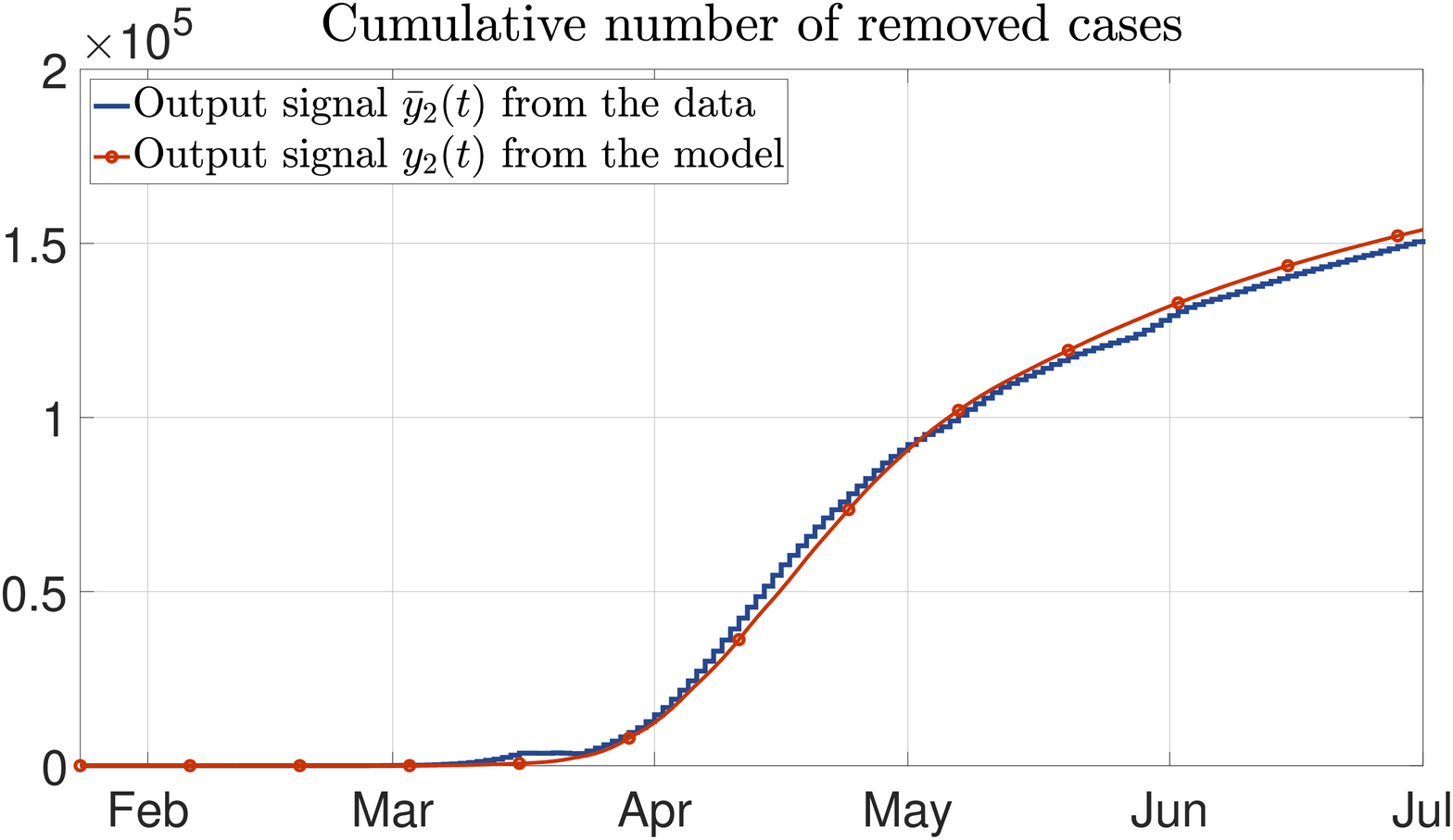}
	\end{minipage}
	
	\vspace*{0.5cm}
	\begin{minipage}{0.9\textwidth}
	\centering
	\includegraphics[width=\textwidth]{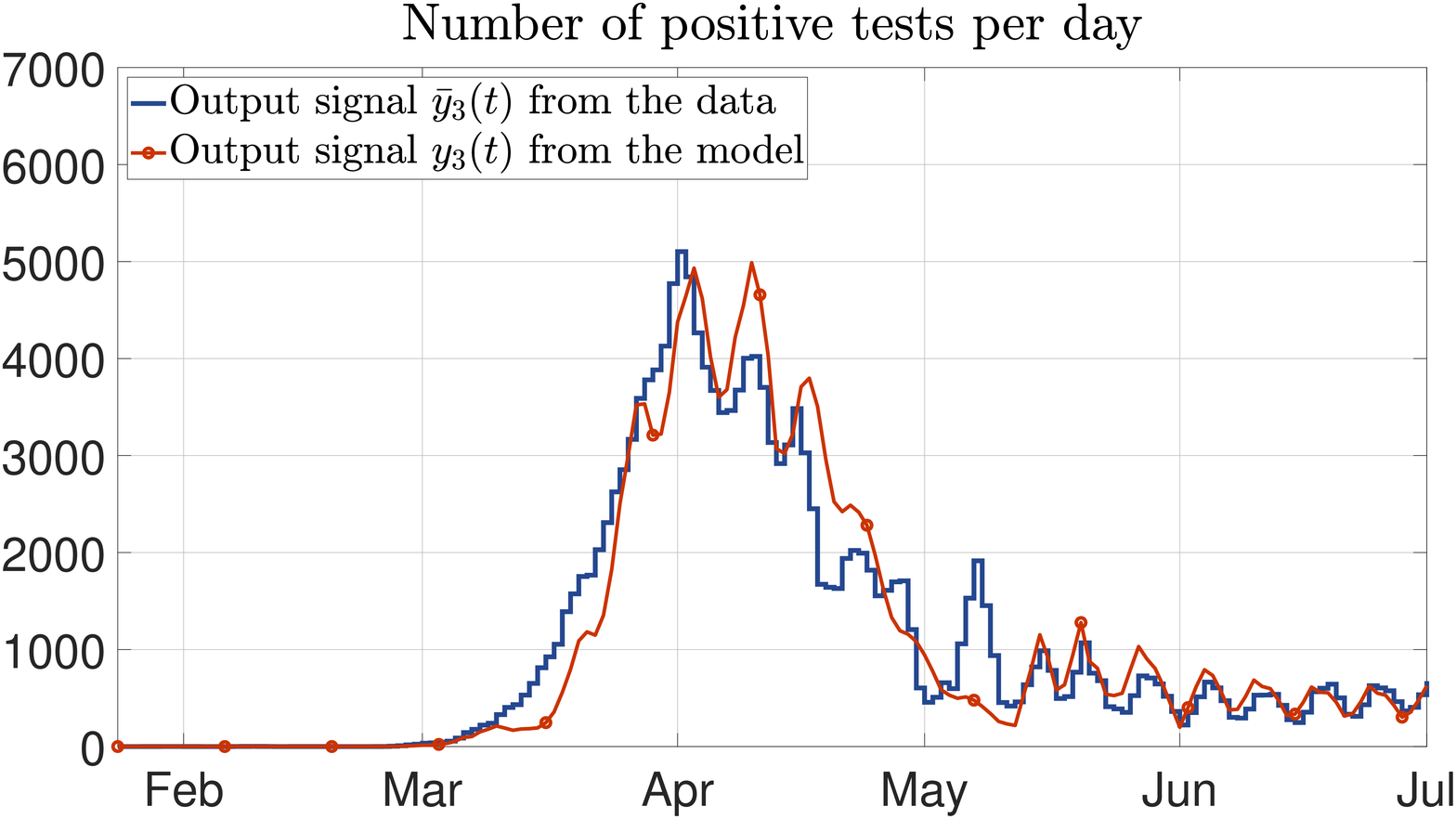}
	\end{minipage}
	
	\caption{Model fitting.}
	\label{fig:validation}
\end{figure}


The basic reproduction number $R_0$ at the outbreak of the COVID-19 epidemic in France is computed using \eqref{eq:basic_R0} and the average value of effective reproduction number $R_t$ during three phases (before, during, and after lockdown) are computed using \eqref{eq:eff_R}. For the `after lockdown' phase, we chose July 01, 2020, to compute $R_t$ because it is the date up to which our data is considered. These computed values are shown in Table~\ref{tab:reproduction_numbers} with a comparison to the ones reported by the government\footnote{Website: \href{https://www.santepubliquefrance.fr/maladies-et-traumatismes/maladies-et-infections-respiratoires/infection-a-coronavirus/documents/bulletin-national/covid-19-point-epidemiologique-du-17-septembre-2020}{COVID-19 France: Epidemiological update.} (Accessed 30/09/2020)} \footnote{Website: \href{https://dashboard.covid19.data.gouv.fr/suivi-indicateurs?location=FRA}{COVID-19 France: Monitoring of the epidemic.} (Accessed 30/09/2020)}.

\begin{table}[!htb]
    \centering
    \begin{tabular}{|l||l|l|}
        \hline
        Epidemic phases &  Computed from model & Reported by French government \\
        \hline
        Outbreak & $R_0=2.33$ & $R_0=2.7$ \\
        \hline
        Before lockdown & $R_t = 2.3$ & $R_t=2.7$\\
        \hline
        During lockdown & $R_t = 0.33$ & $R_t=0.7$ \\ 
        \hline
        After lockdown & $R_t = 1$ & $R_t=1$ \\
        \hline
    \end{tabular}
    \caption{The basic reproduction number $R_0$ at the outbreak and the values of the effective reproduction number $R_t$ at the end of each phase of the COVID-19 epidemic in France. The values computed by our model are quite close to the ones reported by the French government.}
    \label{tab:reproduction_numbers}
\end{table}

The change in the value of $R_t$ also influences the evolution of diagnosed population $x_\D(t)$. This is because larger value of $R_t$ results in a larger infected population $x_\I(t)$ and smaller value of $R_t$ results in a smaller infected population $x_\I(t)$, which respectively increases and decreases the probability of detecting an infected person $x_\I(t)/x_\T(t)$ by a single test. Keeping the number of tests performed per day same, the larger probability of detection $x_\I(t)/x_\T(t)$ results in a larger diagnosed population $x_\D(t)$.

In Table~\ref{tab:reproduction_numbers}, we see that the placement and lifting of lockdown on March~17 and May~11, respectively, had a significant impact on the value of $R_t$. Such an effect on $R_t$ impacted the evolution of the diagnosed population $x_\D(t) = y_1(t) - y_2(t)$, which can be interpreted as the number of active confirmed cases and is illustrated in Figure~\ref{fig:validation2}. The placement of lockdown reduced the value of $R_t$ and resulted in less number of active confirmed cases as compared to the scenario in Figure~\ref{fig:validation2} where the lockdown was not placed on March~17. In this scenario, as shown in Figure~\ref{fig:validation2}, the number of active confirmed cases would have increased to a point that could have challenged the available medical facilities such as hospital beds, ventilators, and ICUs. On the other hand, the lifting of lockdown increased the value of $R_t$ and resulted in more number of active confirmed cases as compared to the scenario where the lockdown was not lifted on May~11.

\begin{figure}[!htb]
    \centering
    \includegraphics[width=0.9\textwidth]{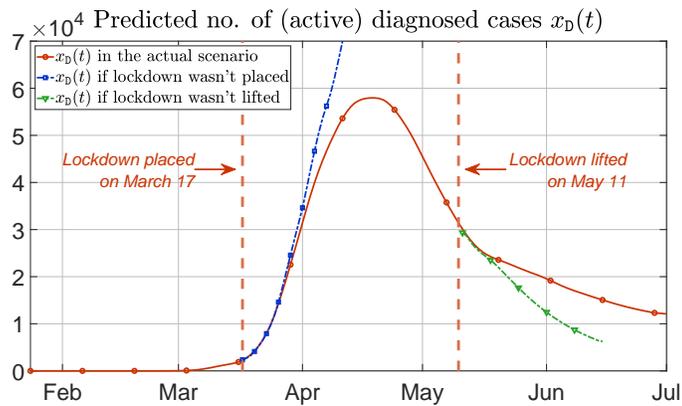}
    \caption{The comparison between the number of active diagnosed cases in the actual scenario vs. two scenarios: if the lockdown was not placed on March 17 and if the lockdown was not lifted on May 11.}
    \label{fig:validation2}
\end{figure}

\subsection{Number of active ICU patients and deaths}


The number of active ICU patients $B(t)$ is a function of the number of active infected people $A(t)$. Since an infected person starts to show symptoms after the average incubation period of approximately $5$ days, \cite{lauer2020}, and a person takes on average 12 days from being diagnosed to being addmitted to ICU, \cite{zhou2020}, we assume $\psi^{-1} = 5 + 12=17$ days to be the average time delay from getting infected to being admitted to ICU for a typical COVID-19, critically ill case. Thus, we model the number of active ICU patients $B(t)$ as a function of $A(t-\psi^{-1})$, which is approximated by:
\be \label{eq:icu-infection}
B(t) =  b_1 A(t-\psi^{-1}) + b_2 \sqrt{A(t-\psi^{-1})}
\ee
where $b_1$ and $b_2$ are the parameters (\ref{appendix_icu-death}, Table~\ref{tab:param_death-ICU}) that are determined via the least-square solution to fit \eqref{eq:icu-infection} to the data on the number of ICU patients. This is illustrated in Figure~\ref{fig:ICU-Infection}.

\begin{figure}[!htb]
    \centering
    \includegraphics[width=0.9\textwidth]{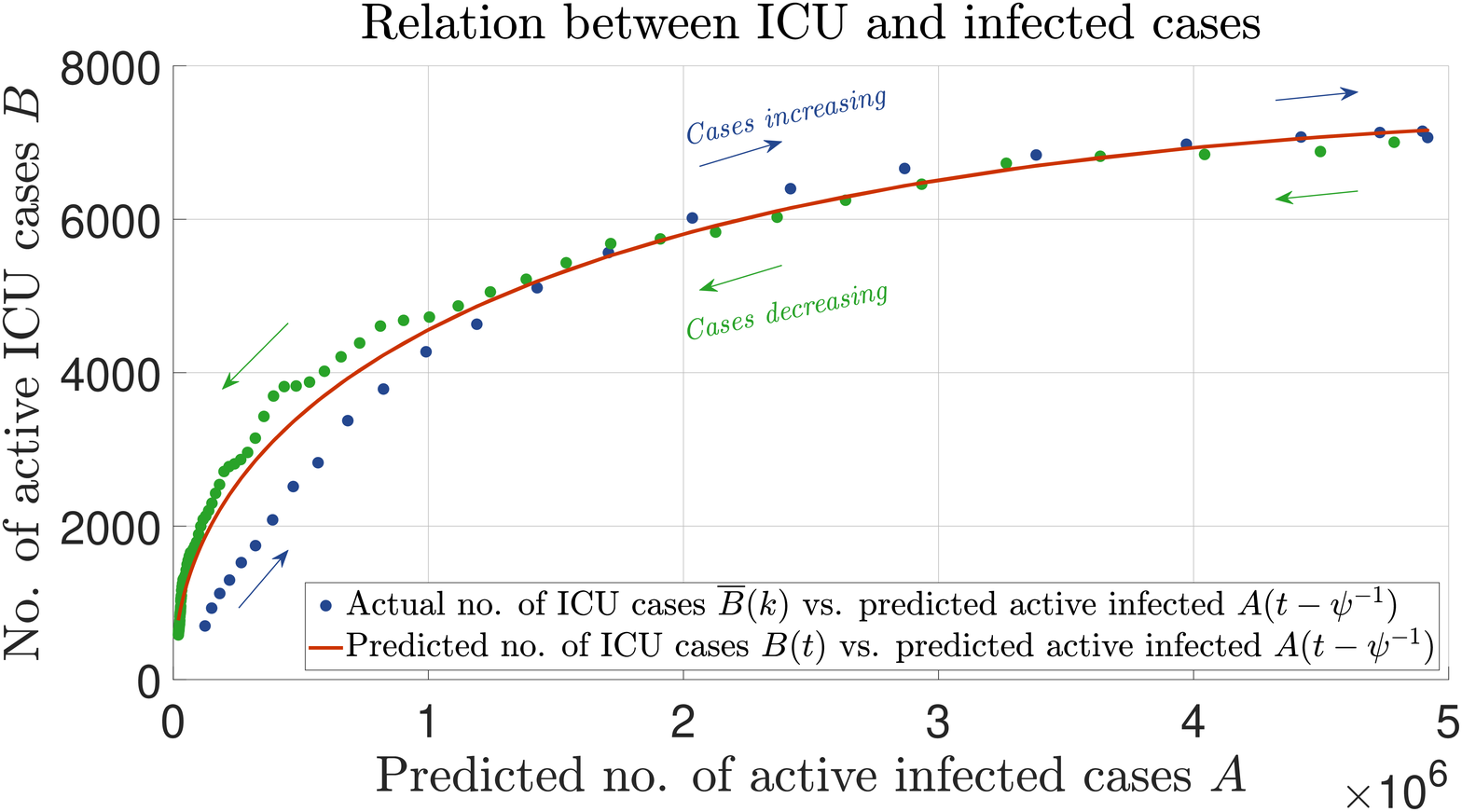}
    \caption{Number of active ICU patients $B(t)$ with respect to the number of active infected cases $A(t)$.} 
    \label{fig:ICU-Infection}
\end{figure}

\begin{figure}[!htb]
    \centering
    \includegraphics[width=0.9\textwidth]{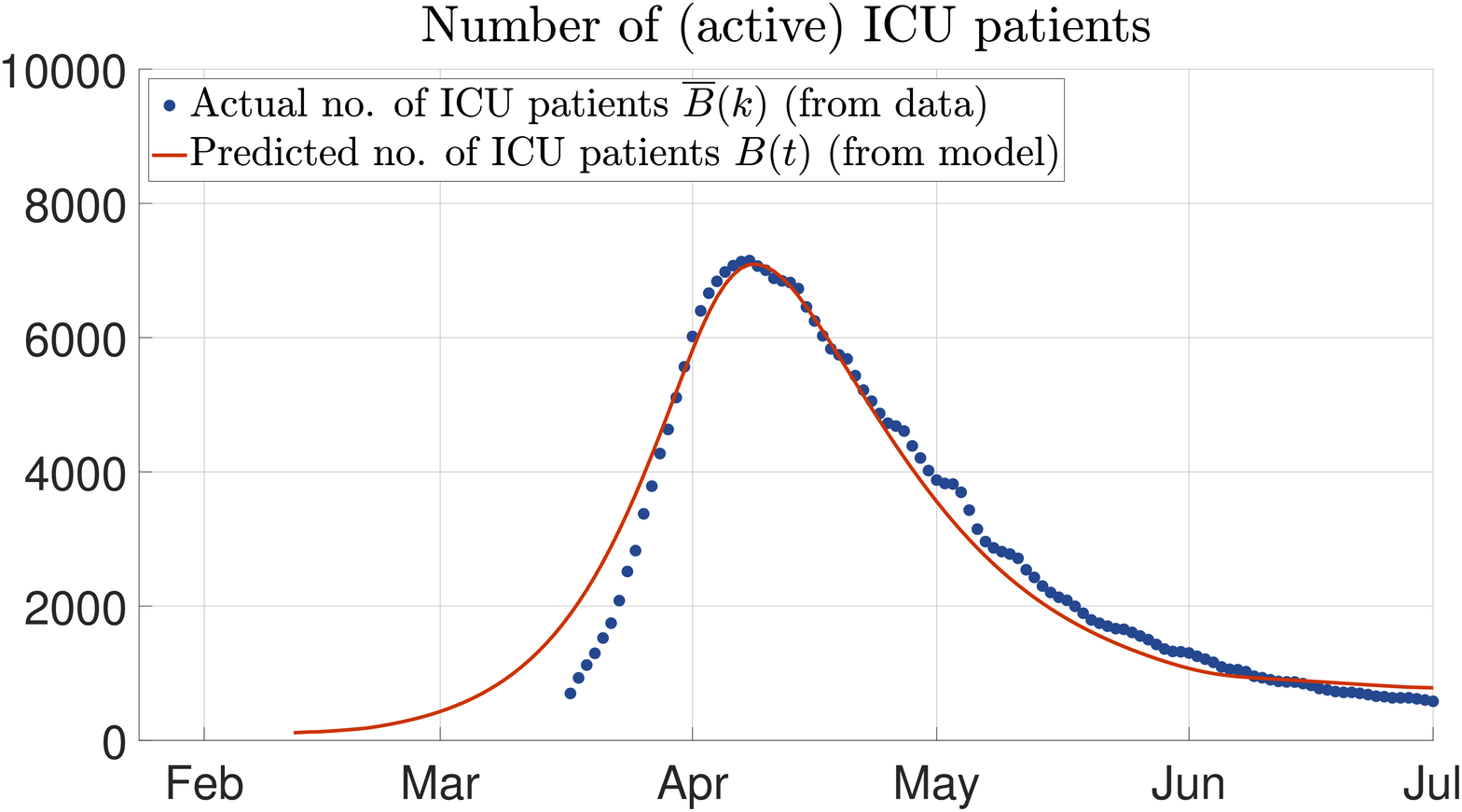}
    \caption{Model fit of the data on the number of active COVID-19 ICU cases $B(t)$ in France using the relation \eqref{eq:icu-infection}.}
    \label{fig:ICU-fit}
\end{figure}
\begin{figure}[!htb]
    \centering
    \includegraphics[width=0.9\textwidth]{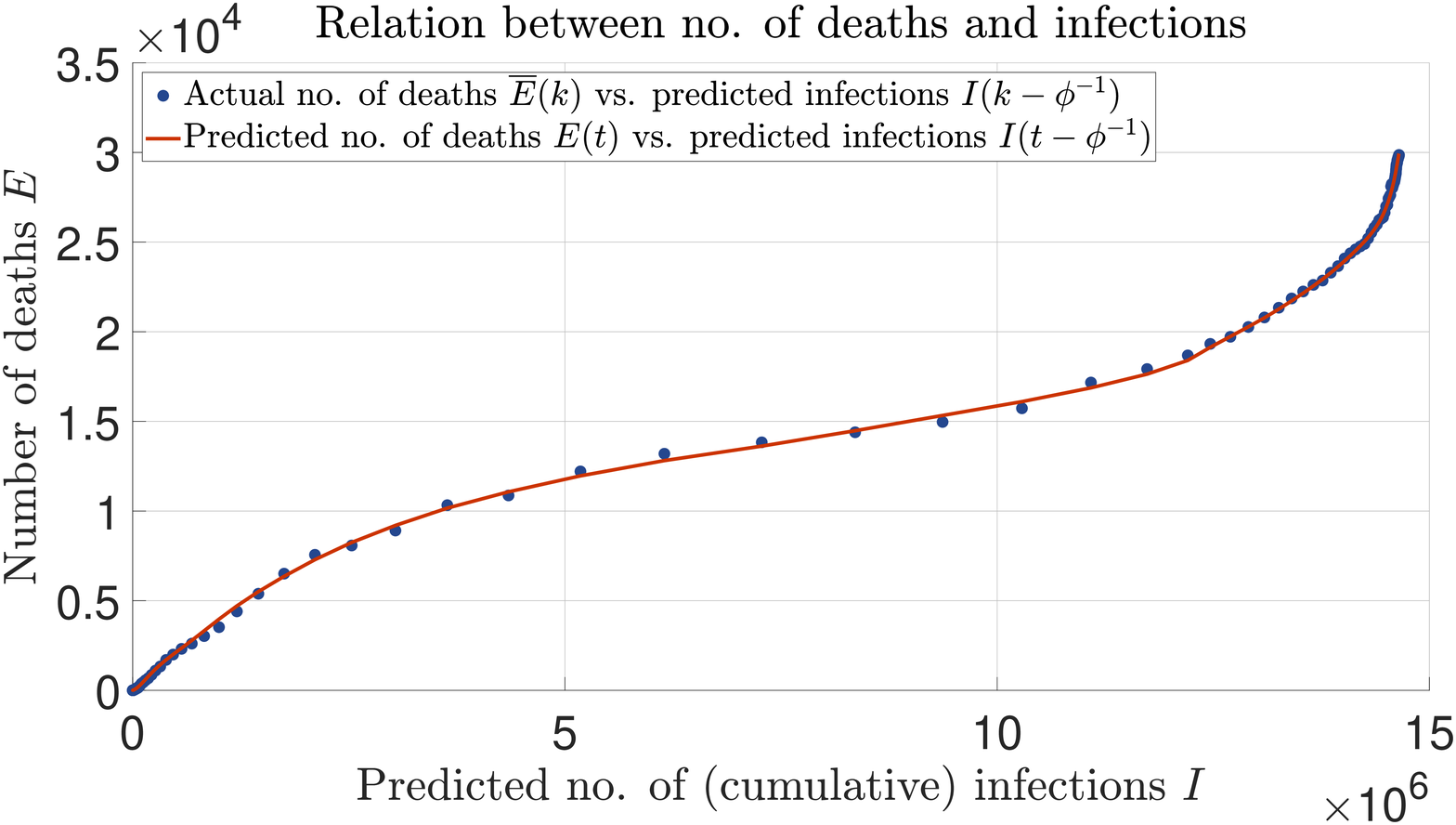}
    \caption{Cumulative number of deaths $E(t)$ with respect to the cumulative number of infected cases $I(t)$.}
    \label{fig:Death-Infection}
\end{figure}
\begin{figure}[!htb]
    \centering
    \includegraphics[width=0.9\textwidth]{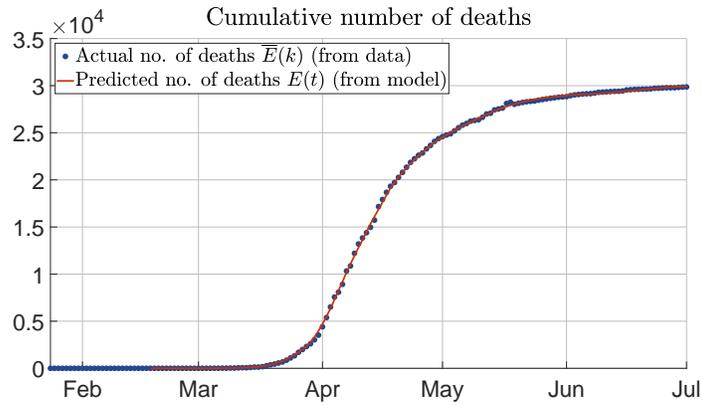}
    \caption{Model fit of the data on the number of COVID-19 deaths $E(t)$ in France using the relation \eqref{eq:death-infection}.}
    \label{fig:Death-fit}
\end{figure}

Similar to the case of the number of active ICU patients, a typical non-surviving case has an average incubation period of $5$ days and, in addition to that, an average removal period of $\rho^{-1}\approx 20$ days, where $\rho$ is the removal rate, Table~\ref{tab:estimated_parameters}. Thus, assuming $\phi^{-1}=5+\rho^{-1}=25$ days to be the average time delay from getting infected to death of a typical non-surviving COVID-19 case, we model the number of deaths $E(t)$ as a function of $I(t-\phi^{-1})$, which is approximated by the following polynomial:
\be \label{eq:death-infection}
E(t) = \sum_{i=1}^{10} e_i I^i(t - \phi^{-1})
\ee
where $e_i$, for $i=1,\dots,10$, are the parameters (\ref{appendix_icu-death}, Table~\ref{tab:param_death-ICU}) that are determined via the least-square solution to fit \eqref{eq:death-infection} to the data on the number of deaths. This is illustrated in Figure~\ref{fig:Death-Infection}. 

Using the relations \eqref{eq:icu-infection} and \eqref{eq:death-infection}, we illustrate the model fit of the number of active ICU cases and the cumulative number of deaths with the data in Figure~\ref{fig:ICU-fit} and \ref{fig:Death-fit}.

\section{Study and design of testing policies} \label{sec:control}

In this section, we use the model validated with the COVID-19 data of France to design two types of testing policies. The first one assumes that the total stockpile of tests is unlimited, but the testing capacity per day is limited. This results in the so called \textit{Best Effort Strategy for Testing (BEST)}, which gives the minimum number of tests needed to be performed per day in order to stop the epidemic from growing. In other words, if the BEST is applied, then the number of new infections stop to grow with respect to time. 

The second strategy, named COST (\textit{Constant Optimal Strategy for Testing}) assumes that the total stockpile of tests in a country is limited. In such a case, we investigate the optimal testing policy per day that results in a minimum epidemic peak. In contrast with BEST, there is an optimal value of tests to be performed per day which is smaller than the maximum testing capacity per day.

\subsection{Best effort strategy for testing}

Assume that the total stockpile of tests is unlimited during the whole epidemic period. This assumption is valid for a country that can manufacture or buy tests continuously during the time of the epidemic. Based on this, we provide a testing policy recommendation on the daily testing capacity starting from a certain time $t^*$ in order to change the course of the epidemic in a sense that is defined below. For simplicity, we further assume that the number of tests performed per day is considered to be the daily testing capacity. In other words, the daily testing capacity is utilized completely each day, i.e., $u(t) = c(t)$.

We say that, at any time $t$, an epidemic is {\em spreading} if the number of undiagnosed infected population $x_\I(t)$ is increasing, i.e., the effective reproduction number $R_t>1$. On the other hand, an epidemic is {\em non-spreading} if $x_\I(t)$ is not increasing, i.e., the effective reproduction number $R_t \leq 1$.

\begin{Definition}[BEST]
The {\it best effort strategy for testing (BEST) at a given time $t^*$} is the minimum number of tests to be performed per day from time $t^*$ onward such that the epidemic switches from spreading to non-spreading at $t^*$.
\end{Definition}

In other words, BEST provides the smallest lower bound on the number of tests performed per day, sufficient to change at given time $t^*$ the course of the epidemic from spreading to non-spreading.
In order to compute the BEST at a given time $t^*$, we first define the following function:
\begin{equation}\label{c_star} 
c^*(t)=x_\T(t)\left|\frac{ \beta(t)}{N}x_\S(t)-\gamma\right|_+
\end{equation} 
where, by definition, for any scalar $z$, $|z|_+=z$ if $z > 0$ and $|z|_+=0$ otherwise. 
\begin{Proposition} \label{prop:BEST}
Assume that the infection rate $\beta$ is non-increasing while the testing specificity parameter $\theta$ is non-decreasing on a time interval $[t^*,t_1)$, for some $t^* < t_1$. Then, the best effort strategy for testing (BEST) at time $t^*$ is given by 
\[
u(t)=c^*(t^*)=x_\T(t^*)\left|\frac{ \beta(t^*)}{N}x_\S(t^*)-\gamma\right|_+,\quad \forall t\in[t^*,t_1).
\] 
\end{Proposition}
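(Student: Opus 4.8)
The plan is to split the argument into a pointwise minimality part (which is immediate) and a persistence part (which is the real work). First I would rewrite \eqref{eq401b} in homogeneous form $\dot{x}_\I(t)=x_\I(t)\bigl[\tfrac{\beta(t)}{N}x_\S(t)-\tfrac{u(t)}{x_\T(t)}-\gamma\bigr]$. Since $x_\I(t)>0$ throughout (the $\I$-equation is homogeneous in $x_\I$) and hence $x_\T(t)\ge x_\I(t)>0$ by \eqref{eq:testable}, the epidemic is non-spreading at $t$ exactly when $\tfrac{u(t)}{x_\T(t)}\ge\tfrac{\beta(t)}{N}x_\S(t)-\gamma$, i.e. when $u(t)\ge c^*(t)$, by the definition of $|\cdot|_+$ and $u\ge0$. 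Thus the least nonnegative rate making the epidemic non-spreading exactly at $t^*$ is $c^*(t^*)$: any $u(t^*)<c^*(t^*)$ leaves $\dot{x}_\I(t^*)>0$, whereas $u(t^*)=c^*(t^*)$ gives $\dot{x}_\I(t^*)=0$, i.e. $R_{t^*}=1$. This settles minimality at $t^*$, and it remains to show that the \emph{constant} choice $u(t)\equiv c^*(t^*)$ keeps the epidemic non-spreading on all of $[t^*,t_1)$, which by the equivalence above amounts to proving $c^*(t)\le c^*(t^*)$ there.

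The strategy for persistence is to show $c^*$ is non-increasing on $[t^*,t_1)$ by factoring it as $c^*=x_\T\,|\phi|_+$, where $\phi(t):=\tfrac{\beta(t)}{N}x_\S(t)-\gamma$, and proving each non-negative factor is non-increasing. For $|\phi|_+$ this is unconditional: $\dot{x}_\S=-\beta x_\S x_\I/N\le0$ makes $x_\S$ non-increasing, and with $\beta$ non-increasing (hypothesis) the product $\tfrac{\beta}{N}x_\S$ is a product of non-negative non-increasing functions, hence non-increasing, so $\phi$ and therefore $|\phi|_+$ are non-increasing (with only downward jumps at the switching times of $\beta$). For $x_\T$ I would differentiate \eqref{eq:testable} in the form $x_\T=x_\I+(1-\theta)(x_\S+x_\U)$ on an interval where $\theta$ is constant, substitute \eqref{eq401a}, \eqref{eq401b}, \eqref{eq401d}, and simplify to
\[
\dot{x}_\T=x_\I\Bigl[\theta\,\phi(t)-\frac{u(t)}{x_\T(t)}\Bigr].
\]
Whenever the epidemic is non-spreading, $\phi\le u/x_\T$, so $\theta\phi\le\theta\,u/x_\T\le u/x_\T$ when $\phi\ge0$ (using $\theta\in[0,1]$) and $\theta\phi\le0\le u/x_\T$ when $\phi<0$; in both cases $\dot{x}_\T\le0$. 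Hence $x_\T$, and therefore $c^*=x_\T|\phi|_+$, is non-increasing on any subinterval where non-spreading holds.

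The delicate point, and the main obstacle, is that the inequality $\dot{x}_\T\le0$ was derived using the very non-spreading property I am trying to establish, so the two facts cannot be decoupled and must be unlocked together by a continuity/maximal-interval argument. I would set $m(t):=c^*(t^*)-c^*(t)$, so non-spreading at $t$ is equivalent to $m(t)\ge0$, and let $T:=\sup\{\tau\in[t^*,t_1):m(t)\ge0~\forall t\in[t^*,\tau]\}$; since $m(t^*)=0$ we have $T\ge t^*$. On $[t^*,T)$ non-spreading holds, so $x_\T$ and hence $c^*$ are non-increasing there, giving $c^*(t)\le c^*(t^*)$, i.e. $m(t)\ge0$, consistently. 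If $T<t_1$, continuity of the flow (with only downward jumps of $\beta,\theta$, which can only lower $c^*$ and thus help) yields $c^*(T)\le c^*(t^*)$, so non-spreading holds at $T$; then $\dot{x}_\T(T)\le0$ together with $\tfrac{d}{dt}|\phi|_+\le0$ forces $c^*$ to remain non-increasing on a right-neighborhood of $T$, so $m\ge0$ persists past $T$, contradicting the maximality of $T$. Therefore $T=t_1$ and $u(t)\equiv c^*(t^*)$ keeps the epidemic non-spreading throughout $[t^*,t_1)$, which completes the proof. I would close by noting that the monotone jumps are benign: at a switching time $\theta$ increases, so $x_\T=x_\I+(1-\theta)(x_\S+x_\U)$ jumps downward, which only reinforces $u/x_\T\ge\phi$ and prevents $c^*$ from rising.
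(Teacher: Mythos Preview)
Your proof is correct and follows the same overall architecture as the paper's: a pointwise characterization ($u(t)\ge c^*(t)$ iff $\dot x_\I(t)\le0$), followed by a maximal-interval/continuity bootstrap to show that the \emph{constant} level $c^*(t^*)$ suffices on $[t^*,t_1)$ because $c^*$ itself is non-increasing once non-spreading holds.

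The one technical difference worth noting is in how the monotonicity of $x_\T$ is obtained. You differentiate the first representation $x_\T=x_\I+(1-\theta)(x_\S+x_\U)$ and get the neat identity $\dot x_\T=x_\I\bigl[\theta\phi-u/x_\T\bigr]$, from which $\dot x_\T\le0$ follows \emph{conditionally} on non-spreading. The paper instead isolates this step as a separate lemma using the second representation $x_\T=\theta x_\I+(1-\theta)(N-x_\D-x_\R)$: since $N-x_\D-x_\R$ is \emph{unconditionally} decreasing (its derivative is $-u\,x_\I/x_\T<0$), one only needs $x_\I$ decreasing and $\theta$ non-decreasing to conclude, and the argument handles the jumps of $\theta$ algebraically without differentiating. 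Your route is more direct and yields a clean differential identity; the paper's route decouples the ingredients a bit more and makes the role of $\theta$-jumps transparent. A second minor difference: the paper first proves the strict case $u(t)>c^*(t^*)$ and then passes to $u(t)\ge c^*(t^*)$ by continuity of ODE solutions under right-hand-side perturbations, whereas you work with the non-strict inequality throughout; your way avoids that extra step. One small wording slip: you write ``downward jumps of $\beta,\theta$'', but $\theta$ is non-decreasing, so its jumps are upward --- as you correctly note at the end, this makes $x_\T$ (and hence $c^*$) jump \emph{downward}, which is what you need.
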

\begin{proof}
\mbox{}
In order to prove that $c^*(t^*)$ for $t\in [t^*,t_1)$ is the BEST at $t^*$,  we show the following:
\begin{enumerate}[(i)]
    \item If $u(t) > c^*(t)$ (resp., $u(t) \geq c^*(t)$) for any $t\in [t^*,t_1)$, then $x_\I$ is decreasing (resp., non-increasing) on $[t^*,t_1)$.
    \item If $u(t) > c^*(t^*)$ (resp., $u(t) \geq c^*(t^*)$) for any $t\in [t^*,t_1)$, then $x_\I$ is decreasing (resp., non-increasing) on $[t^*,t_1)$.
\end{enumerate}

Assume that $u(t) > c^*(t)$ on $[t^*,t_1)$.
Then, 
$
\Phi(t) := \beta(t) \frac{x_\S(t)}{N} -\frac{u(t)}{x_\T(t)} - \gamma < 0
$
which implies that $x_\I$ is decreasing since $\dot x_\I(t) = \Phi(t) x_\I(t)$ almost everywhere. 
If only the weaker assumption $u(t) \geq c^*(t)$ on $[t^*,t_1)$ is fulfilled, then, by using the continuity of the solutions of ODE with respect to perturbations of the right-hand side, one gets that $x_\I$ is non-increasing.

Assume now that $u(t) > c^*(t^*)$ on $[t^*,t_1)$, where $c^*(t^*)$ is constant. Then, by continuity, $u(t) > c^*(t)$ on a certain interval $[t^*,t_2)$, for some $t_2 \in (t^*,t_1]$. As a consequence of the result (i) shown previously, $x_\I$ decreases on $[t^*,t_2)$. Moreover, assume that $t_2$ is the maximal point in $(t^*,t_1]$ having this property. In order to show that $t_2=t_1$, it is sufficient to show that $u(t_2)> c^*(t_2)$, otherwise one may consider a larger value for $t_2$ which will lead to a contradiction with the fact that it is maximal. Since $x_\I$ decreases on $[t^*,t_2)$ and $\theta$ is non decreasing, from Lemma~\ref{lemma_x_T_decreasing} (see \ref{sec_lemma}), we can conclude that $x_\T$ also decreases on this interval. On the other hand, since $x_\S$ is always decreasing and $\beta$ is non increasing, one can conclude that $c^*(t)$ also decreases on $[t^*,t_2)$. This is obtained by upper bounding $c^*(t)$. Thus, one has $c^*(t^*) > c^*(t)$, which implies that that $u(t_2)>c^*(t_2)$.
Therefore, as $t_2=t_1$, we have established that $x_\I$ decreases on the whole interval $[t^*,t_1)$.
For the case where $u(t) \geq c^*(t^*)$, we can use the same argument of continuity of the trajectories.

From the previous results, one deduces that the BEST is given by $c^*(t^*)$ and the testing rate $u(t) \geq c^*(t^*)$ for $t\in[t^*,t_1)$.
If $u(t)<c^*(t^*)$, for $t\in[t^*,t_1)$, then one can show easily that the epidemic goes on spreading in the interval $[t^*,t_1)$. Hence, $u(t)=c^*(t^*)$ is the BEST policy at $t^*$. 
\end{proof}

\begin{algorithm}[ht!]
\begin{enumerate}
    \item Inputs: $N$, $\beta$, $\gamma$, $\theta$, $t^*$, $x_\S(t^*)$ and $x_\T(t^*)$.
    \item \label{step_return} Compute the BEST policy $c^*(t^*)$ using \eqref{c_star}.
    \item Set $u(t)=c^*(t^*)$, for all $t\geq t^*$.
    \item Return to step \ref{step_return} if $\beta$ increases or $\theta$ decreases. 
\end{enumerate}
\caption{Computation of the BEST policy at time $t^*$.}
\label{algo:peak}
\end{algorithm}

Proposition~\ref{prop:BEST} states that the peak of $x_\I(t)$ is uniquely determined by the BEST policy $c^*(t^*)$, where the peak is achieved at time $t^*$. Therefore, Algorithm~\ref{algo:peak} can be used to set the peak time $t^*$ once parameters $\beta$, $\gamma$, and $\theta$ are learned from the data.

\begin{Remark}
Requiring that $\beta$ must not increase and $\theta$ must not decrease in the interval $(t^*,t_1)$ for some $t_1>t^*$ is necessary for the BEST policy. 
It is thus important to keep the external conditions that determine the values of $\beta$ and $\theta$ either constant or such that $\beta$ decreases (e.g., through the implementation of lockdown) and/or $\theta$ increases (e.g., through efficient contact tracing).
\end{Remark}

\begin{Remark}
The case where $\beta$ decreases and/or $\theta$ increases at some time $t_1>t^*$ has the effect of speeding up the suppression of the epidemic under BEST policy.
\end{Remark}

\begin{Remark}
From \eqref{eq401b}, we can note that if $x_\S(t)/N < \gamma / \beta$, the the epidemic naturally decreases. In this case, doing no testing $u(t)=0$ is the BEST policy, which, by definition, gives a minimum number of tests to be performed in order to stop the growth of the infected population $x_\I$. However, if testing is resumed in this case, i.e., $u(t)>0$, it will further speed up the decrease of the infected population.
\end{Remark}

\subsubsection{Evaluation of the BEST policy}
Giving data from France, we first compute $c^*(t^*)$ for different values of $t^*$ from January~24 to March~13. Figure~\ref{fig:with-BEST-2} shows the number of tests per day required by the BEST policy if it is implemented on day $k$ and the corresponding value of peak of infected cases $x_\I(k^*)$. One can note that the later BEST is applied the higher is the required number of tests. An exponential increase can even be observed from February 28 which corresponds to an acceleration of the infection.  
\begin{figure}[!htb]
    \centering
    \includegraphics[width=0.9\textwidth]{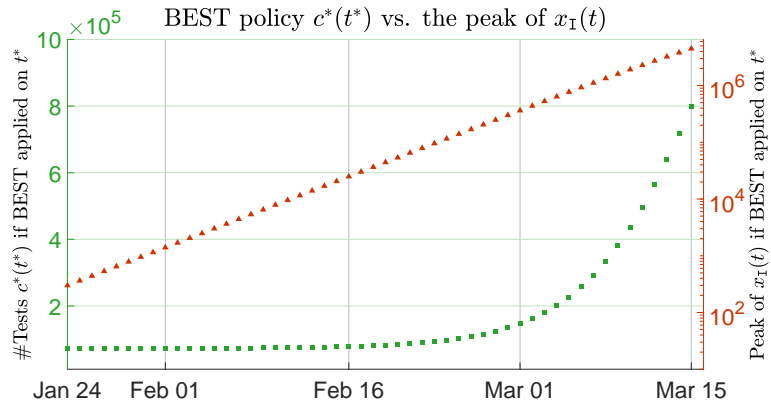}
    \caption{Number of tests per day required by the BEST policy (left y-axis, green) vs. peak of infection (right y-axis, red, in logscale) for an implementation day $t^*$.}
    \label{fig:with-BEST-2}
\end{figure}
Now, we consider a scenario where BEST is implemented on March~01. Figure~\ref{fig:with-BEST} depict the number of active cases when $u(t)$ is the actual testing scenario (see Figure~\ref{fig:input_plot}) and when $u(t)$ is given by BEST. To evaluate BEST, we use $u(t)$ as given by recorded data before March~01 then use $u(t)$ given by BEST from March~01. In the first case, the peak of the infected population $x_\I(t)$, which are the active undiagnosed cases, is about $6$ million. In the second case, the peak of infected population in this case is $363,169$. The required number of tests per day to be performed for the implementation of BEST on March~01 is $c^*\approx 147,000$.

\begin{figure}[!htb]
    \centering
    \includegraphics[width=0.9\textwidth]{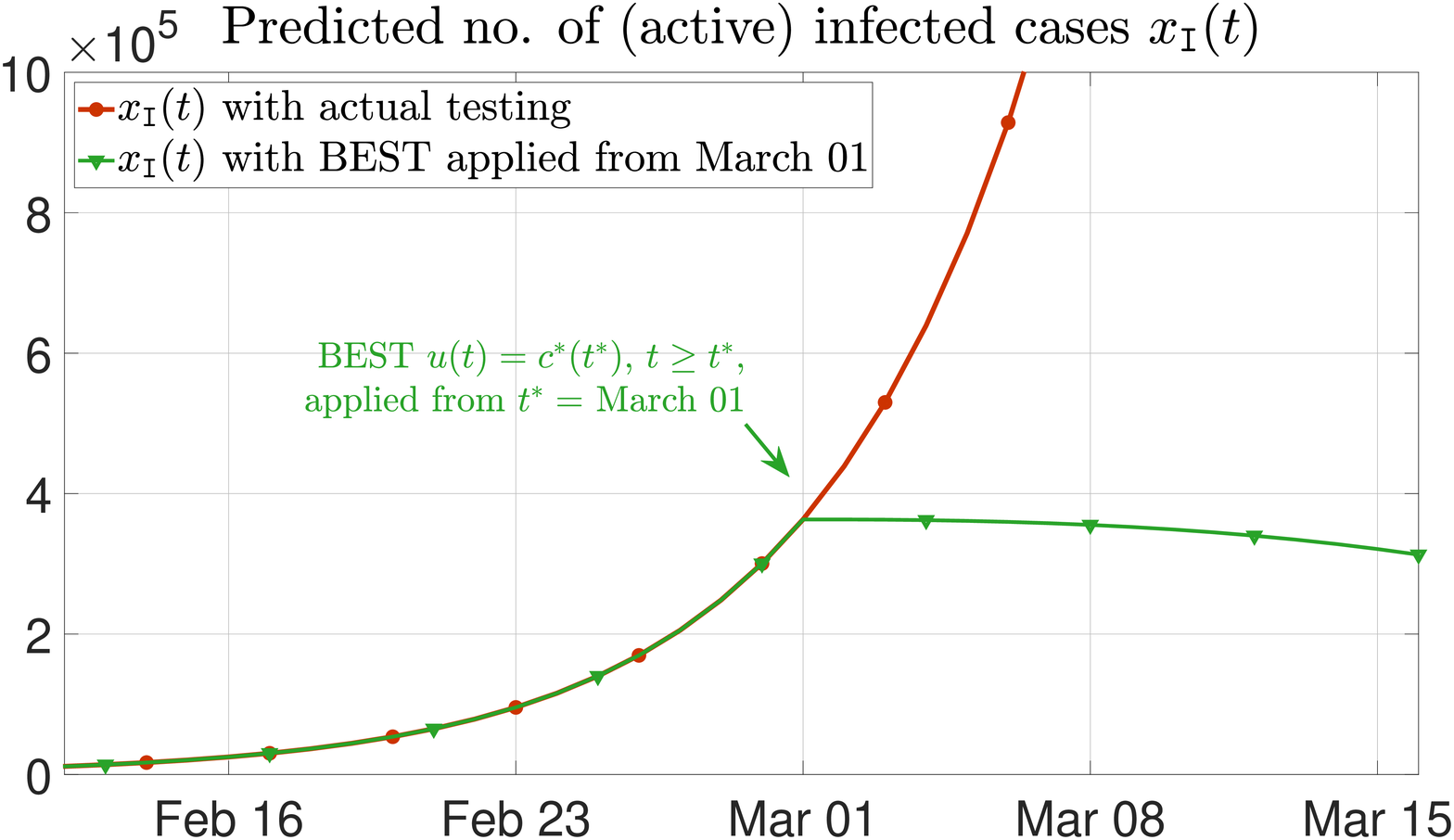}
    \caption{Predicted number of infected cases $x_\I(t)$: actual testing scenario vs. BEST}
    \label{fig:with-BEST}
\end{figure}


The impact in terms of ICU occupation and number of deaths is now evaluated using the equations \eqref{eq:icu-infection} and \eqref{eq:death-infection} respectively. 
The results are illustrated in Figure~\ref{fig:ICU-BEST} and \ref{fig:Death-BEST}. 
\begin{figure}[!htb]
    \centering
    \includegraphics[width=0.9\textwidth]{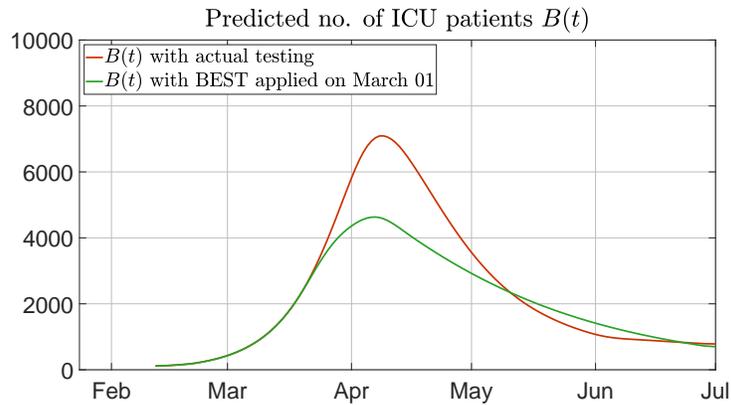}
    \caption{The prediction of the number of active ICU cases $B(t)$: actual scenario vs. BEST policy.}
    \label{fig:ICU-BEST}
\end{figure}
\begin{figure}[!htb]
    \centering
    \includegraphics[width=0.9\textwidth]{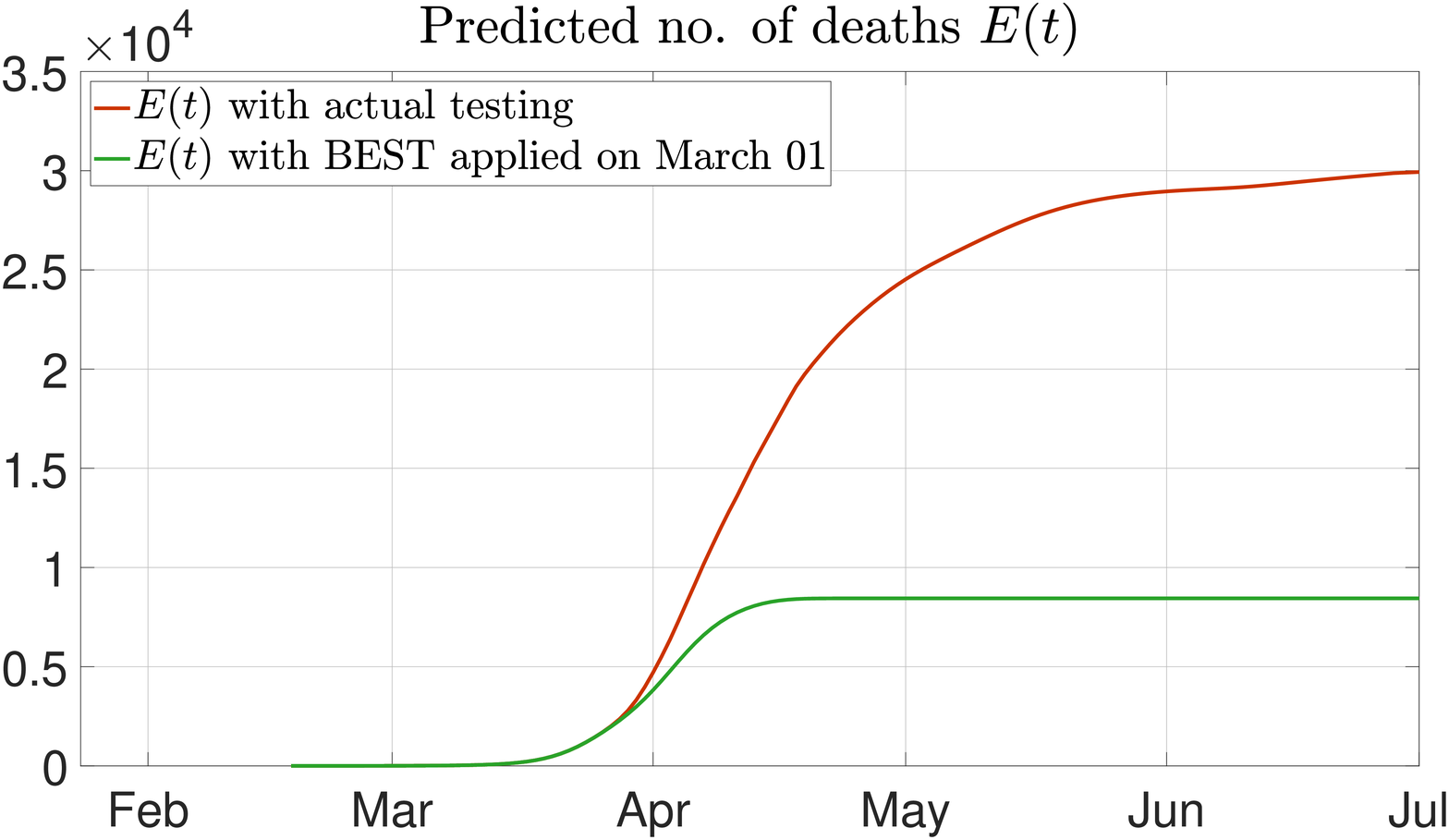}
    \caption{The prediction of the cumulative number of deaths $E(t)$: actual scenario vs. BEST policy.}
    \label{fig:Death-BEST}
\end{figure}

We observe that the peak of the number of active ICU patients could have been reduced by $34.71\%$ and the number of deaths could have been reduced by $74.45\%$ if the BEST policy was applied from March 01, 2020.


\subsection{Constant optimal strategy for testing}
\label{sec_optimal_testing}

Consider now that the total stockpile of tests is limited and given by $r_{\max}$.
In this case, the testing rate should be chosen carefully so as to not finish the stockpile of tests too early by performing too many tests per day, or to be unable to control the epidemic spread by performing too few tests per day. In the former case, when the stockpile of tests finishes too early due to intensive testing, the infections, even if significantly reduced in the beginning, will start to spread again and result in a second wave of the epidemic with an infection peak much higher than before. On the other hand, if the number of tests performed per day is too little, this would result in the infected population reaching a very high peak during the first wave of the epidemic, which could challenge the available medical facilities of a country. Thus, in this section, we determine a constant optimal allocation of a limited stockpile of tests $r_{\max}$ such that both peaks of the infected population are minimized.

Given the total stockpile of tests $r_{\max}$,  we assume that the number of  tests performed per day is given by
\begin{equation} \label{u-COST}
    u(t) = \left\{ \begin{array}{ll}
          C,    & \text{if}~ 0 \leq t \leq  T \\
          0, & \text{if}~ t>T
    \end{array} \right.
\end{equation}
where the time period $T:=r_{\max} / C$ represent the duration of the testing policy once $C$ is determined.

\begin{Definition}[COST]
The constant optimal strategy for testing (COST) with the total stockpile of tests $r_{\max}$ is the policy of class \eqref{u-COST} that minimizes the peak of the infected population $x_\I(t)$.
\end{Definition}

In other words, the COST policy allocates the limited stockpile of tests $r_{\max}$ as $C$ tests per day for the time interval $[0,T]$, where $T$ is measured in days, such that the maximum value of the infected population $x_\I(t)$ is minimized.

In the following, we first study the SIDUR model in a new coordinate of `infection' time $\xi$ with the aim of finding analytic solutions. Then, we use those solutions to compute the two peak values of $x_\I(t)$, where the first peak arrives at $t\leq T$ and the second at $t>T$. Finally, we compute the optimal value of $C$ that minimizes those peak values by equating them, which is depicted in Figure~\ref{fig:two_peaks}.

\begin{figure}[!htb]
    \centering
    \includegraphics[width=0.9\textwidth]{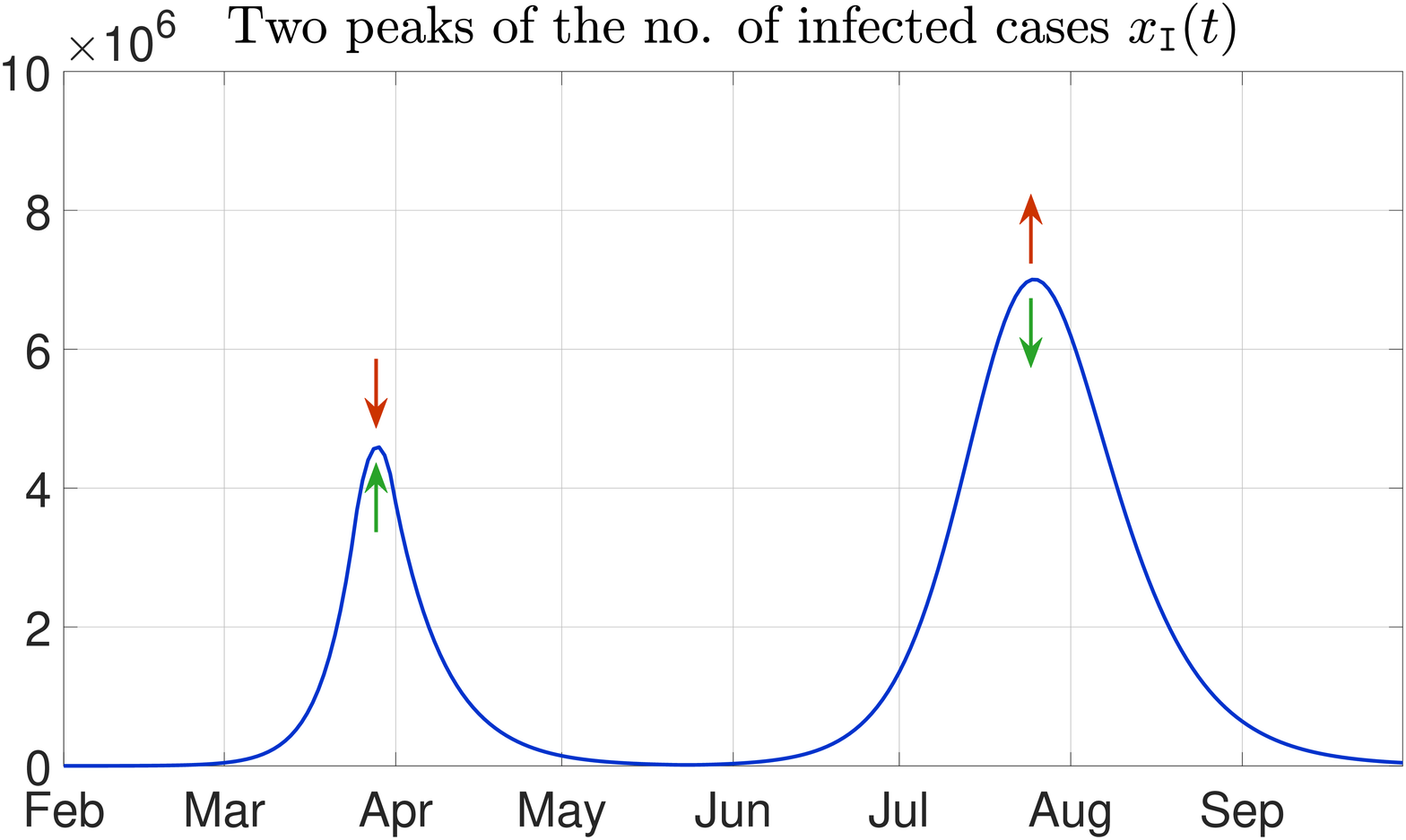}
    \caption{Under the limited stockpile of tests $r_{\max}$, the number of infected cases $x_\I(t)$ will have two peaks. The first peak happens before the stockpile of tests is finished and the second peak happens after the stockpile finishes. The second peak is the result of the second wave of the epidemic during which no testing is done. If the testing policy is chosen such that the first (respectively, second) peak is reduced, then the second (respectively, first) peak will increase. The minimum of both peaks is achieved when the peak values of both waves of the epidemic are equal. This is ensured by COST policy $C$.}
    \label{fig:two_peaks}
\end{figure}

\subsubsection{Partial solution to the SIDUR model}


The SIDUR model has an interesting property: most of the terms on the right-hand side of the model \eqref{eq401} depend linearly on $x_\I(t)$. This gives an intuition that the epidemic process goes faster when the number of infected people $x_\I$ is high.
Indeed, this idea was used  very early  to analyze the basic SIR model, see \cite{kermack1927}. It is possible to define a new time variable $\xi$, which we call {\em infection time}, as an integral of undetected infected people up to time $t$, i.e., 
\[
d\xi = x_\I(t) dt.
\]
This means exactly that the speed of the processes in the new infection time is proportional to $x_\I$.
The `real' time can be reconstructed from the infection time by:
\begin{equation} \label{time_xi}
t(\xi) = \int\limits_0^\xi \frac{d\xi'}{x_\I(\xi')}.
\end{equation}
Note that the infection timescale preserves the peak value of epidemic, which is of interest to us. That is,
$$
\frac{dx_\I}{dt} = \frac{dx_\I}{d\xi} \frac{d\xi}{dt} = \frac{dx_\I}{d\xi} x_\I
$$
and $dx_\I/dt = 0$ implies $dx_\I/d\xi = 0$ and vice versa (as long as $x_\I \neq 0$). Therefore, one can perform a model analysis based on the infection timescale $\xi$. Moreover, by writing the equations in $\xi$, the partial solution of SIDUR model can be obtained analytically. 

\begin{Proposition} Evolution of the unidentified recovered population $x_\U$ is affine with respect to the infection time $\xi$, whereas the susceptible population $x_\S$ is exponential with respect to $\xi$. That is,
\begin{eqnarray}
\label{x_R_sol}
x_\U(\xi) &=& x_\U(0) + \gamma\xi=\gamma\xi \\ \label{x_S_sol}
x_\S(\xi)&=& x_\S(0)e^{-\frac{\beta}{N}\xi}.
\end{eqnarray}
\end{Proposition}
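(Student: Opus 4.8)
The plan is to exploit the change of time variable $d\xi = x_\I(t)\,dt$ directly in the two model equations that govern $x_\U$ and $x_\S$. The crucial observation is that for any differentiable state $x$, the chain rule gives $\frac{dx}{dt} = \frac{dx}{d\xi}\,\frac{d\xi}{dt} = \frac{dx}{d\xi}\,x_\I$, so that wherever $x_\I > 0$ one has $\frac{dx}{d\xi} = \frac{1}{x_\I}\,\frac{dx}{dt}$. This is exactly the relation already recorded in the excerpt for $x_\I$ itself, and the point is that it converts the $x_\I$-linearity of the right-hand sides of \eqref{eq401a} and \eqref{eq401d} into equations carrying no explicit $x_\I$ dependence.

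First I would treat $x_\U$. Substituting $\dot x_\U = \gamma x_\I$ from \eqref{eq401d} into the chain-rule relation, the factor $x_\I$ cancels and leaves $\frac{dx_\U}{d\xi} = \gamma$, a constant. Integrating from $0$ to $\xi$ gives $x_\U(\xi) = x_\U(0) + \gamma\xi$, which is affine. Since we consider the beginning phase of the epidemic, no one has yet recovered undiagnosed, so $x_\U(0) = 0$, yielding $x_\U(\xi) = \gamma\xi$ as claimed in \eqref{x_R_sol}. Next I would treat $x_\S$. Substituting $\dot x_\S = -\frac{\beta}{N}\,x_\S\,x_\I$ from \eqref{eq401a} into the same relation, the factor $x_\I$ again cancels, leaving the linear, constant-coefficient ODE $\frac{dx_\S}{d\xi} = -\frac{\beta}{N}\,x_\S$ (here $\beta$ is constant on the interval of interest, consistent with the piecewise-constant assumption). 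Its solution with initial value $x_\S(0)$ is the exponential $x_\S(\xi) = x_\S(0)\,e^{-\frac{\beta}{N}\xi}$, which is \eqref{x_S_sol}.

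I do not expect any genuine obstacle here; the computation is immediate once the time change is applied, and this is precisely the feature of the SIDUR model highlighted just before the statement, namely that the dominant terms are linear in $x_\I$. The only point requiring care is the validity of the substitution where $x_\I$ could vanish: as long as the epidemic is ongoing one has $x_\I > 0$, so $\xi$ is a strictly increasing, invertible function of $t$ via \eqref{time_xi} and the reparametrized ODEs are well posed. One should also flag that $x_\U(0) = 0$ is an assumption on the initial data at the onset of the epidemic, which is exactly what turns the generic affine formula $x_\U(0) + \gamma\xi$ into the pure linear form $\gamma\xi$.
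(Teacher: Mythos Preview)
Your proposal is correct and follows essentially the same approach as the paper: apply the time change $d\xi = x_\I\,dt$ to \eqref{eq401d} and \eqref{eq401a}, cancel the common factor $x_\I$, and integrate the resulting constant and linear ODEs. The paper's own proof is in fact terser than yours, merely asserting the result for $x_\U$ and writing out only the transformed equation $\frac{dx_\S}{d\xi} = -\frac{\beta}{N}x_\S$ for the susceptible population.
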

\begin{proof}
It is straightforward to see that the solution of \eqref{eq401d} is given by \eqref{x_R_sol}. On the other hand, \eqref{eq401a} yields 
\[
\frac{dx_\S(t)}{d\xi}=-\frac{\beta}{N}x_\S(t)
\] 
whose solution is given by \eqref{x_S_sol}.
\end{proof}
It is not so trivial to find analytic solutions for $x_\I$, $x_\D$, and $x_\R$. However, under certain hypothesis on the testable population, it is possible to get a reasonable approximation of the solutions. 
\begin{Assumption} \label{ass: simplification_of_X_T}
The testable population $x_\T$ is approximated as
\begin{equation} \label{eq:testable_approx}
x_\T(t) \approx (1-\theta) N.
\end{equation}
\end{Assumption}

 This makes it possible to find an analytic solution for the infected population $x_\I(t)$.

\begin{Proposition} Evolution of the infected population $x_\I$ under Assumption~\ref{ass: simplification_of_X_T} is given by
\begin{equation}\label{x_I_sol_general}
x_\I(\xi) = x_\I(0) + x_\S(0) \left(1 - e^{-\frac{\beta}{N}\xi} \right) - \frac{1}{(1-\theta)N} \int\limits_0^\xi u(s) ds - \gamma \xi .
\end{equation}
\end{Proposition}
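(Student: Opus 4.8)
The plan is to transform the differential equation \eqref{eq401b} for $x_\I$ into the infection-time coordinate $\xi$ and then integrate directly, exactly as was done for $x_\S$ in the previous proposition. First I would start from
\[
\dot{x}_\I(t) = \beta x_\S(t)\frac{x_\I(t)}{N} - u(t)\frac{x_\I(t)}{x_\T(t)} - \gamma x_\I(t)
\]
and apply the chain rule $\frac{dx_\I}{dt} = \frac{dx_\I}{d\xi}\frac{d\xi}{dt} = \frac{dx_\I}{d\xi}\,x_\I$, which is just the defining relation $d\xi = x_\I\,dt$ already used in the text. Substituting and cancelling the common factor $x_\I$ (legitimate on any interval where $x_\I \neq 0$, precisely the restriction already noted when the timescale was introduced) reduces the equation to the scalar form
\[
\frac{dx_\I}{d\xi} = \frac{\beta}{N}\,x_\S(\xi) - \frac{u}{x_\T(\xi)} - \gamma.
\]

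Next I would invoke Assumption~\ref{ass: simplification_of_X_T}, replacing $x_\T(\xi)$ by $(1-\theta)N$, and substitute the closed form $x_\S(\xi) = x_\S(0)e^{-\frac{\beta}{N}\xi}$ from \eqref{x_S_sol}. This makes the right-hand side an explicit function of $\xi$, so the solution follows from a single integration from $0$ to $\xi$:
\[
x_\I(\xi) - x_\I(0) = \int_0^\xi\!\left[\frac{\beta}{N}x_\S(0)e^{-\frac{\beta}{N}s} - \frac{u(s)}{(1-\theta)N} - \gamma\right]ds.
\]
The first term integrates to $x_\S(0)\bigl(1 - e^{-\frac{\beta}{N}\xi}\bigr)$, the constant term contributes $-\gamma\xi$, and the control term contributes $-\frac{1}{(1-\theta)N}\int_0^\xi u(s)\,ds$; collecting these reproduces \eqref{x_I_sol_general} verbatim.

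The one point demanding care — and essentially the only genuine obstacle — is the meaning of $u(s)$ inside the integral. The control $u$ is originally a function of real time, so $\int_0^\xi u(s)\,ds$ must be read as $u$ evaluated along the time reparametrization $t(s)$ of \eqref{time_xi}; I would make this interpretation explicit so that the formula is unambiguous. I would also flag that the result is only as accurate as Assumption~\ref{ass: simplification_of_X_T}: the exact equation carries $x_\T(\xi)$ in the denominator of the control term, and the stated closed form is precisely the approximation obtained by freezing $x_\T$ at $(1-\theta)N$. Everything else is an elementary exact integration.
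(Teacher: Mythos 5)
Your proposal is correct and follows exactly the route the paper takes: transform \eqref{eq401b} to the infection-time coordinate via $d\xi = x_\I\,dt$, cancel the common factor $x_\I$, substitute Assumption~\ref{ass: simplification_of_X_T} and the closed form \eqref{x_S_sol} for $x_\S$, and integrate. Your added remarks — that the cancellation requires $x_\I \neq 0$ and that $u(s)$ in the integral means $u$ composed with the reparametrization $t(s)$ of \eqref{time_xi} — are sound clarifications of details the paper's one-line proof leaves implicit, not a departure from it.
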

\begin{proof}
Equation \eqref{x_I_sol_general} can be directly obtained by integrating \eqref{eq401b}, if one first transforms it using the infection time and then substitutes the definition for $x_\T$ from Assumption~\ref{ass: simplification_of_X_T} and the solution for $x_\S$ from \eqref{x_S_sol}.
\end{proof}

Using \eqref{u-COST}, we can now write a complete evolution of the infected population:
\begin{equation}\label{x_I_sol}
x_\I(\xi) = \left\{
\begin{aligned}
&x_\I(0) + x_\S(0) \left(1 - e^{-\frac{\beta}{N}\xi} \right) - \frac{ C \xi}{(1-\theta)N} - \gamma \xi, \quad \xi < \xi^* \\
&x_\I(0) + x_\S(0) \left(1 - e^{-\frac{\beta}{N}\xi} \right) - \frac{ C \xi^*}{(1-\theta)N} - \gamma \xi, \quad \xi \ge \xi^*
\end{aligned} \right.
\end{equation}
where $\xi^*$ denotes the moment of stopping the testing in the infection time coordinates, which by definition of the infection time is given by an implicit formula
\begin{equation}\label{xi_star_def}
\int\limits_0^{\xi^*} \frac{d\xi}{x_\I(0) + x_\S(0) \left(1 - e^{-\frac{\beta}{N}\xi} \right) - \frac{ C \xi}{(1-\theta)N} - \gamma \xi} = T =  \frac{r_{\max}}{C}
\end{equation}
derived from the relation
$$
\int\limits_0^{\xi^*} \frac{d\xi}{x_\I(\xi)} = T
$$
where we substitute the solution for $x_\I(\xi)$ given by \eqref{x_I_sol_general}, and the fact that $u(t) = C$ during the testing interval $[0, T]$.

\subsubsection{Analysis of the evolution of infection peaks}
From the definition of the effective reproductive number \eqref{eq:eff_R}, for convenience, we can introduce two reproduction numbers at the origin: $R_C$ with the COST policy $C$ and $R_W$ without testing: 
\begin{equation} \label{COST_reproduction}
R_C = \frac{x_\S(0) \beta}{\frac{C}{1-\theta} + \gamma N}, \quad
R_W = \frac{x_\S(0) \beta}{\gamma N}, \quad R_W>R_C.
\end{equation}
In general the evolution of the infected population given by \eqref{x_I_sol} can have at most two peaks, one during testing and one after stopping. To have guarantees that the peaks exist, we further state the following assumption:

\begin{Assumption}  \label{ass: existence_of_peaks}
The COST policy $C$ does not suppress the epidemic at the origin, i.e., $R_C>1$.
\end{Assumption} 

To explain this assumption, suppose in contrary that the COST policy $C$ suppresses the epidemic at the origin. However, such a policy will not develop the herd immunity, and, in the case of the limited stockpile of tests $r_{\max}$, the second peak will arrive once the testing is stopped. Therefore, it is reasonable to develop at least a partial herd immunity using the controlled number of tests $C$ in the first wave of epidemic.

When the two peaks of the infected population exist, their values can be obtained by setting the derivatives of \eqref{x_I_sol} with respect to $\xi$ to zero:
\begin{equation}
\begin{aligned}
\frac{dx_{\I,peak1}}{d\xi} &= x_\S(0)\frac{\beta}{N} e^{-\frac{\beta}{N}\xi_{peak1}} - \frac{C}{(1-\theta)N} - \gamma = 0 \\
\frac{dx_{\I,peak2}}{d\xi} &= x_\S(0)\frac{\beta}{N} e^{-\frac{\beta}{N}\xi_{peak2}} - \gamma = 0
\end{aligned}
\end{equation}
where the peaks positions in the infection time $\xi$ are given as:
\begin{equation}
\label{eq888_1}
\xi_{peak1} = \frac{N}{\beta} \ln \frac{x_\S(0) \beta}{\frac{C}{1-\theta} + \gamma N}, \qquad
\xi_{peak2} = \frac{N}{\beta} \ln \frac{x_\S(0) \beta}{\gamma N}.
\end{equation}
Writing them in terms of reproduction numbers $R_C$ and $R_W$, we obtain  
\begin{equation}
\label{eq888}
\xi_{peak1} = \frac{N}{\beta} \ln R_C, \qquad
\xi_{peak2} = \frac{N}{\beta} \ln R_W.
\end{equation}
Since $R_W>R_C>1$, by Assumption \ref{ass: existence_of_peaks}, we get  $\xi_{peak2} > \xi_{peak1} > 0$. Further one should notice that the peak $\xi_{peak1}$ occurs only if $\xi_{peak1} < \xi^*$, otherwise it is ill-defined. Similarly, $\xi_{peak2} > \xi^*$.

The peaks values themselves are given by
\begin{equation}\label{x_I_two_peaks}
\begin{aligned}
x_{\I,peak1} &= x_\I(0) + x_\S(0) \left(1 - \frac{1}{R_C} \right) - x_\S(0) R_C \ln R_C, \\
x_{\I,peak2} &= x_\I(0) + x_\S(0) \left(1 - \frac{1}{R_W} \right) - x_\S(0) R_W \ln R_W - \frac{C \xi^*}{(1-\theta)N},
\end{aligned}
\end{equation}

It is obvious that $x_{\I,peak1}$ decreases as $C$ increases, because $x_{\I,peak1}$ represents a peak under still performed testing. 

Further, it is possible to prove that $C\xi^*$ decreases as $C$ increases  for all sufficiently large $C$. Indeed, \eqref{xi_star_def}
can be rewritten as 
$$
C \int\limits_0^{\xi^*} \frac{d\xi}{x_\I(\xi)} = r_{\max}.
$$
Taking the derivative with respect to $\xi^*$ and substituting again \eqref{xi_star_def} in place of the integral, we see that
$$
\frac{dC}{d\xi^*} \frac{r_{\max}}{C} + C \frac{1}{x_{\I}(\xi^*)} = 0,
$$
which is the same as 
$$
\frac{d\xi^*}{dC} = -\frac{r_{\max} x_{\I}(\xi^*)}{C^2}.
$$
Therefore,
$$
\frac{d(C\xi^*)}{dC} = \xi^* - \frac{r_{\max} x_{\I}(\xi^*)}{C} = \int\limits_0^{T} \left[ x_\I(\tau) - x_\I(T) \right] d\tau,
$$
where $\xi^*$ is rewritten by the definition of the infection time \eqref{time_xi}, while $r_{\max}/C$ is represented by the integral of 1 over the time $T = r_{\max}/C$. The result is negative unless $x_\I(T)$ is smaller than an average number of infected people all the time before, which means that $T$ is long after the peak has come. Thus for all reasonable sufficiently large $C$ such that $T$ is not too large $C$, $x_{\I,peak1}$ decreases, while $x_{\I,peak2}$ increases.

This property can be used to optimize a maximum between two peaks. Indeed, a maximum between a decreasing and an increasing function is minimized when they are equal each other. Thus, we equate two peak values in \eqref{x_I_two_peaks} and solve it with respect to $\xi^*$, obtaining an optimality condition as
\begin{equation}\label{xi_optimality}
\xi^*  = \frac{N}{\beta} \left( 1 + \ln R_C - \frac{R_C}{R_W - R_C} \ln \frac{R_W}{R_C} \right).
\end{equation}
Equation \eqref{xi_optimality} provides an optimality condition which, being combined with the connection between $\xi^*$ and $C$ given by \eqref{xi_star_def}, constitutes a complete system determining the COST $C$.

It is possible to show that the optimal strategy provides switching in between of two peaks $\xi_{peak1}$ and $\xi_{peak2}$, thus both peaks exist. Indeed, first we prove a simple lemma:
\begin{Lemma} \label{lemma:log_x}
For any $x \ge 1$
$$
x - 1 \ge \ln x \ge 1 - \frac{1}{x},
$$
and the equality is possible only if $x = 1$.
\end{Lemma}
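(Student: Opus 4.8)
The plan is to establish the two inequalities separately by the standard monotonicity argument for the logarithm, since both are really manifestations of the single elementary bound $\ln x \le x-1$, which in fact holds for all $x>0$. First I would prove the upper bound, then obtain the lower bound cheaply from it by the substitution $x \mapsto 1/x$; alternatively one can run an independent first-derivative test for each side.

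For the upper bound $x-1 \ge \ln x$, I would introduce the auxiliary function $f(x) := x - 1 - \ln x$ on $[1,\infty)$. One checks immediately that $f(1)=0$ and that $f'(x) = 1 - \frac{1}{x} = \frac{x-1}{x}$, which is nonnegative on $[1,\infty)$ and strictly positive for $x>1$. Hence $f$ is nondecreasing, so $f(x) \ge f(1) = 0$, with equality exactly when $x=1$. Note that the same sign analysis of $f'$ shows $f$ attains a global minimum at $x=1$, so in fact $f(y)\ge 0$ for every $y>0$, a fact I will reuse.

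For the lower bound $\ln x \ge 1 - \frac{1}{x}$, the cleanest route is to apply the bound $f(y)\ge 0$ at $y = 1/x$ (legitimate for $x\ge 1$, which forces $y>0$): this gives $\frac{1}{x} - 1 \ge \ln\frac{1}{x} = -\ln x$, i.e. $\ln x \ge 1 - \frac{1}{x}$, again with equality only at $x=1$. If one prefers a self-contained derivation, set $g(x) := \ln x - 1 + \frac{1}{x}$ and verify $g(1)=0$ together with $g'(x) = \frac{1}{x} - \frac{1}{x^2} = \frac{x-1}{x^2} \ge 0$ on $[1,\infty)$, which yields the same conclusion.

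There is no genuine obstacle here; the whole statement reduces to a routine first-derivative test. The only point deserving slight care is the equality claim, which follows because $f'$ and $g'$ vanish only at $x=1$ and are strictly positive on $(1,\infty)$, so the monotonicity is strict there and equality in both bounds can occur solely at $x=1$.
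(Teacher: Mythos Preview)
Your proof is correct and follows essentially the same route as the paper: both arguments verify that all three expressions agree at $x=1$ and then compare derivatives ($1 \ge 1/x \ge 1/x^2$ for $x\ge 1$) to conclude via monotonicity. Your version is slightly more explicit in packaging the differences as auxiliary functions $f$ and $g$, and the substitution $x\mapsto 1/x$ is a nice shortcut the paper does not mention, but the underlying idea is the same first-derivative test.
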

\begin{proof}
First of all, it is obvious that for $x = 1$ all three parts of the inequality are equal to zero. Further, let us take the derivative of this inequality:
$$
1 \ge \frac{1}{x} \ge \frac{1}{x^2},
$$
which is always true for all $x \ge 1$, and it holds strictly for all $x > 1$. This concludes the proof.
\end{proof}
Now, consider the difference $\xi^* - \xi_{peak1}$:
$$
\begin{aligned}
\frac{\beta}{N}\left(\xi^* - \xi_{peak1} \right) &= 1 - \frac{R_C}{R_W - R_C} \ln \frac{R_W}{R_C} \\
&= 1 - \frac{1}{x - 1} \ln x \\ &= \frac{1}{x - 1} \left(x - 1 - \ln x\right) \ge 0,
\end{aligned}
$$
where we denote $x = R_W / R_C$, and the last inequality comes from Lemma~\ref{lemma:log_x}. Thus the first peak is well-defined. Further, writing the difference $\xi_{peak2} - \xi^*$ in the same manner and using the same definition for $x$, we see that
$$
\begin{aligned}
\frac{\beta}{N}\left(\xi_{peak2} - \xi^* \right) &= \left(\frac{R_C}{R_W - R_C} + 1\right) \ln \frac{R_W}{R_C}  - 1 \\
&= \frac{x}{x-1}\ln x - 1 \\ &= \frac{x}{x-1} \left(\ln x - 1 + \frac{1}{x}\right) \ge 0.
\end{aligned}
$$
Thus we have proven that $0 < \xi_{peak1} \le \xi^* \le \xi_{peak2}$, which means the stopping of testing happens in between of the two peaks.

\subsubsection{Computation of the COST policy}

Finally, the COST policy can be obtained by solving \eqref{xi_star_def} with respect to $C$, where \eqref{xi_optimality} are used as an upper limit of the integral. This can be formally stated as follows:

\begin{Proposition}
Consider the testing rate defined by \eqref{u-COST}. Then, under Assumptions~\ref{ass: simplification_of_X_T}~and~\ref{ass: existence_of_peaks}, the COST policy $C$ is given by solving \eqref{xi_star_def}--\eqref{xi_optimality}, i.e.,
\begin{eqnarray}
\label{eq_COST_solution_1}
 0 & =& \frac{r_{\max}}{C}  -\int\limits_0^{\xi^*} \frac{d\xi}{x_\I(0) + x_\S(0) \left(1 - e^{-\frac{\beta}{N}\xi} \right) - \frac{C \xi}{(1-\theta)N} - \gamma \xi} \\
 \label{eq_COST_solution_2}
\xi^*  &=& \frac{N}{\beta} \left(1 + \ln \frac{x_\S(0) \beta}{\frac{C}{1-\theta} + \gamma N} \right) -  \frac{(1-\theta) \gamma N^2}{C \beta} \ln \frac{\frac{C}{1-\theta} + \gamma N}{\gamma N}.
\end{eqnarray}
\end{Proposition}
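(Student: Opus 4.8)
The plan is to treat this final Proposition as the assembly of the analysis already carried out in the two preceding subsections, so that the proof is mostly a matter of collecting established facts and performing the closing algebraic rearrangement. First I would recall that, by definition, the COST policy is the constant $C$ (equivalently the duration $T=r_{\max}/C$) minimizing the peak of $x_\I(t)$, and that under Assumption~\ref{ass: simplification_of_X_T} the infected trajectory is given explicitly in the infection time $\xi$ by \eqref{x_I_sol}. Differentiating \eqref{x_I_sol} shows that $x_\I(\xi)$ is unimodal on $[0,\xi^*)$ and on $[\xi^*,\infty)$, with interior critical points only at $\xi_{peak1}$ and $\xi_{peak2}$ from \eqref{eq888}; hence the global infection peak equals $\max\{x_{\I,peak1},x_{\I,peak2}\}$, with the two candidate values given by \eqref{x_I_two_peaks}. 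The already-proven chain $0<\xi_{peak1}\le\xi^*\le\xi_{peak2}$ guarantees that both peaks genuinely occur in their respective regimes, so neither candidate is spurious.

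Next I would invoke the monotonicity established just above: $x_{\I,peak1}$ is decreasing in $C$ (more testing lowers the first wave), while $x_{\I,peak2}$ is increasing in $C$ through the term $C\xi^*/((1-\theta)N)$, since $C\xi^*$ was shown to decrease in $C$ in the relevant range. The minimum over $C$ of the maximum of a decreasing and an increasing function is attained exactly where the two curves cross, i.e. at the value of $C$ for which $x_{\I,peak1}=x_{\I,peak2}$. This crossing is the optimality condition that pins down the COST policy.

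The remaining work is purely algebraic. Equating the two expressions in \eqref{x_I_two_peaks} and solving for $\xi^*$ yields the optimality relation \eqref{xi_optimality}; substituting the definitions of $R_C$ and $R_W$ from \eqref{COST_reproduction} in terms of $C$ (in particular $\tfrac{R_C}{R_W-R_C}=\tfrac{(1-\theta)\gamma N}{C}$ together with the logarithmic factors) turns \eqref{xi_optimality} into \eqref{eq_COST_solution_2}. Coupling this with the implicit definition of $\xi^*$ from \eqref{xi_star_def}, rewritten as \eqref{eq_COST_solution_1}, produces a closed system of two transcendental equations in the two unknowns $C$ and $\xi^*$, which is exactly the claimed characterization.

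The main obstacle I anticipate is not the algebra but justifying the min-max step rigorously. The monotonicity of $x_{\I,peak2}$ in $C$ was argued only ``for sufficiently large $C$'', namely where $\int_0^T[x_\I(\tau)-x_\I(T)]\,d\tau<0$, so one must verify that the crossing of the two peak curves actually falls in this admissible range, and that $x_{\I,peak1}$ and $x_{\I,peak2}$ cross exactly once, so that the equated solution is the unique global minimizer rather than a local one. I would also want to state explicitly that the global peak is never dominated by a boundary or switching value rather than by one of the two interior maxima (which follows from the piecewise unimodality), and ideally establish existence and uniqueness of a solution to the coupled system \eqref{eq_COST_solution_1}--\eqref{eq_COST_solution_2} to complete the argument.
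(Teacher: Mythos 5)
Your proposal is correct and follows essentially the same route as the paper: the paper gives no separate proof of this Proposition, but justifies it ``by construction'' from exactly the chain you assemble --- the infection-time solution \eqref{x_I_sol} under Assumption~\ref{ass: simplification_of_X_T}, the two-peak analysis with positions \eqref{eq888} and values \eqref{x_I_two_peaks}, the decreasing/increasing monotonicity in $C$, the equal-peaks optimality condition \eqref{xi_optimality}, and the substitution $\tfrac{R_C}{R_W-R_C}=\tfrac{(1-\theta)\gamma N}{C}$ that turns it into \eqref{eq_COST_solution_2} coupled with \eqref{eq_COST_solution_1}. The rigor gaps you flag (the ``sufficiently large $C$'' monotonicity, single crossing, and existence/uniqueness of the solution of the coupled system) are precisely the points the paper itself leaves unproved, asserting them only informally.
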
 
The time duration for positive testing rate \eqref{u-COST} is given by $T = r_{\max} / C$. Note that, by construction, a solution to the system of equations \eqref{eq_COST_solution_1} and \eqref{eq_COST_solution_2} exists and is unique as long as Assumption~\ref{ass: existence_of_peaks} holds. This system of equations can be solved numerically for $C$ by using the Newton's method (if the initial estimate $C_0$ is chosen sufficiently close to the real optimal value $C$). The process is sketched in Algorithm~\ref{algo:COST}. 

\begin{algorithm}
\begin{enumerate}
    \item Take the initial estimate for COST $C_0$.
    \item On $n$-th iteration step, given $C_n$, find $\xi^*_n$ by \eqref{eq_COST_solution_2}.
    \item Find the value of the right-hand side of \eqref{eq_COST_solution_1}. This can be done by computing the integral numerically. Denote the result as $f(C_n)$.
    \item Find the derivative $f'(C_n)$ of the right-hand side of \eqref{eq_COST_solution_1}. This can be done analytically:
    $$
    f'(C_n) = -\frac{r_{\max}}{C^2_n} - \frac{1}{x_\I(\xi^*_n)} \frac{d\xi^*_n}{dC_n}
    $$
    where $x_\I(\xi^*_n)$ is computed from \eqref{x_I_sol} and, from \eqref{eq_COST_solution_2},
    $$
    \frac{d\xi^*_n}{dC_n} = -\frac{N}{\beta C_n} + \frac{(1-\theta) \gamma N^2}{C^2_n \beta} \ln \frac{\frac{C_n}{1-\theta} + \gamma N}{\gamma N}.
    $$
    \item Once $f(C_n)$ and $f'(C_n)$ are computed, update the COST by the Newton's method:
    $$
    C_{n+1} = C_n - f(C_n) / f'(C_n).
    $$
    \item Repeat, from step 2, until the desired accuracy is achieved.
\end{enumerate}
\caption{Computation of the COST policy.}
\label{algo:COST}
\end{algorithm}

\begin{table}[!htb]
    \centering
    \begin{tabular}{|l||l|}
        \hline
        Infection rate & $\beta_1 = 0.2643 \quad \beta_2 = 0.0006 \quad \beta_3 = 0.0642$  \\
        \hline
        Testing specificity & $\theta_1 = 0.9415 \quad \theta_2 = 0.7993$  \\
        \hline
        Recovery rate & $\gamma = 0.0542$   \\
        \hline
        Removal rate & $\rho = 0.0499$ \\ 
        \hline
    \end{tabular}
    \caption{Estimated parameter values under Assumption~\ref{ass: simplification_of_X_T}.}
    \label{tab:estimated_parameters-COST}
\end{table}

\subsubsection{Evaluation of COST policy}
We first validate Assumption \ref{ass: simplification_of_X_T}. This assumption leads to a new set of model parameters given by Table~\ref{tab:estimated_parameters-COST}.
With these parameters, the model fitting is  illustrated in Figure~\ref{fig:validation-COST}.  
\begin{figure}[!htb]
    \centering
    \includegraphics[width=0.9\textwidth]{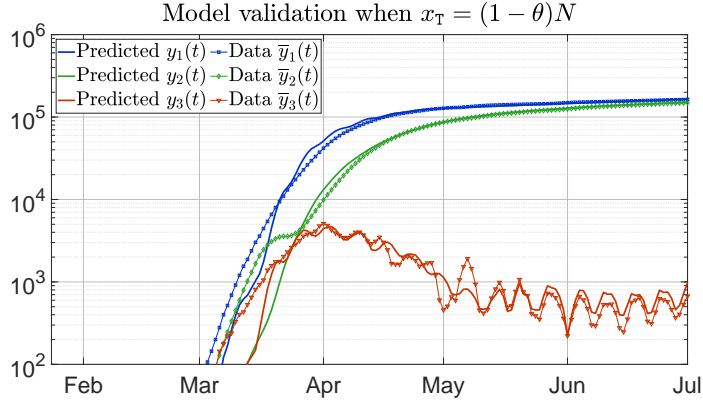}
    \caption{Model fitting under Assumption~\ref{ass: simplification_of_X_T} and parameter values of Table~\ref{tab:estimated_parameters-COST}.}
    \label{fig:validation-COST}
\end{figure}

Similar to BEST, we evaluate the COST through the prediction of the number of active ICU cases $B(t)$ and the cumulative number of deaths $E(t)$ using equations \eqref{eq:icu-infection} and \eqref{eq:death-infection}, respectively. However, we fit these equations to the data $\ol{B}(k)$ and $\ol{E}(k)$ by using the model parameters of Table~\ref{tab:estimated_parameters-COST} that are estimated under the Assumption~\ref{ass: simplification_of_X_T}.

\begin{figure}[!htb]
    \centering
    \includegraphics[width=0.9\textwidth]{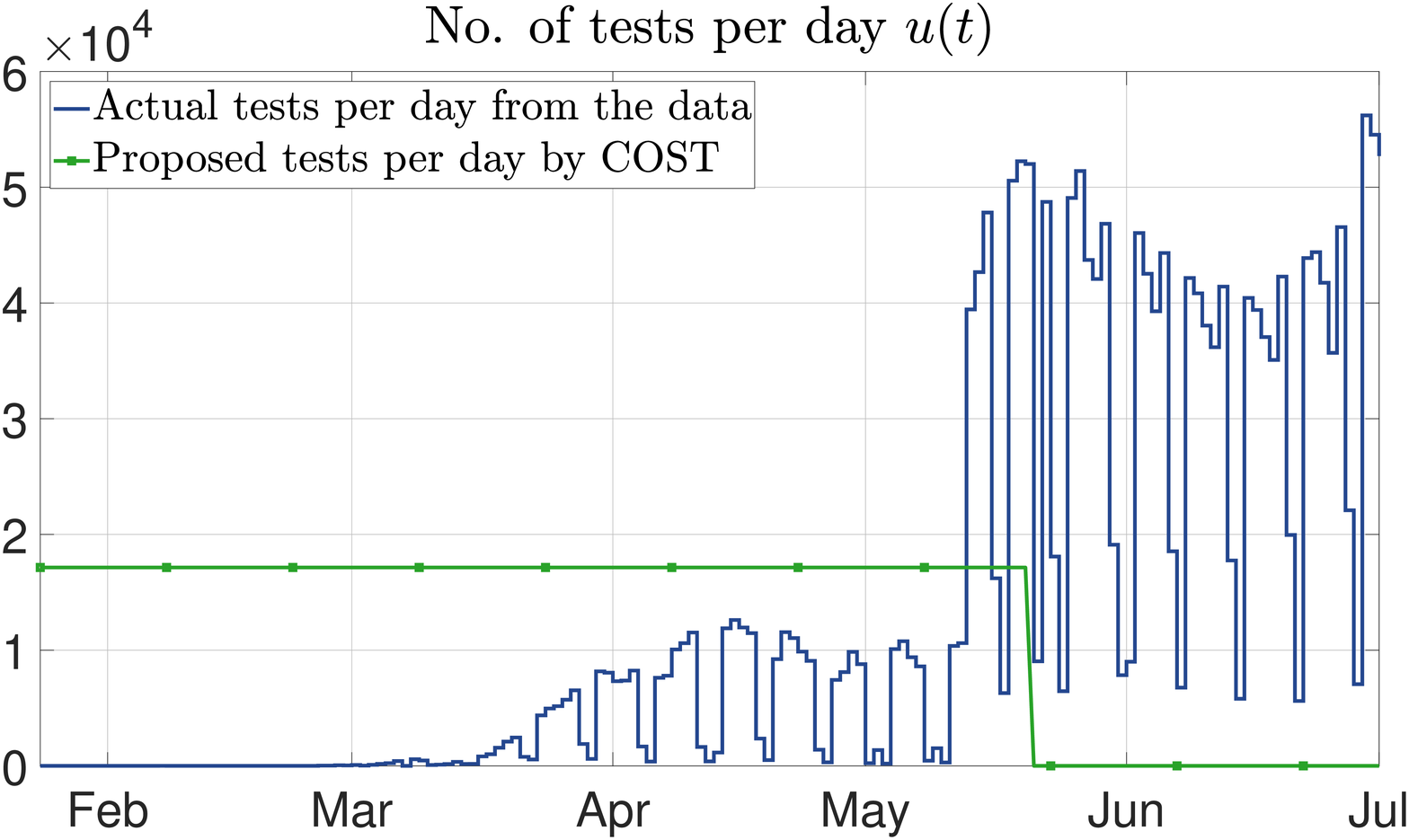}
    \caption{The comparison between the actual control input $u(t)$ versus the proposed control input $u(t)$ required by COST.}
    \label{fig:Input-COST}
\end{figure}

\begin{figure}[!htb]
    \centering
    \includegraphics[width=0.9\textwidth]{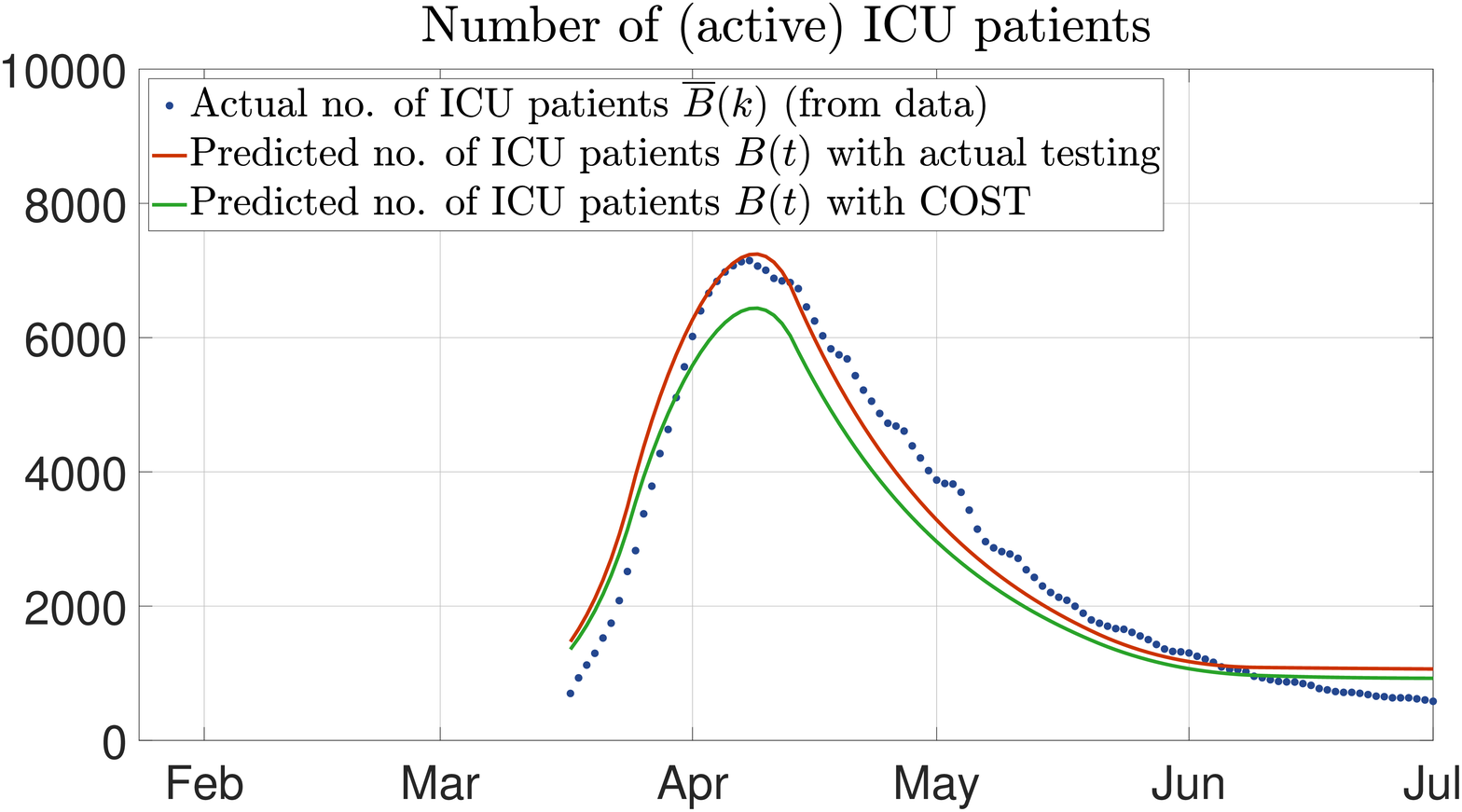}
    \caption{The comparison between the predicted number of active ICU cases $B(t)$ in the actual testing scenario and with COST policy.}
    \label{fig:ICU-COST}
\end{figure}
\begin{figure}[!htb]
    \centering
    \includegraphics[width=0.9\textwidth]{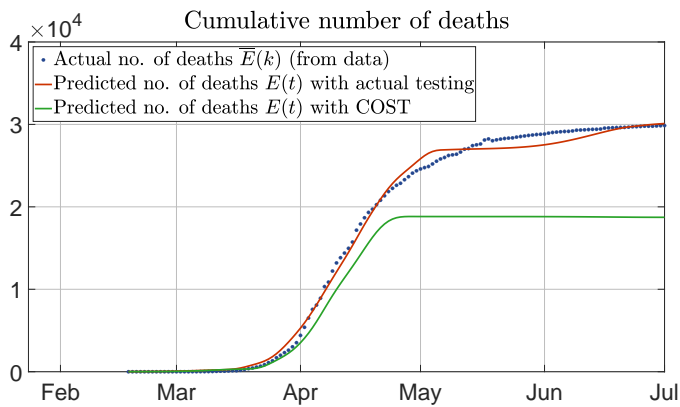}
    \caption{The comparison between the predicted cumulative number of deaths $E(t)$ in the actual testing scenario and with COST policy.}
    \label{fig:Death-COST}
\end{figure}

Assuming the total stockpile of tests $r_{\max}=2,038,037$, which is the total number of tests performed from January 24 to July 01, 2020, in France, we obtain the optimal number of tests to be performed per day as $C=17,144$ from Algorithm~\ref{algo:COST}, where the terminal time $T=118$ (i.e., May 20). This is illustrated in Figure~\ref{fig:Input-COST} along with the actual number of tests per day from the data.

Using the actual number of tests per day, we fit the model outputs \eqref{eq:icu-infection} and \eqref{eq:death-infection} with the data of the number of active ICU cases $\ol{B}(k)$ and the cumulative number of deaths $\ol{E}(k)$, where the parameter values are given in \ref{appendix_icu-death}, Table~\ref{tab:param_death-ICU_COST}. Then, we predict the number of active ICU cases $B(t)$ and the cumulative number of deaths when the COST is applied instead of actual number of tests, as illustrated in Figure~\ref{fig:ICU-COST} and Figure~\ref{fig:Death-COST}, respectively. We observe that COST reduces the peak of $B(t)$ by $11.12\%$ and the total number of cumulative deaths $E(t)$ by $37.52\%$. 

Notice that the total stockpile of tests in the actual testing and COST is the same. However, most of the tests in the actual testing are consumed after May 13, whereas all the tests in COST are allocated equally in the time interval January 24 to May 20. The number of tests performed per day in the actual testing exceeds $50,000$ per day after May 13, which is about three times more than what is required by COST, i.e., $17,144$. Thus, COST is optimal in a sense that it is practical, it decreases the burden on medical facilities, and it reduces the number of deaths significantly.

\section{Concluding remarks}

We proposed a SIDUR model for the control of the COVID-19 epidemic through the number of RT-PCT tests performed per day. Such tests enable the government to diagnose and isolate the infected people from the susceptible population. We estimated and validated the model on the French \mbox{COVID-19} data, and proposed two testing policies to control the epidemic: 1) best effort strategy for testing (BEST) and 2) constant optimal strategy for testing (COST). BEST provides the minimum number of tests to be performed from a certain day onward in order to make the increasing infected population non-increasing immediately. That is, it changes the course of epidemic from spreading to non-spreading. On the other hand, COST considers a limited stockpile of tests that are optimally allocated in a time interval starting from the beginning of the epidemic in order to minimize the peak of infected population.

    The control input in SIDUR model corresponds to the number of RT-PCR tests performed per day. However, another type of test, a serology test, which is not considered in the current model because of the unavailability of its data, is also very important. A serology test determines the relevant antibodies in a subject's serum in order to detect whether he/she was infected in the past. By performing serology tests on the testable population, one can detect the unidentified recovered population and transfer them in the identified removed compartment of the model. This reduces the size of the testable population, which in turn increases the testing specificity of RT-PCR tests. In other words, the serology tests complement the RT-PCR tests \cite{walque2020,winter2020}. Therefore, as a future prospect, it will be interesting to consider two control inputs corresponding to both types of test in the SIDUR model.
    
    The model is estimated and validated by fitting the model outputs with the available data of France. This allows us to predict the unmeasured states of the model. However, there is no certainty whether the predicted states correspond to the reality. Another prospect is to design an observer for the SIDUR model in order to estimate the true states of the model.
    
    Both BEST and COST policies are easy to compute and implement, however they are static. Thus, their influence on the control of epidemic is limited. In future, it will be interesting to solve a finite/infinite-horizon optimal control problem to minimize the peak and/or cumulative number of the infected population by a dynamic control input.

\section*{Acknowledgment}
This work is partially supported by European Research Council (ERC) under the European Union’s Horizon 2020 research and innovation programme (ERCAdG no. 694209, Scale-FreeBack, website: \url{http://scale-freeback.eu/}) and by Inria, France, in the framework of the COVID-19 fast-track research program.

\section*{Data and code availability}
The raw data used in this paper is available at \url{https://www.data.gouv.fr/fr/datasets/} with the following pointers: \href{https://www.data.gouv.fr/fr/datasets/chiffres-cles-concernant-lepidemie-de-covid19-en-france/}{Chiffres-cl\`{e}s concernant l'\'{e}pid\'{e}mie de COVID-19 en France}, \href{https://www.data.gouv.fr/fr/datasets/donnees-relatives-aux-tests-de-depistage-de-covid-19-realises-en-laboratoire-de-ville/}{Donn\'{e}es relatives aux tests de d\'{e}pistage de COVID-19 r\'{e}alis\'{e}s en laboratoire de ville}, and \href{https://www.data.gouv.fr/fr/datasets/donnees-relatives-aux-resultats-des-tests-virologiques-covid-19/}{Donn\'{e}es relatives aux r\'{e}sultats des tests virologiques COVID-19 (SI-DEP)}. The imputed data and codes are available at \url{http://scale-freeback.eu/}.


\appendix

\section{Particle swarm optimization} \label{appendix_PSO}
In this appendix, we briefly describe the particle swarm optimization (PSO) algorithm, \cite{kennedy1995}, which is considered to be one of the most powerful algorithms. It considers a foraging swarm of $n$ particles who collectively search for an optimal solution of \eqref{prob:param_est} in the parameter space. At time step $h=0,1,2,\dots$, each particle~$i$ visits a position $\hat{p}^i_h$ by moving with velocity $v^i_h$. Initially, when $h=0$, the positions $\hat{p}^i_0$, for all $i\in\{1,\dots,n\}$, are chosen randomly in the parameter space and the velocities $v^i_0=0$. Each particle~$i$ stores its personal best pair $(\hat{p}^{i*}_h,J^{i*}_h)$ and the social best pair $(s^{*}_h,J^{s*}_h)$ in memory, where $J^{i*}_h = \mc{J}(\hat{p}^{i*}_h)$ and $J^{s*}_h = \mc{J}(s^{*}_h)$ are the costs \eqref{eq:cost_fun} of personal best position $\hat{p}^{i*}_h$ and social best position $s^*_h=\arg\min_{\hat{p}^{i*}_h, i\in\{1,\dots,n\}} \mc{J}(\hat{p}^{i*}_h)$, respectively. Notice that $J^{s*}_h\leq J^{i*}_h$ for all $i\in\{1,\dots,n\}$. The personal best pair of a particle corresponds to the best position in the parameter space it has visited so far. The social best pair, on the other hand, corresponds to the best position in the parameter space that anyone in the swarm has visited so far. 

At every time step, each particle updates its velocity, position, its personal best pair, and the social best pair. The velocity and position are updated as follows:
\be \label{PSO_eq}
\ba{ccl}
v^i_{h+1} &=& w v^i_h + c_1 r_{h,1} (\hat{p}^{i*}_h - \hat{p}^{i}_h)  + c_2 r_{h,2} (s^*_h - \hat{p}^{i}_h ) \\ [0.5em]
\hat{p}^i_{h+1} &=& \hat{p}^i_h + v^i_{h+1}
\ea
\ee
where $w$ is the inertia weight, $c_1,c_2$ are the acceleration coefficients, and $r_{h,1},r_{h,2}$ are uniformly distributed random numbers in $[0,1]$ generated at each time step $h$. There are many ways of choosing these parameters \cite{clerc2002,poli2007,zhan2009}.

Each particle~$i$ computes the cost $J^i_{h+1}=\mc{J}(\hat{p}^i_{h+1})$ at its current position and updates its personal best pair as
\be \label{PSO_personal}
(\hat{p}^{i*}_{h+1},J^{i*}_{h+1}) = \left\{\ba{ll}
(\hat{p}^i_{h+1},J^i_{h+1}), & \text{if } J^i_{h+1} \leq J^{i*}_h \\ [0.5em]
(\hat{p}^{i*}_h,J^{i*}_h), & \text{otherwise}.
\ea\right.
\ee
Each particle~$i$ then communicates its personal best pair with all the other particles and each of them finds the social best pair for time $h+1$ as
\[
(s_{h+1},J^s_{h+1}) = (\hat{p}^{b}_{h+1},J^{b}_{h+1})
\]
where $b = \arg\min_{j\in\{1,\dots,n\}} J^j_{h+1}$. Finally, the social best pair is updated as
\be \label{PSO_social}
(s^*_{h+1},J^{s*}_{h+1}) = \left\{\ba{ll}
(s_{h+1},J^s_{h+1}), & \text{if } J^s_{h+1} \leq J^{s*}_h \\ [0.5em]
(s^{*}_h,J^{s*}_h), & \text{otherwise}.
\ea\right.
\ee

\section{Decreasing property of the testable population}\label{sec_lemma}
\begin{Lemma}\label{lemma_x_T_decreasing}
The testable population $x_\T$ decreases on any interval  on which $x_\I$ is decreasing and $\theta$ is non-decreasing.
\end{Lemma}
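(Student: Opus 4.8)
The plan is to work from the second expression for the testable population in \eqref{eq:testable}, namely $x_\T = \theta x_\I + (1-\theta)(N - x_\D - x_\R)$, which isolates the two monotone quantities $x_\I$ and $N - x_\D - x_\R$ and relegates all dependence on the testing specificity to the scalar weights $\theta$ and $1-\theta$. Since $\theta$ is only piecewise constant, I would treat the interval in two parts: the open subintervals on which $\theta$ is constant (where $x_\T$ is differentiable and I can argue via its derivative), and the discrete instants at which $\theta$ jumps upward (where I must control the induced jump in $x_\T$ directly).

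On a subinterval where $\theta$ is constant, I would differentiate and use the model \eqref{eq401} to write $\dot x_\T = \theta \dot x_\I - (1-\theta)(\dot x_\D + \dot x_\R)$. The second factor is controlled by noting from \eqref{eq401c} and \eqref{eq401e} that $\dot x_\D + \dot x_\R = u\, x_\I / x_\T \ge 0$, so that $N - x_\D - x_\R$ is non-increasing. Then, because $\theta$ and $1-\theta$ are both nonnegative (as $\theta \in [0,1]$), and because $\dot x_\I \le 0$ on the interval by hypothesis, each of the two terms in $\dot x_\T$ is nonpositive, giving $\dot x_\T \le 0$.

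At an instant $t$ where $\theta$ jumps from $\theta^-$ to $\theta^+ > \theta^-$, the states $x_\I, x_\D, x_\R$ are continuous while the weight changes, so the induced jump is $x_\T(t^+) - x_\T(t^-) = (\theta^+ - \theta^-)\big(x_\I(t) - (N - x_\D(t) - x_\R(t))\big)$. Here I would invoke Assumption~\ref{assump:constant_pop} to rewrite $N - x_\D - x_\R = x_\S + x_\I + x_\U$, whence $x_\I - (N - x_\D - x_\R) = -(x_\S + x_\U) \le 0$; combined with $\theta^+ - \theta^- > 0$ this makes the jump nonpositive. Patching the two cases together shows $x_\T$ is non-increasing on the whole interval (and strictly decreasing wherever $x_\I$ strictly decreases with $\theta>0$, or across any genuine upward jump of $\theta$).

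I expect the only real subtlety to be the bookkeeping at the jumps of $\theta$: a naive derivative computation would silently ignore these upward jumps, and one must check separately that they push $x_\T$ down rather than up. The algebraic identity $N - x_\D - x_\R = x_\S + x_\I + x_\U$ from Assumption~\ref{assump:constant_pop} is exactly what makes this work, since it certifies $x_\I \le N - x_\D - x_\R$, i.e.\ the weight shift at a jump transfers mass from the larger quantity to the smaller one.
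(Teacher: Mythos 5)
Your proof is correct, but it is organized differently from the paper's. Both arguments rest on the same two facts: $N - x_\D - x_\R$ is non-increasing because $\dot x_\D + \dot x_\R = u\,x_\I/x_\T \ge 0$ by \eqref{eq401c} and \eqref{eq401e}, and raising $\theta$ can only lower $x_\T$ because $\theta$ multiplies the nonpositive quantity $x_\I - (N - x_\D - x_\R) = -(x_\S + x_\U)$. The paper, however, exploits these in a single two-point comparison: rewriting \eqref{eq:testable} as $x_\T = \theta\,(x_\I + x_\D + x_\R - N) + (N - x_\D - x_\R)$, it first replaces $\theta(t)$ by $\theta(t') \ge \theta(t)$ (the coefficient of $\theta$ is negative), then bounds $x_\I(t)$ and $N - x_\D(t) - x_\R(t)$ from below by their values at $t'$. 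This needs no regularity of $\theta$ at all, so it proves the lemma for an arbitrary non-decreasing $\theta$, whereas your derivative-plus-jumps decomposition presupposes the piecewise-constant structure of $\theta$ (legitimate in this paper's model, but an extra hypothesis relative to the lemma statement; a continuously increasing $\theta$ would require adding the term $\dot\theta\,\bigl(x_\I - (N - x_\D - x_\R)\bigr) \le 0$ to your derivative computation). A second nuance: the paper sharpens the conclusion to a strict decrease by noting that $\theta(t')$ and $1-\theta(t')$ cannot both vanish, while your patching yields non-increase with strictness only under side conditions; for the use of the lemma inside Proposition~\ref{prop:BEST} this difference is harmless, since the strict decrease of $c^*$ there already follows from the strict decrease of $x_\S$. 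What your route buys is dynamical transparency: it makes explicit that between jumps the states drift $x_\T$ downward, and that the only possible danger is at the upward jumps of $\theta$ — exactly the point your jump computation settles.
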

\begin{proof} Let us consider an interval $(t,t')$, $t<t'$ on which $x_\I$
is decreasing while $\theta$ is non-decreasing. First, one can note that 
\begin{eqnarray}
x_\T(t)
& = &
\nonumber
\theta(t) (x_\I(t)+x_\D(t)+x_\R(t)-N) + (N- x_\D(t) - x_\R(t))\\
& \geq &
\nonumber
\theta(t') (x_\I(t)+x_\D(t)+x_\R(t)-N) + (N- x_\D(t) - x_\R(t))\\
& = &
\label{eq124}
\theta(t') x_\I(t) + (1-\theta(t')) (N- x_\D(t) - x_\R(t)),
\end{eqnarray}
because $x_\I(t)+x_\D(t)+x_\R(t)-N<0$ and $\theta$ is non-negative and non-decreasing by assumption.
Since $x_\I$ is supposed to be decreasing, then
$$
\theta(t') x_\I(t) \geq \theta(t') x_\I(t').
$$
On the other hand
$$
\dot N-\dot x_\D-\dot x_\R = -u\frac{x_\I}{x_\T} <0,
$$
meaning that $N- x_\D - x_\R$ is a decreasing function.
As $1-\theta(t')\geq 0$, one gets
$$
(1-\theta(t')) (N- x_\D(t) - x_\R(t))
\geq (1-\theta(t')) (N- x_\D(t') - x_\R(t')).
$$
Adding the two inequalities, one deduces from \eqref{eq124} that $x_\T(t) \geq x_\T(t')$ whenever $x_\I$ is decreasing on $(t,t')$.
A tighter examination shows that, as both expressions $\theta(t')$ and $1-\theta(t')$ cannot be zero together, at least one of the two terms of the sum indeed decreases between $t$ and $t'$.
Therefore, $x_\T(t)>x_\T(t')$.
\end{proof}

\section{Parameter values for curve fitting of the number of ICU cases and deaths} \label{appendix_icu-death}

\begin{table}[h!]
    \centering
    \begin{tabular}{|l||l|l|}
        \hline
        Parameters of $B(t)$ &  $b_1=-0.54\times 10^4$ & $b_2=1.25\times 10^4$ \\
        \hline
        \hline
        \multirow{5}{*}{Parameters of $E(t)$} 
        & $e_1=4.14\times 10^4$ & $e_2=7.92\times 10^5$ \\
        & $e_3=-1.27\times 10^7$ & $e_4=9.04\times 10^7$ \\
        & $e_5=-3.63\times 10^8$ & $e_6=8.81\times 10^8$ \\
        & $e_7=-1.32\times 10^9$ & $e_8=1.19\times 10^9$ \\
        & $e_9=-5.93\times 10^8$ & $e_{10}=1.25\times 10^8$ \\
        \hline
    \end{tabular}
    \caption{Estimated parameters $b_1$ and $b_2$ in \eqref{eq:icu-infection} and $e_i$, for $i=1,\dots,10$, in \eqref{eq:death-infection} when the testable population $x_\T$ is given by \eqref{eq:testable}.}
    \label{tab:param_death-ICU}
\end{table}

\begin{table}[h!]
    \centering
    \begin{tabular}{|l||l|l|}
        \hline
        Parameters of $B(t)$ &  $b_1=8.87\times 10^{-4}$ & $b_2=1.62$ \\
        \hline
        \hline
        \multirow{5}{*}{Parameters of $E(t)$} 
        & $e_1=4.03\times 10^4$ & $e_2=-1.62\times 10^6$ \\
        & $e_3=2.76\times 10^7$ & $e_4=-2.16\times 10^8$ \\
        & $e_5=9.27\times 10^8$ & $e_6=-2.34\times 10^9$ \\
        & $e_7=3.59\times 10^9$ & $e_8=-3.28\times 10^9$ \\
        & $e_9=1.63\times 10^9$ & $e_{10}=-3.44\times 10^8$ \\
        \hline
    \end{tabular}
    \caption{Estimated parameters $b_1$ and $b_2$ in \eqref{eq:icu-infection} and $e_i$, for $i=1,\dots,10$, in \eqref{eq:death-infection} when the testable population $x_\T$ is given by \eqref{eq:testable_approx}.}
    \label{tab:param_death-ICU_COST}
\end{table}


\end{document}